\documentclass[12pt]{article}

\usepackage{amsmath,amssymb,amsthm,mathtools,bm}
\usepackage{booktabs,multirow,enumitem,float}
\usepackage{ascmac}
\usepackage{xcolor,mathrsfs}
\usepackage{geometry}
\usepackage[expansion=false]{microtype}
\usepackage{etoolbox}
\usepackage{hyperref}
\makeatletter
\let\manuscript@cite\@cite
\renewcommand{\@cite}[2]{{\upshape\manuscript@cite{#1}{#2}}}
\patchcmd{\@maketitle}{\null}{%
  \rightline{\baselineskip16pt\normalfont\vbox to36pt{%
    \hbox{\shortstack[r]{OMU-Phys-633\\AP- GR- 216}}\vss}}%
}{}{\PackageError{manuscript}{Could not insert preprint number}{}}
\makeatother

\hypersetup{
  pdftitle={Three-dimensional noncompact trans-Sasakian eta-Einstein manifolds},
  pdfauthor={Satsuki Matsuno},
  pdfkeywords={trans-Sasakian manifold, eta-Einstein metric, noncompact manifold, Newman-Penrose formalism, Reeb flow, moduli, Riemannian quotient},
  colorlinks=true,
  linkcolor=blue,
  urlcolor=blue
}

\allowdisplaybreaks
\newcommand{\dd}{\mathop{}\!\mathrm{d}}
\newcommand{\D}{\mathbb D}
\newcommand{\C}{\mathbb C}
\newcommand{\Q}{\mathbb Q}
\newcommand{\R}{\mathbb R}
\newcommand{\Z}{\mathbb Z}
\newcommand{\T}{\mathbb T}

\newcommand{\Ric}{\operatorname{Ric}}
\newcommand{\Aut}{\operatorname{HolAut}}

\newcommand{\StrAut}{\operatorname{StrAut}}
\newcommand{\Lie}{\mathcal L}
\newcommand{\e}{\mathrm e}
\newcommand{\ii}{\mathrm i}
\newcommand{\id}{\operatorname{id}}
\newcommand{\Isom}{\operatorname{Isom}}

\renewcommand{\Im}{\operatorname{Im}}
\renewcommand{\Re}{\operatorname{Re}}
\DeclareMathOperator{\Log}{Log}

\theoremstyle{plain}
\newtheorem{theorem}{Theorem}[section]
\newtheorem{proposition}[theorem]{Proposition}
\newtheorem{lemma}[theorem]{Lemma}
\newtheorem{corollary}[theorem]{Corollary}
\theoremstyle{definition}
\newtheorem{definition}[theorem]{Definition}

\theoremstyle{remark}
\newtheorem{remark}[theorem]{Remark}

\numberwithin{equation}{section}

\title{Three-dimensional noncompact trans-Sasakian $\eta$-Einstein manifolds}
\author{
Satsuki Matsuno\thanks{Department of Physics, Graduate School of Science, Osaka Metropolitan University.
E-mail: Satsuki\_Matsuno@omu.ac.jp.
ORCID: 0000-0003-0841-8033.}
}
\date{}

\begin{document}
\maketitle

\begin{abstract}
We classify connected, simply connected, noncompact three-dimensional trans-Sasakian $\eta$-Einstein manifolds.
In \cite{Matsuno2025}, we showed that, in the compact case, such a manifold is either of $\beta$-Kenmotsu type or homothetic to a Sasakian manifold.
In this paper, in the noncompact case, we find families of trans-Sasakian $\eta$-Einstein manifolds that are neither $\beta$-Kenmotsu nor homothetic to Sasakian manifolds.
These families have infinitely many degrees of freedom.
We give these nontrivial metrics explicitly.
We also give a structure theorem for the case in which the characteristic vector field is Palais-nonregular.
\end{abstract}

\noindent\textbf{Keywords.}
Trans-Sasakian manifold; $\eta$-Einstein metric; noncompact manifold; Newman--Penrose formalism; Palais-nonregular.

\medskip
\noindent\textbf{2020 Mathematics Subject Classification.}
Primary 53C25; Secondary 53C21, 57R30.

\tableofcontents

\section{Introduction}
A trans-Sasakian manifold is a generalization that includes Sasakian manifolds and Kenmotsu manifolds \cite{Oubina}.
They are interesting objects and have been studied extensively \cite{Marrero,PrasadPankajTripathiShukla,AlSolamyKimTripathi}.
Normal almost contact metric (ACM) manifolds are also interesting objects.
The basic properties of three-dimensional normal ACM manifolds were studied by Olszak \cite{Olszak}.
Moreover, in dimension three, a trans-Sasakian manifold is equivalent to a normal ACM manifold \cite{Olszak,AlSolamyKimTripathi}.
Therefore, in dimension three, these two interesting classes coincide and have been studied extensively \cite{PrasadPankajTripathiShukla,AlSolamyKimTripathi}.

A fundamental problem for ACM manifolds is to determine which of them are $\eta$-Einstein.
Olszak computed the Ricci tensor of three-dimensional normal ACM manifolds and derived conditions for them to be $\eta$-Einstein \cite{Olszak}\footnote{Strictly speaking, Olszak used the term $\eta$-Einstein in a sense different from its present definition. However, because he computed the Ricci tensor, the present $\eta$-Einstein condition follows easily as a corollary of his theorem.}.
However, until recently, the question of which three-dimensional normal ACM or three-dimensional trans-Sasakian manifolds are $\eta$-Einstein had been answered only partially.
Here, ``partially'' means that the results were obtained only under the stronger assumption that the structure is $\alpha$-Sasakian or $\beta$-Kenmotsu.

The author recently studied compact three-dimensional trans-Sasakian $\eta$-Einstein manifolds \cite{Matsuno2025}.
Since compactness allows the use of integral techniques, a certain rigidity theorem was obtained from an a priori estimate for the $\eta$-Einstein condition.
As a result, such manifolds were shown to reduce to the $\alpha$-Sasakian or $\beta$-Kenmotsu cases.
In the noncompact case, however, integral techniques cannot be used, so a completely different approach is required, and the problem remained unresolved.

In this paper, we classify connected simply connected noncompact three-dimensional trans-Sasakian $\eta$-Einstein manifolds.
Unlike in the compact case, we find that trans-Sasakian $\eta$-Einstein manifolds that are neither $\beta$-Kenmotsu nor $\alpha$-Sasakian exist with infinitely many degrees of freedom.
We give these nontrivial metrics explicitly.
We also give a structure theorem for the case in which the characteristic vector field is Palais-nonregular \cite{Palais}.

\subsection{Summary of the results}
\label{subsec:summary-results}
Let $(M,\varphi,\xi,\eta,g)$ be a connected, noncompact, three-dimensional trans-Sasakian manifold of type $(\alpha,\beta)$, defined by
\begin{align*}
(\nabla_X\varphi)Y
=
\alpha\{g(X,Y)\xi-\eta(Y)X\}
+\beta\{g(\varphi X,Y)\xi-\eta(Y)\varphi X\}.
\end{align*}
We further assume that it is $\eta$-Einstein and that $\xi$ is complete.
If $\alpha\equiv0$, that is rank one, the universal cover is a warped product
\[
 \widetilde M=\R\times B,\qquad
 \tilde g=\dd t^2+e^{2\Omega(t)}h,
\]
and if $\beta\equiv0$ and $\alpha\not\equiv0$, that is $\alpha$-Sasakian, the metric is homothetic to a Sasakian metric (Proposition~\ref{prop:eta-einstein simple case}).

Therefore, we are mainly interested in a structure such that $\alpha\beta\not\equiv0$.
We call this structure \textbf{proper trans-Sasakian}.
The author showed that no compact three-dimensional proper trans-Sasakian $\eta$-Einstein manifolds exist \cite{Matsuno2025}.
In the non-compact case, the universal cover is isomorphic to one of the following three model spaces (Theorem~\ref{prop:Standard eta-Einstein metrics}).

Let $K\in\R$ be a constant, and let $\D=\{z\in\C:|z|<1\}$ be the unit disk with its standard complex structure.
Let $z=x+\ii y$ and $\Delta_0=\partial_x^2+\partial_y^2$.
An \textbf{admissible pair} for $K>0$ is a pair
$(v,s)\in C^\infty(\D,\R)\times\mathcal O(\D)$ satisfying
\begin{equation}
 \Delta_0v>0,\qquad e^v|s|<1,\qquad s\not\equiv0.
\end{equation}
An \textbf{admissible potential} for $K=0$ is a function
$U\in C^\infty(\D,\R)$ satisfying
\begin{align}
 U>0,\qquad \Delta_0U<0.
\end{align}
An \textbf{admissible potential} for $K<0$ is a function
$U\in C^\infty(\D,\R)$ satisfying
\begin{align}
 U\not\equiv0,\qquad |U|<\frac{\pi}{\sqrt{2|K|}},
\end{align}
and for which there exists a function $H_U\in C^\infty(\D,(0,\infty))$ satisfying
\begin{align}
 \Delta_0U=\sqrt{2|K|}\,H_U\sin(\sqrt{2|K|}U).
\end{align}
We denote the respective sets of data by $\mathcal U_K^{\mathrm{pair}}$ for $K>0$
and $\mathcal U_K^{\mathrm{pot}}$ for $K\leq0$.

For each fixed $K\in\R$, take an admissible pair $(v,s)$ if $K>0$,
or an admissible potential $U$ if $K\leq0$, on the unit disk $\D$.
On $\D_z\times\R_w$, we define the one-form $\eta_K$ as
\begin{equation}
 \eta_K=
 \begin{cases}
  \dd w-(2K)^{-1/2}*_0\dd v,&K>0,\\
  \dd w+*_0\dd U,&K=0,\\
  \dd w-*_0\dd U,&K<0.
 \end{cases}
\end{equation}
On $\D_z\times\R_w$, we define the metrics as follows.
If $K>0$,
\begin{equation}
 \begin{aligned}
 &g_K=A_{K,v,s}|\dd z|^2+\eta_K^2,\\
 &A_{K,v,s}=\frac{\Delta_0v}{2K}\frac{|1-q|^2}{1-|q|^2},
 \qquad q=e^{v-\ii\sqrt{2K}w}\overline{s}.
 \end{aligned}
\end{equation}
If $K=0$,
\begin{equation}
\begin{aligned}
&g_0=A_{0,U}|\dd z|^2+\eta_0^2,\\
&A_{0,U}=-\frac{\Delta_0U}{2U}(w^2+U^2),
\end{aligned}
\end{equation}
If $K<0$,
\begin{equation}
 \begin{aligned}
 &g_K=A_{K,U}|\dd z|^2+\eta_K^2,\\
 &A_{K,U}=H_U\{\cosh(\sqrt{2|K|}w)+\cos(\sqrt{2|K|}U)\}.
 \end{aligned}
\end{equation}
Set $\xi=\partial_w$ and let $\varphi_K$ be the horizontal lift of the standard complex structure on $\D$, with $\varphi_K\xi=0$.
We denote these ACM models by $\mathcal M_K(v,s)$ for $K>0$ and $\mathcal M_K(U)$ for $K\leq0$.

Appendix~\ref{app:Geodesically complete model spaces} gives a completeness criterion.
Explicit families of complete model spaces with infinitely many degrees of freedom are also given.

Therefore, a connected noncompact three-dimensional proper trans-Sasakian $\eta$-Einstein manifold with complete $\xi$ is a quotient of $\mathcal M_K(v,s)$ or $\mathcal M_K(U)$ by a subgroup of the corresponding structure-preserving automorphism group that acts freely and properly discontinuously (Theorem~\ref{thm:global-quotient-model}).
The structure-preserving automorphism group $\StrAut(\mathcal M_K(v,s))$ or $\StrAut(\mathcal M_K(U))$ is given as follows (Propositions~\ref{prop:generalized-rigidity} and~\ref{prop:nonpositive-isomorphisms}).

For $K>0$, let $\mathfrak u=v+\log|s|$ and
$G_{\mathfrak u}=\{\psi\in\Aut(\D):\mathfrak u\circ\psi=\mathfrak u\}$, where $\Aut(\D)$ is the group of holomorphic diffeomorphisms from $\D$ onto itself.
Then $\StrAut(\mathcal M_K(v,s))=\{(\psi,L):\psi\in G_{\mathfrak u},\ L\in\mathcal O(\D),\ e^L=s/(s\circ\psi)\}$.
For $K\leq0$, $\StrAut(\mathcal M_K(U))=\{\psi\in\Aut(\D):U\circ\psi=U\}$.

If $\xi$ is Palais-nonregular, then $K>0$ and the structure has rank three (Theorem~\ref{thm:orbit-trichotomy}).
Therefore, in the Palais-nonregular case, the proper trans-Sasakian manifold is given by $M=\mathcal M_K(v,s)/\Gamma$, where $\Gamma\subset \StrAut(\mathcal M_K(v,s))$ is a Deck group.
Furthermore, there are three nonregular types \textup{(SF)}, \textup{(IR$_0$)}, and \textup{(IR$_1$)} as follows (Theorem~\ref{thm:orbit-trichotomy}).

Type \textup{(SF)} has an effective circle action which is locally free but not free.
Its orbits define a Seifert fibration over a noncompact oriented $2$-orbifold with at least one exceptional fiber.
The Deck groups are described in Theorem~\ref{thm:SF-deck-classification}.

Type \textup{(IR$_0$)} has no periodic characteristic orbit, and every orbit is dense in a principal two-torus. In this case $M\cong\T^2\times\R$.
The Deck groups are described in Theorem~\ref{thm:IR0-deck-classification}.

Type \textup{(IR$_1$)} has exactly one periodic characteristic orbit, and every other orbit is dense in a principal two-torus.
In this case $M\cong S^1\times\R^2$.
The Deck groups are described in Theorem~\ref{thm:IR1-deck-classification}.

\section{Preliminaries}
\label{sec:preliminaries}
\subsection{Three-dimensional Riemannian Newman--Penrose formalism}\label{sec:riemannian NP formalism}
In this section, we summarize the Newman-Penrose formalism on three-dimensional Riemannian manifolds.
The material in this section is based on \cite{Matsuno2025}.
Let $(M,g)$ be an orientable three-dimensional Riemannian manifold.
We can choose a nowhere-vanishing unit vector field $\xi$ and a suitable orthonormal frame $\{\xi,e_2,e_3\}$ at least locally.
Setting $\partial=\frac{1}{\sqrt{2}}(e_2-ie_3)$, we obtain a local complex frame $\{\xi,\partial,\bar{\partial}\}$.
In this setting, the metric satisfies $g(\xi,\xi)=g(\partial,\bar{\partial})=1$ and $g(\xi,\partial)=g(\xi,\bar{\partial})=g(\partial,\partial)=g(\bar{\partial},\bar{\partial})=0$.
Letting $\{\eta,\theta^2,\theta^3\}$ be the dual frame of $\{\xi,e_2,e_3\}$ and setting $\mu=\frac{1}{\sqrt{2}}(\theta^2+i\theta^3)$, we see that the complex dual frame corresponding to the complex frame $\{\xi,\partial,\bar{\partial}\}$ is $\{\eta,\mu,\bar{\mu}\}$ by noting that $\mu(\partial)=1$.
The \textcolor{blue}{\textbf{spin coefficients}} with respect to the frame $\{\xi,\partial,\bar{\partial}\}$ are defined as follows.
\begin{align*}
\kappa=-g(\nabla_\xi\xi,\partial),\ \rho=g(\nabla_{\bar{\partial}}\xi,\partial),\ \sigma=-g(\nabla_\partial\xi,\partial),\\
\epsilon=g(\nabla_\xi\partial,\bar{\partial}),\ \beta=g(\nabla_\partial\partial,\bar{\partial}).
\end{align*}
These are equivalently defined as the coefficients of the following covariant derivatives or Lie brackets.
\begin{align}
&\nabla_\xi \xi = -\bar{\kappa}\partial - \kappa\bar{\partial},\ 
\nabla_\partial \xi = \bar{\rho}\partial - \sigma\bar{\partial},\ 
\nabla_\xi \partial = \kappa\xi + \epsilon\partial,\\
&\nabla_\partial\partial = \sigma\xi + \beta\partial,\ 
\nabla_{\partial} \bar{\partial} = -\bar{\rho}\xi - \beta\bar{\partial},\ \\
&[\xi,\partial] = \kappa\xi + (\epsilon - \bar{\rho})\partial + \sigma\bar{\partial},\\ 
&[\partial,\bar{\partial}] = (\rho - \bar{\rho})\xi + \bar{\beta}\partial - \beta\bar{\partial}.\label{eq:Lie bra 2}
\end{align}

If $\kappa = 0$, the integral curves of $\xi$ are geodesics.
We write $\rho = \Theta + i\omega$.
The real part $\Theta$ is called the expansion of $\xi$, and the imaginary part $\omega$ is called the twist of $\xi$.
The coefficient $\sigma$ is called the complex shear of $\xi$.
The quantities $\Theta$, $\omega$, and $\sigma$ admit the following geometric interpretations.
For the congruence generated by $\xi$, $\Theta$ represents the rate of change of the area of a cross-section orthogonal to $\xi$, $\omega$ represents the angular velocity of the cross-section, and $\sigma$ represents the shear deformation of the cross-section.

The generalized Sachs equations describe the relation between the Ricci tensor and the spin coefficients.
The equations used in this paper are as follows.
\begin{align}
&-\xi(\rho) - \bar{\partial}(\kappa)
    = |\kappa|^{2} + |\sigma|^{2} + \rho^{2}
      + \kappa \bar{\beta} + \tfrac12 \mathrm{Ric}(\xi,\xi),\label{eq:generalized Sachs eq 1}\\
&\xi(\sigma) - \partial(\kappa)
    = \kappa^{2} + 2\sigma\epsilon
      - \sigma(\rho + \bar{\rho})
      - \kappa\beta + \mathrm{Ric}(\partial,\partial),\label{eq:generalized Sachs eq 2}\\
&-\partial(\rho) - \bar{\partial}(\sigma)
    = 2\sigma\bar{\beta} + (\rho - \bar{\rho})\kappa
      + \mathrm{Ric}(\xi,\partial),\label{eq:generalized Sachs eq 3}\\
&\xi(\beta) - \partial(\epsilon)
    = \sigma(\bar{\kappa} - \bar{\beta})
      + \kappa(\epsilon + \bar{\rho})
      + \beta(\epsilon - \bar{\rho})
      - \mathrm{Ric}(\xi,\partial),\label{eq:generalized Sachs eq 4}\\
&\partial(\bar{\beta}) + \bar{\partial}(\beta)
    = |\sigma|^{2} - |\rho|^{2} - 2|\beta|^{2}
      - (\rho - \bar{\rho})\epsilon
      - \mathrm{Ric}(\partial,\bar{\partial})
      + \tfrac12 \mathrm{Ric}(\xi,\xi).\label{eq:generalized Sachs eq 5}
\end{align}
The components of the second Bianchi identity used in this paper are
\begin{align}
&\xi(\operatorname{Ric}(\xi,\partial))
-\frac12\partial(\operatorname{Ric}(\xi,\xi))
+\bar{\partial}(\operatorname{Ric}(\partial,\partial)) \label{eq:2nd bianchi 1}\\
&=
\kappa\operatorname{Ric}(\xi,\xi)
+(\epsilon-2\rho-\bar{\rho})\operatorname{Ric}(\xi,\partial)
+\sigma\operatorname{Ric}(\xi,\bar{\partial})\nonumber \\
&\quad
-\bigl(\bar{\kappa}+2\bar{\beta}\bigr)
\operatorname{Ric}(\partial,\partial)
-\kappa\operatorname{Ric}(\partial,\bar{\partial}).\nonumber\\
&\partial(\text{Ric}(\xi, \overline{\partial})) + \overline{\partial}(\text{Ric}(\xi, \partial)) - \xi(\text{Ric}(\partial, \overline{\partial})) + \frac{1}{2}\xi(\text{Ric}(\xi, \xi))  \label{eq:2nd bianchi 2}\\
&=-(\rho + \overline{\rho}) (\text{Ric}(\xi, \xi) - \text{Ric}(\partial, \bar{\partial})) \notag\\
&- \overline{\sigma}\text{Ric}(\partial, \partial) - \sigma\text{Ric}(\overline{\partial}, \overline{\partial}) - (2\overline{\kappa} + \overline{\beta})\text{Ric}(\xi, \partial) - (2\kappa + \beta)\text{Ric}(\xi, \overline{\partial}).\notag
\end{align}

Even when $\xi$ is globally defined, the complex frame $\{\xi,\partial,\bar{\partial}\}$ may exist only locally.
For example, on $M=S^2\times\R$, if $\xi$ is tangent to the $\R$-factor, then $\partial$ cannot be chosen globally to be tangent to the $S^2$-factor.
In such a case, we choose a new $\partial$ on each overlapping neighborhood.
Under a change of frame $\partial'=e^{i\theta}\partial$, the spin coefficients transform as
\begin{align}
&\kappa'=e^{i\theta}\kappa,\ \rho'=\rho,\ \sigma'=e^{2i\theta}\sigma,\\
&\epsilon'=\epsilon+i\xi(\theta),\ \beta'=e^{i\theta}(\beta+i\partial\theta)
\end{align}
In particular, $\rho$ depends only on the choice of $\xi$.
Although the spin coefficients other than $\rho$ change, one can verify that the generalized Sachs equations and the second Bianchi identities are well defined under this gauge transformation \cite{Matsuno2025}.
Therefore, $\rho$, the generalized Sachs equations, and the second Bianchi identities are well defined even when $\partial$ cannot be chosen globally.

\subsection{Some properties of three-dimensional  trans-Sasakian manifolds}\label{sec:some properties of 3D trans-Sasakian}
In this subsection, we describe almost contact metric structures in the Newman--Penrose formalism and then recall basic facts about three-dimensional trans-Sasakian manifolds.
Let $(M,\varphi,\xi,\eta,g)$ be a three-dimensional ACM manifold.
Here $\xi$ is the characteristic vector field, $\eta$ is the metric dual of $\xi$, and $\varphi\colon\Gamma(TM)\to\Gamma(TM)$ is an endomorphism field satisfying
\begin{align*}
&\varphi^2=-I+\eta\otimes\xi,\ \eta(\xi)=1,\ \varphi(\xi)=0,\\   
&g(\varphi(X),\varphi(Y))=g(X,Y)-\eta(X)\eta(Y),\ X,Y\in\Gamma(TM).
\end{align*}
Let $\{U_\alpha\}_{\alpha\in\Lambda}$ be an open cover of $M$, and let $\{\partial^{(\alpha)},\bar{\partial}^{(\alpha)}\}$ be a complex frame orthogonal to $\xi$ on $U_\alpha$.
We can always choose $\partial^{(\alpha)}$ so that $\varphi(\partial^{(\alpha)})=i\partial^{(\alpha)}$ holds.
Conversely, we can recover all the information about the ACM structure $(\varphi,\xi,\eta,g)$ from the frame $\{\xi,\partial^{(\alpha)},\bar{\partial}^{(\alpha)}\}$ on $U_\alpha\ (\alpha\in\Lambda)$.
Suppose that a frame $\{\xi,\partial^{(\alpha)},\bar{\partial}^{(\alpha)}\}$ is given on an orientable three-dimensional Riemannian manifold $(M,g)$, and $\xi$ is a global unit vector field.
Since the metric $g|_D$ is defined on the distribution $D$ orthogonal to $\xi$, we can uniquely choose a complex structure $J$ on $D$ such that $g|_D(J\cdot,J\cdot)=g|_D(\cdot,\cdot)$ and $J(\partial^{(\alpha)})=i\partial^{(\alpha)}$ on $U_\alpha$.
Defining an endomorphism field $\varphi$ by $\varphi|_D:=J$ and $\varphi(\xi)=0$, we obtain a unique ACM structure.
Thus we may discuss ACM structures on $M$ in terms of the frame $\{\xi,\partial^{(\alpha)},\bar{\partial}^{(\alpha)}\}$ and an open covering $\{U_\alpha\}_{\alpha\in\Lambda}$ on orientable three-dimensional Riemannian manifolds.
The spin coefficients and all other quantities and definitions are the same as in Section \ref{sec:riemannian NP formalism}.

An ACM manifold $(M,\varphi,\xi,\eta,g)$ is said to be normal if the Nijenhuis tensor of the almost complex structure defined on $M\times\mathbb{R}$ by $\varphi,\xi,\eta$ vanishes.
A necessary and sufficient condition for an ACM structure to be normal is given by
\begin{align*}
[\varphi,\varphi]+\xi\otimes d\eta=0,
\end{align*}
where $[\varphi,\varphi]$ denotes the Nijenhuis torsion tensor of $\varphi$.
Moreover, a three-dimensional ACM structure $(\varphi, \xi, \eta, g)$ is normal if and only if \cite{Matsuno2025}
\begin{align*}
\kappa=\sigma=0.
\end{align*}
This condition means that the flow of the characteristic vector field $\xi$ is a shear-free geodesic congruence.

Trans-Sasakian structures provide many examples and have been studied extensively.
An ACM structure is said to be trans-Sasakian of type $(\alpha,\beta_s)$ if
\begin{align*}
(\nabla_X\varphi)Y
=
\alpha\{g(X,Y)\xi-\eta(Y)X\}
+\beta_s\{g(\varphi X,Y)\xi-\eta(Y)\varphi X\}.
\end{align*}
In much of the literature, the symbol $\beta$ is used in place of $\beta_s$, but since $\beta$ is also used for a spin coefficient, we shall use $\beta_s$ to avoid confusion, where the subscript $s$ denotes "Sasakian".
For a three-dimensional almost contact metric structure, the trans-Sasakian condition is equivalent to
\begin{align*}
\kappa=\sigma=0,
\qquad
\rho=\Theta+i\omega=\beta_s+i\alpha.
\end{align*}
Therefore, in dimension three, normal ACM structures and trans-Sasakian structures are equivalent.
Moreover, in this case, we have
\begin{align*}
\dd\eta=-2i\omega\,\mu\wedge\bar\mu.
\end{align*}
The rank is understood pointwise: $\operatorname{rank}_p\eta=1$ if $\omega(p)=0$, and $\operatorname{rank}_p\eta=3$ if $\omega(p)\ne0$, and rank two cannot occur.
We call the structure \textbf{rank one} if $\omega\equiv0$, \textbf{rank three}
if $\omega$ is nowhere zero, and \textbf{mixed rank} if $\omega\not\equiv0$ but $\omega$ has a zero.
The trans-Sasakian structure is called an $\alpha$-Sasakian structure if $(\alpha,\beta_s)=(\omega,0)$.
In dimension three, an $\alpha$-Sasakian structure is also called a quasi-Sasakian structure.
In particular, an $\alpha$-Sasakian structure of twist $\omega=1$ is called a Sasakian structure.
A trans-Sasakian structure is called a $\beta_s$-Kenmotsu structure if $(\alpha,\beta_s)=(0,\Theta)$.
In particular, a $\beta_s$-Kenmotsu structure of expansion $\Theta=1$ is called a Kenmotsu structure.
A trans-Sasakian structure is called a cosymplectic structure if $(\alpha,\beta_s)=(0,0)$.

As seen from the following proposition, on a three-dimensional trans-Sasakian manifold the characteristic vector field defines a conformal foliation by geodesics.
\begin{proposition}\label{prop:normal perp metric lie derivative}
Let $(M^3,\varphi,\xi,\eta,g)$ be a three-dimensional trans-Sasakian manifold.
Then $\xi$ satisfies
\begin{equation}
\mathcal{L}_\xi g=2\Theta(g-\eta^2).
\label{eq:lie-g}
\end{equation}
\end{proposition}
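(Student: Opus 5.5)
We evaluate $\mathcal L_\xi g$ on pairs of frame vectors, using $(\mathcal L_\xi g)(X,Y)=g(\nabla_X\xi,Y)+g(X,\nabla_Y\xi)$. This holds because $\nabla$ is torsion-free and metric. We work locally in a complex frame $\{\xi,\partial,\bar\partial\}$ adapted to the structure. Both sides of \eqref{eq:lie-g} are tensorial and frame independent. Hence it suffices to compare components in this frame; by symmetry and complex bilinearity, the pairs $(\xi,\xi)$, $(\xi,\partial)$, $(\partial,\partial)$ and $(\partial,\bar\partial)$ cover everything.

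From the spin-coefficient expansions of Section~\ref{sec:riemannian NP formalism} we have $\nabla_\xi\xi=-\bar\kappa\partial-\kappa\bar\partial$ and $\nabla_\partial\xi=\bar\rho\partial-\sigma\bar\partial$. Conjugating the latter gives $\nabla_{\bar\partial}\xi=\rho\bar\partial-\bar\sigma\partial$. The trans-Sasakian condition gives $\kappa=\sigma=0$ and $\rho=\Theta+i\omega$. Therefore
\begin{align*}
\nabla_\xi\xi=0,\qquad \nabla_\partial\xi=\bar\rho\,\partial,\qquad \nabla_{\bar\partial}\xi=\rho\,\bar\partial.
\end{align*}
We now read off the components using $g(\partial,\bar\partial)=1$ and $g(\partial,\partial)=g(\xi,\partial)=0$.
\begin{itemize}
\item $(\xi,\xi)$: $2g(\nabla_\xi\xi,\xi)=0$.
\item $(\xi,\partial)$: $g(\nabla_\xi\xi,\partial)+g(\xi,\nabla_\partial\xi)=0+\bar\rho\,g(\xi,\partial)=0$.
\item $(\partial,\partial)$: $2g(\nabla_\partial\xi,\partial)=2\bar\rho\,g(\partial,\partial)=0$. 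Equivalently, this is $-2\sigma=0$.
\item $(\partial,\bar\partial)$: $g(\nabla_\partial\xi,\bar\partial)+g(\partial,\nabla_{\bar\partial}\xi)=\bar\rho+\rho=2\Theta$.
\end{itemize}
On the right-hand side, $g-\eta^2$ vanishes whenever $\xi$ is an argument. It also gives $0$ on $(\partial,\partial)$ and $1$ on $(\partial,\bar\partial)$. Thus $2\Theta(g-\eta^2)$ has exactly the components computed above. The twist $\omega$ cancels because it enters $\rho$ and $\bar\rho$ with opposite signs, reflecting that $\nabla\xi$ restricted to $D$ is $\Theta\,\mathrm{id}$ plus a skew part.

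There is no real obstacle in this argument. The only point needing care is conventions: correctly conjugating $\nabla_\partial\xi$ to obtain $\nabla_{\bar\partial}\xi$, and extending the real tensor identity complex-bilinearly. A minor remark addresses frame dependence. The spin coefficients other than $\rho$ are gauge dependent, but the conditions $\kappa=\sigma=0$ are gauge invariant, and $\rho$ itself is invariant. Hence the local computations glue to the global identity \eqref{eq:lie-g} on $M$.
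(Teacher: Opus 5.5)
Your proof is correct and follows the same route as the paper: both evaluate $(\mathcal L_\xi g)(X,Y)=g(\nabla_X\xi,Y)+g(X,\nabla_Y\xi)$ on an adapted frame and use $\kappa=\sigma=0$ together with $\Re\rho=\Theta$. The only difference is cosmetic: the paper splits $\sigma=0$ and $\Re\rho=\Theta$ into components in the real orthonormal frame $\{\xi,e_2,e_3\}$, whereas you read off the components directly in the complex frame $\{\xi,\partial,\bar\partial\}$.
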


\begin{proof}
A straightforward calculation gives
\begin{align}
&0=-2\sigma=2g(\nabla_{\partial}\xi,\partial)
=g(\nabla_{e_2}\xi,e_2)-g(\nabla_{e_3}\xi,e_3)
-i\{g(\nabla_{e_2}\xi,e_3)+g(\nabla_{e_3}\xi,e_2)\},\\
&2\Theta=2\textrm{Re}\, g(\nabla_{\bar{\partial}}\xi,\partial)
=g(\nabla_{e_2}\xi,e_2)+g(\nabla_{e_3}\xi,e_3).
\end{align}
Therefore,
\[
g(\nabla_{e_2}\xi,e_2)=g(\nabla_{e_3}\xi,e_3)=\Theta,
\qquad
g(\nabla_{e_2}\xi,e_3)+g(\nabla_{e_3}\xi,e_2)=0.
\]
This implies
\[
(\mathcal{L}_\xi g)(e_2,e_2)=(\mathcal{L}_\xi g)(e_3,e_3)=2\Theta,
\qquad
(\mathcal{L}_\xi g)(e_2,e_3)=0
\]
Furthermore, since $0=-\sqrt{2}\kappa=g(\nabla_\xi\xi,e_2)-ig(\nabla_\xi\xi,e_3)$, we obtain $(\mathcal{L}_\xi g)(\xi,\cdot)=0$.
\end{proof}

\section{$\eta$-Einstein metrics on universal covers}
\label{sec:normal-forms}

An ACM manifold $(M,\varphi,\xi,\eta,g)$ is said to be \textcolor{blue}{\textbf{$\eta$-Einstein}} if
\[
\operatorname{Ric}=ag+b\,\eta\otimes\eta,\qquad a,b\in C^\infty(M).
\]
In this section, we classify three-dimensional simply connected noncompact proper trans-Sasakian $\eta$-Einstein manifolds.

\subsection{Rank of structures}
The rank of a three-dimensional trans-Sasakian manifold is either one or three at each point and need not be constant on the manifold.
Zeros of the twist $\omega$ correspond to rank one, but the distribution of zeros is complicated in general, so there are also many papers that assume constant rank.
Fortunately, however, the $\eta$-Einstein condition imposes strong restrictions on the distribution of zeros of $\omega$, so we can obtain strong conclusions about the rank.
In this subsection, we prove the following Proposition~\ref{prop:rank classification}.

\begin{proposition}
\label{prop:rank classification}
Let $(M^3,\varphi,\xi,\eta,g)$ be a connected three-dimensional trans-Sasakian $\eta$-Einstein manifold with complete $\xi$. 

\begin{enumerate}
  \item If $\omega\equiv0$, then the rank is one (trivial).
  \item If $\Theta\equiv0$ and $\omega\not\equiv0$, then the rank is three and $\omega$ is a nonzero constant.
  \item If $\Theta\omega\not\equiv0$, then $K:=\Ric(\xi,\xi)$ is constant. Furthermore, (i) if $K\geq0$, then the rank is three, or (ii) if $K<0$, then the rank is three on a dense subset and the rank is one on its complement.
\end{enumerate}
\end{proposition}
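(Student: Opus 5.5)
The plan is to write the $\eta$-Einstein condition in the Newman--Penrose frame and extract a small closed system for $\rho=\Theta+\ii\omega$ and $K$. Since $\Ric=ag+b\,\eta\otimes\eta$, we have $\Ric(\xi,\partial)=\Ric(\partial,\partial)=0$, $\Ric(\partial,\bar\partial)=a$ and $\Ric(\xi,\xi)=a+b=:K$. In the trans-Sasakian case $\kappa=\sigma=0$. I would then feed these into the earlier identities as follows.
\begin{itemize}
\item \eqref{eq:generalized Sachs eq 1} becomes the complex Riccati equation $-\xi(\rho)=\rho^2+\tfrac12K$. Its imaginary part is $\xi(\omega)=-2\Theta\omega$.
\item \eqref{eq:generalized Sachs eq 3} gives $\partial\rho=0$.
\item \eqref{eq:2nd bianchi 1} gives $\partial K=0$. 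Since $K$ is real, $\bar\partial K=0$ as well, so $\dd K$ has only an $\eta$-component.
\item Applying \eqref{eq:Lie bra 2} to $K$ gives $0=[\partial,\bar\partial]K=2\ii\omega\,\xi(K)$.
\end{itemize}
Thus $\omega\,\xi(K)=0$, and $K$ is locally constant on the open set $\{\omega\neq0\}$. Moreover, $\xi(\omega)=-2\Theta\omega$ is a linear ODE along orbits, so the zero set $Z=\{\omega=0\}$ is a union of complete $\xi$-orbits.

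Item 1 is immediate from the definition of rank. For item 2, I would set $\Theta\equiv0$. The real part of the Riccati equation then gives $\omega^2=\tfrac12K$. From $\partial\rho=\ii\partial\omega=0$ (and its conjugate) together with $\xi(\omega)=-2\Theta\omega=0$, we get $\dd\omega=0$. Hence $\omega$ is constant on the connected $M$, and it is nonzero because $\omega\not\equiv0$.

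For item 3, the key step is to show that $Z$ has empty interior whenever $\Theta\omega\not\equiv0$. Granting this, $\xi(K)=0$ holds on the dense set $M\setminus Z$, hence everywhere by continuity. Combined with $\partial K=\bar\partial K=0$, this gives $\dd K=0$, so $K$ is constant. To prove that $Z$ has empty interior, I would use $\partial\rho=0$. Restricted to a local surface transverse to $\xi$, it is a first-order Cauchy--Riemann-type equation, with the zeroth-order error controlled by $\xi(\rho)=-\rho^2-\tfrac12K$, so $\rho$ is pseudo-holomorphic there. A Bers--Vekua similarity-principle or unique-continuation argument should show the following: if $\Im\rho$ vanished on an open set adjacent to $\{\omega\neq0\}$, then $\rho$ would be real-valued there, hence locally constant in the transverse directions, and this would propagate and force $\omega\equiv0$ or $\Theta\equiv0$ on $M$. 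I expect this unique-continuation step to be the main obstacle. The difficulty is that $K$ is not yet known to be constant across $\partial Z$, so I would first work on $\overline{M\setminus Z}$ and use the Riccati equation to control $\xi$-derivatives.

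Once $K$ is constant, I would use completeness of $\xi$. On an orbit in $Z$, $\rho=\Theta$ is real and solves $\Theta'=-(\Theta^2+\tfrac12K)$ for all $t\in\R$.
\begin{itemize}
\item If $K>0$, every real solution blows up in finite time. So $Z=\emptyset$ and the rank is three.
\item If $K=0$, the only global real solution is $\Theta\equiv0$, so $\rho=0$ on $Z$. Then the pseudo-holomorphic structure makes $Z$ a union of isolated orbits. To exclude even these, I would use \eqref{eq:generalized Sachs eq 5} and \eqref{eq:2nd bianchi 2} for $a$ near such an orbit. The aim is to show that the nearby complex solutions $\rho=1/(t+c)$ with $\Im c\to0$ cannot fit together smoothly; this is a second delicate point.
\item If $K<0$, real global solutions such as $\Theta\equiv\pm\sqrt{|K|/2}$ exist, so $Z$ may be nonempty. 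By the empty-interior result, the rank is three on the dense set $M\setminus Z$ and one on $Z$, which is (ii).
\end{itemize}
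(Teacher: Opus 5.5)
Your setup (Riccati equation, $\partial\rho=0$, $\partial K=0$, $\omega\,\xi K=0$), items 1 and 2, the blow-up argument for $K>0$, and the reduction to $Z=\{\rho=0\}$ for $K=0$ all agree with the paper. However, the two steps that actually carry item 3 are left as hopes, and the first cannot be done as you sketch it.

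\textbf{First gap: the empty-interior claim for $Z$.} Both the constancy of $K$ and the density statement in item 3 rest on it. On a transversal section, $\partial\rho=0$ reads $(\partial_x-\ii\partial_y)\rho_0=-(p-\ii q)(\rho_0^2+\tfrac12K)$. This constrains only the $\partial_z$-derivative of $\rho_0$. Its conjugate obeys an equation in $\partial_{\bar z}$, so $\omega_0=\Im\rho_0$ satisfies no single first-order elliptic equation to which a Bers--Vekua similarity principle applies. Moreover, the zeroth-order term contains $K$, which, as you note yourself, is not known to be constant across $\partial Z$. The similarity principle gives isolated zeros of $\rho_0-c$ only when $c^2+\tfrac12K\equiv0$, so using it here is circular.

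The missing idea is a second-order identity in which $K$ drops out. From $\partial\rho=0$ (so $\partial\omega=\ii\partial\Theta$), $\xi\omega=-2\Theta\omega$, and $[\partial,\bar\partial]=2\ii\omega\xi+\bar\beta\partial-\beta\bar\partial$, the paper computes $\Delta_g\omega=-4\Theta^2\omega+2\omega\,\xi\Theta-2\xi(\Theta\omega)=0$. The $\xi\Theta$ terms, the only place $K$ could enter through the Riccati equation, cancel. Hence $\omega$ is harmonic, and Aronszajn's unique continuation gives directly that either $\omega\equiv0$ or $\{\omega\neq0\}$ is dense. Only after this is $K$ constant.

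\textbf{Second gap: the case $K=0$.} You correctly see that on a transversal section $\rho_0=e^{-H}\bar s$ with $s$ holomorphic, so the zeros of $\rho_0$ are isolated unless $\rho_0$ vanishes on an open set. Your plan to exclude isolated zero orbits via \eqref{eq:generalized Sachs eq 5} and \eqref{eq:2nd bianchi 2}, by showing that the solutions $1/(t+c)$ ``cannot fit together smoothly,'' is not an argument.

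The paper's mechanism is topological. On a small circle around a zero of $s$ of order $m$, $\rho_0$ has winding number $-m\neq0$. But completeness of $\xi$ forces $\rho$ to avoid every nonzero real value, since those blow up in finite time. So on the circle $\rho_0$ stays in one open half-plane and has winding number $0$, a contradiction. If instead $\rho_0$ vanishes on an open subset of the section, flowing that subset gives $\omega=0$ on an open set of $M$, which the harmonicity of $\omega$ forbids.

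Without these two ingredients, the constancy of $K$ and the rank-three conclusion for $K\geq0$ remain unproved.
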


\begin{lemma}
\label{lem:eta-einstein-rho}
Let $(M^3,\varphi,\xi,\eta,g)$ be a connected three-dimensional trans-Sasakian manifold. 
The structure is $\eta$-Einstein if and only if $\partial\rho=0$.
When it is $\eta$-Einstein, the twist $\omega$ satisfies $\Delta_g\omega=0$.
Consequently, either $\omega\equiv0$ or the subset $\{\omega\ne0\}$ is dense.
In the latter case, $K:=\Ric(\xi,\xi)$ is constant on all of $M$, and we have a Riccati equation
\begin{equation}
 \xi(\rho)=-\rho^2-\frac K2.
 \label{eq:riccati}
\end{equation}
\end{lemma}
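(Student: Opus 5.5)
Throughout we use the generalized Sachs equations with $\kappa=\sigma=0$ and $\rho=\beta_s+i\alpha$ for the trans-Sasakian structure.

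\emph{Step 1: $\eta$-Einstein iff $\partial\rho=0$.} The $\eta$-Einstein condition $\Ric=ag+b\,\eta\otimes\eta$ holds iff, in the complex frame, $\Ric(\xi,\partial)=0$ and $\Ric(\partial,\partial)=0$. Indeed, $g(\partial,\partial)=\eta(\partial)=0$, so the remaining components $\Ric(\xi,\xi)$ and $\Ric(\partial,\bar\partial)$ can be absorbed into $a$ and $b$. With $\kappa=\sigma=0$, equation \eqref{eq:generalized Sachs eq 2} gives $\Ric(\partial,\partial)=0$ identically. Equation \eqref{eq:generalized Sachs eq 3} becomes $-\partial\rho=\Ric(\xi,\partial)$. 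Hence the structure is $\eta$-Einstein iff $\partial\rho=0$. This condition is gauge invariant, because $\rho$ is and $\partial$ only rescales by a phase.

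\emph{Step 2: $\Delta_g\omega=0$.} Since $\rho$ is complex, $\partial\rho=0$ means $\partial\Theta=-i\,\partial\omega$, together with the conjugate relation $\bar\partial\Theta=i\,\bar\partial\omega$. The plan is to apply $\bar\partial$ to the first relation and $\partial$ to the second, then subtract. Using the commutator \eqref{eq:Lie bra 2}, $[\partial,\bar\partial]=2i\omega\xi+\bar\beta\partial-\beta\bar\partial$, the left side becomes $(\partial\bar\partial-\bar\partial\partial)\Theta$. This yields
\[
2i\omega\,\xi(\Theta)+\bar\beta\partial\Theta-\beta\bar\partial\Theta=-i(\bar\partial\partial+\partial\bar\partial)\omega .
\]
Substituting $\partial\Theta=-i\partial\omega$ produces terms $-i(\bar\beta\partial\omega+\beta\bar\partial\omega)$.

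In the frame $\{\xi,\partial,\bar\partial\}$, the Laplacian is $\Delta_g f=\xi\xi f-(\nabla_\xi\xi)f+(\partial\bar\partial+\bar\partial\partial)f-(\nabla_\partial\bar\partial+\nabla_{\bar\partial}\partial)f$. With $\kappa=0$ and $\nabla_\partial\bar\partial=-\bar\rho\xi-\beta\bar\partial$, this equals $\xi\xi f+2\Theta\,\xi f+(\partial\bar\partial+\bar\partial\partial)f+\beta\bar\partial f+\bar\beta\partial f$. The horizontal part of $\Delta_g\omega$ is therefore $-2\omega\xi(\Theta)$.

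It remains to compute the vertical part, $\xi\xi\omega+2\Theta\xi\omega$. Here we use the imaginary part of \eqref{eq:generalized Sachs eq 1}, namely $\xi(\omega)=-2\Theta\omega$, and its real part, $\xi(\Theta)=-\Theta^2+\omega^2-\tfrac12\Ric(\xi,\xi)$. Differentiating along $\xi$ gives
\[
\xi\xi\omega+2\Theta\xi\omega=-2\omega\xi\Theta-2\Theta\xi\omega+2\Theta\xi\omega=-2\omega\xi\Theta .
\]
This seems to give $\Delta_g\omega=-4\omega\xi\Theta$, so the sign conventions need care. The main obstacle is exactly this bookkeeping, and a sign error would break the conclusion. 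One should instead get a cancellation between the horizontal $\omega\xi(\Theta)$ term and the vertical term, so I would recheck the sign of the $[\partial,\bar\partial]$ coefficient ($\rho-\bar\rho=2i\omega$) against the Laplacian formula carefully.

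\emph{Step 3: density, constancy of $K$, and the Riccati equation.} Given $\Delta_g\omega=0$, unique continuation for harmonic functions on the connected $M$ shows that if $\omega$ vanishes on an open set then $\omega\equiv0$. Otherwise $\{\omega\neq0\}$ is dense.

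For the constancy of $K$, note that Step 1 gives $\Ric(\xi,\partial)=0$, so the combination $\xi(\rho)+\rho^2=-\tfrac12 K$ from \eqref{eq:generalized Sachs eq 1} is available. Apply $\partial$ to it. Commute $\partial$ past $\xi$ using $[\xi,\partial]=(\epsilon-\bar\rho)\partial$, and use $\partial\rho=0$. This gives $\partial\xi\rho=\xi\partial\rho-(\epsilon-\bar\rho)\partial\rho=0$ and $\partial(\rho^2)=0$, hence $\partial K=0$.

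For $\xi(K)$, use the second Bianchi identity \eqref{eq:2nd bianchi 2} with $\Ric(\xi,\partial)=\Ric(\partial,\partial)=0$. It gives $\xi(\Ric(\partial,\bar\partial))-\tfrac12\xi K=(\rho+\bar\rho)(K-\Ric(\partial,\bar\partial))$. Equation \eqref{eq:2nd bianchi 1} gives $\partial K=0$ again. To close the system, determine $\Ric(\partial,\bar\partial)$ from \eqref{eq:generalized Sachs eq 5} and differentiate along $\xi$; the aim is an identity of the form $\omega\,\xi(K)=0$. Then $\xi(K)=0$ on the dense set $\{\omega\neq0\}$, so $K$ is constant by continuity and connectedness. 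The Riccati equation \eqref{eq:riccati} is then just \eqref{eq:generalized Sachs eq 1} with $\kappa=\sigma=0$.
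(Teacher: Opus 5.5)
Your overall route is the paper's: Step~1 is identical, and Step~2 is the same frame computation of the Laplacian. However, Step~2 as written ends with the false formula $\Delta_g\omega=-4\omega\,\xi\Theta$, and the diagnosis you suggest is wrong. The coefficient $\rho-\bar\rho=2i\omega$ in \eqref{eq:Lie bra 2} and your expression for $\Delta_g$ are both correct. The slip is in the subtraction. From $\bar\partial\partial\Theta=-i\,\bar\partial\partial\omega$ and $\partial\bar\partial\Theta=i\,\partial\bar\partial\omega$ one gets $[\partial,\bar\partial]\Theta=+i(\partial\bar\partial+\bar\partial\partial)\omega$, not $-i(\cdots)$. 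Dividing by $i$ then gives
\[
(\partial\bar\partial+\bar\partial\partial)\omega+\beta\bar\partial\omega+\bar\beta\partial\omega=2\omega\,\xi\Theta .
\]
This cancels exactly your (correct) vertical part $\xi\xi\omega+2\Theta\,\xi\omega=-2\omega\,\xi\Theta$, so $\Delta_g\omega=0$. As submitted, though, Step~2 is not yet a proof.

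In Step~3, your derivation of $\partial K=0$ is correct. Applying $\partial$ to the Sachs equation and using $[\xi,\partial]=(\epsilon-\bar\rho)\partial$ and $\partial\rho=0$ is a legitimate alternative to the paper's appeal to \eqref{eq:2nd bianchi 1}. The genuine gap is $\xi K=0$: the plan via \eqref{eq:2nd bianchi 2} and a $\xi$-derivative of \eqref{eq:generalized Sachs eq 5} is never carried out. Writing $\Ric=ag+b\,\eta\otimes\eta$, the identity \eqref{eq:2nd bianchi 2} only yields $\xi a-\tfrac12\xi K=2\Theta b$, which involves the unknown $\xi a$. Differentiating \eqref{eq:generalized Sachs eq 5} brings in derivatives of $\beta$ and $\epsilon$, and nothing indicates that the system closes.

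The missing idea is one line, using the commutator you already used in Step~2. Since $K$ is real, $\bar\partial K=\overline{\partial K}=0$, so
\[
0=[\partial,\bar\partial]K=(\rho-\bar\rho)\,\xi K+\bar\beta\,\partial K-\beta\,\bar\partial K=2i\omega\,\xi K .
\]
Hence $\dd K=0$ on the dense set $\{\omega\ne0\}$. By continuity $\dd K\equiv0$, so $K$ is constant on the connected manifold $M$; this is exactly how the paper concludes.
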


\begin{proof}
Since $\kappa=\sigma=0$, the generalized Sachs equations \eqref{eq:generalized Sachs eq 2} and \eqref{eq:generalized Sachs eq 3} give $\Ric(\partial,\partial)=0$ and $\Ric(\xi,\partial)=-\partial\rho$.
Since $2\Ric(\partial,\partial)=\Ric(e_2,e_2)-\Ric(e_3,e_3)-2\ii\Ric(e_2,e_3)$, the $\eta$-Einstein condition is exactly $\partial\rho=0$.

The Sachs equation \eqref{eq:generalized Sachs eq 1} gives
$\xi\rho=-\rho^2-\Ric(\xi,\xi)/2$ and $\xi\omega=-2\Theta\omega$.
Since \(\partial\rho=0\) gives $\partial\omega=i\partial\Theta$, we have $\operatorname{grad}\omega=(\xi\omega)\xi+(\bar\partial\omega)\partial+(\partial\omega)\bar\partial
=-2\Theta\omega\,\xi-i(\bar\partial\Theta)\partial+i(\partial\Theta)\bar\partial$.
Furthermore,
\begin{align*}
g(\nabla_\partial\operatorname{grad}\omega,\bar\partial)
&=g\bigl(
-2\Theta\omega\,\nabla_\partial\xi
-i(\partial\bar\partial\Theta)\partial
-i(\bar\partial\Theta)\nabla_\partial\partial
+i(\partial\Theta)\nabla_\partial\bar\partial,\,
\bar\partial
\bigr)\\
&=g\bigl(
(-2\Theta\omega\bar\rho-i\partial\bar\partial\Theta-i\beta\bar\partial\Theta)\partial
+i(\partial\Theta)(-\bar\rho\xi-\beta\bar\partial),\,
\bar\partial
\bigr)\\
&=-2\Theta\omega\bar\rho-i\partial\bar\partial\Theta-i\beta\bar\partial\Theta.\\
g(\nabla_\xi\operatorname{grad}\omega,\xi)
&=g\bigl(
-2\xi(\Theta\omega)\xi
-2\Theta\omega\,\nabla_\xi\xi
-i(\bar\partial\Theta)\nabla_\xi\partial
+i(\partial\Theta)\nabla_\xi\bar\partial,\,
\xi
\bigr)\\
&=g\bigl(
-2\xi(\Theta\omega)\xi
-i\epsilon(\bar\partial\Theta)\partial
+i\bar\epsilon(\partial\Theta)\bar\partial,\,
\xi
\bigr)
=-2\xi(\Theta\omega)
\end{align*}
Thus, noting that $[\partial,\bar\partial]=(\rho-\bar\rho)\xi+\bar\beta\partial-\beta\bar\partial$, we obtain
\begin{align*}
\Delta_g\omega
&=g(\nabla_\partial\operatorname{grad}\omega,\bar\partial)+g(\nabla_{\bar\partial}\operatorname{grad}\omega,\partial)+g(\nabla_\xi\operatorname{grad}\omega,\xi)\\
&=-2\Theta\omega(\rho+\bar\rho)
 -i[\partial,\bar\partial]\Theta
 -i\beta\bar\partial\Theta+i\bar\beta\partial\Theta
 -2\xi(\Theta\omega)
=-4\Theta^2\omega-i(\rho-\bar\rho)\xi\Theta-2\xi(\Theta\omega)\\
&=-4\Theta^2\omega+2\omega\,\xi\Theta
 -2(\omega\,\xi\Theta+\Theta\,\xi\omega)
=-4\Theta^2\omega-2\Theta\,\xi\omega
=-4\Theta^2\omega-2\Theta(-2\Theta\omega)
=0.
\end{align*}



Unique continuation for the smooth elliptic operator $\Delta_g$ \cite{Aronszajn1957}, in the formulation of
\cite[Theorem~2.1]{BrandingMontaldoOniciucRatto}, now shows that if $\omega$ vanishes on a nonempty open set, then $\omega\equiv0$.

The second Bianchi identity \eqref{eq:2nd bianchi 1} gives $\partial K=0$.
Since $K$ is real, we also have $\bar\partial K=0$, and \eqref{eq:Lie bra 2} yields $0=[\partial,\bar\partial]K=2\ii\omega\,\xi K$.
Hence $\dd K=0$ on $\{\omega\ne0\}$.
If $\omega\not\equiv0$, this set is dense, so continuity gives $\dd K=0$ on all of $M$.
The Sachs equation then gives
\eqref{eq:riccati} with constant $K$.
\end{proof}

\begin{remark}
By the above proposition, a mixed rank structure cannot be obtained by gluing a rank-one open region to a rank-three open region.
\end{remark}

We can determine more precisely the range of values of $\rho=\Theta+i\omega$.
We first give the definitions of the terms needed for the discussion.
Let $\xi$ be a complete nowhere-vanishing vector field on a manifold $M$, and let $\Phi_t:\mathbb{R}\times M\to M$ be its flow.
Let $V\subset\R^2$ be an open set and $I\subset\R$ an open interval containing $0$.
Local coordinates $(x,y,t)\in V\times I$ on an open subset of $M$ such that $\xi=\partial_t$ are called a \textcolor{blue}{\textbf{flow box}}.
For $t_0\in I$, $V\times\{t_0\}$ is called a \textcolor{blue}{\textbf{transversal section}}.
$\{x\}\times I$ is called a \textcolor{blue}{\textbf{plaque}}.
We also write $\mathcal{O}(X)$ for the set of holomorphic complex-valued functions on $X$.

\begin{lemma}
\label{lem:transverse-zero}
Let $(M^3,\varphi,\xi,\eta,g)$ be a connected three-dimensional trans-Sasakian $\eta$-Einstein manifold with $\omega\not\equiv0$ and $K\leq0$.
In an arbitrary flow box $(x,y,t)\in V\times I$, the metric is written as
\begin{align*}
g=A(x,y,t)(\dd x^2+\dd y^2)+(\dd t+\alpha)^2,\\
A\in C^\infty(V\times I),\ \alpha\in\Omega^1(V)
\end{align*}
On the transversal section $\Sigma=V\times\{0\}$, set $F(x,y):=\rho(x,y,0)-\sqrt{|K|/2}$.
After shrinking $V$ if necessary, there exist $H\in C^\infty(V,\C),\  s\in\mathcal O(V)$ such that $F=e^{-H}\overline{s}$.
Thus, $F$ is either identically zero on $V$ or has isolated zeros.
For a sufficiently small positively oriented circle $\gamma\subset \Sigma$ around a point at which $s$ has a zero of order $m\geq1$, the winding number is
\begin{align*}
\operatorname{wind}(F\circ\gamma,0):=\frac{1}{2\pi\ii}\int_0^1
       \frac{(F\circ\gamma)'(t)}{F(\gamma(t))}\,\dd t
 =-m.
\end{align*}
\end{lemma}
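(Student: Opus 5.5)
The plan is to read the $\eta$-Einstein condition $\partial\rho=0$ from Lemma~\ref{lem:eta-einstein-rho} as a first-order equation in the transverse complex variable, using the Riccati equation \eqref{eq:riccati} to eliminate the derivative along $\xi$. This should show that $F$ satisfies a linear $\partial_z$-equation with a smooth coefficient. A similarity principle (a local $\bar\partial$-solve) then factors $F$ as a nonvanishing smooth function times an antiholomorphic function.

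\textbf{Step 1: the flow-box form of $\partial$.} The flow-box form of $g$ is taken as given in the statement. The conformal form of the transverse part is consistent with Proposition~\ref{prop:normal perp metric lie derivative} and the shear-free condition. The vector fields orthogonal to $\xi=\partial_t$ are spanned by the horizontal lifts $\partial_x-\alpha(\partial_x)\partial_t$ and $\partial_y-\alpha(\partial_y)\partial_t$. Write $z=x+\ii y$, $\partial_z=\tfrac12(\partial_x-\ii\partial_y)$ and $\alpha=\alpha_z\,\dd z+\alpha_{\bar z}\,\dd\bar z$. Then, up to a nonvanishing complex factor (gauge and normalization), $\partial$ is proportional to $\partial_z-\alpha_z\partial_t$. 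If the orientation convention makes $\partial$ proportional to $\partial_{\bar z}$ instead, the roles of $s$ and $\bar s$ are exchanged; I would fix the orientation so that it matches the claimed form $F=e^{-H}\bar s$. Since $\partial\rho=0$ is invariant under rescaling $\partial$, we get
\[
\partial_z\rho=\alpha_z\,\partial_t\rho .
\]

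\textbf{Step 2: reduce to $F$ on $\Sigma$.} Set $c=\sqrt{|K|/2}\ge0$. Because $K\le0$, the Riccati equation gives $\partial_t\rho=-\rho^2+c^2=-(\rho-c)(\rho+c)$. Here $K$ is constant because $\omega\not\equiv0$ (Lemma~\ref{lem:eta-einstein-rho}). Restricting to $t=0$, where $\partial_z$ commutes with the restriction since $\alpha$ does not depend on $t$, gives
\[
\partial_zF=-\alpha_z\,(F+2c)\,F=:hF,\qquad h\in C^\infty(V,\C).
\]
So $F$ satisfies a linear equation with a smooth coefficient. This holds for $K=0$ as well, with $c=0$.

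\textbf{Step 3: similarity principle.} Shrink $V$ to a disk. The equation $\partial_zH_0=h$ has a smooth solution on the disk: conjugate the standard smooth solvability of $\partial_{\bar z}u=f$ via the Cauchy transform, after cutting off $h$ on a slightly larger disk. Put $H=-H_0$ and $G=e^{H}F$. Then
\[
\partial_zG=e^{H}(\partial_zF-hF)=0,
\]
so $G$ is antiholomorphic, and we write $G=\bar s$ with $s\in\mathcal O(V)$. Hence $F=e^{-H}\bar s$. By the identity theorem on the connected disk, either $s\equiv0$, so $F\equiv0$, or $s$ has isolated zeros, and these are exactly the zeros of $F$ because $e^{-H}$ never vanishes.

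\textbf{Step 4: winding number.} Near a zero $p$ of $s$ of order $m$, write $s=(z-p)^m u$ with $u$ holomorphic and nonvanishing. Along a small circle $\gamma$, the winding number of $F\circ\gamma$ is the sum of the winding numbers of its factors. The factor $e^{-H}$ has a continuous logarithm, so its winding number is $0$. The factor $\bar u$ is nonvanishing on the small disk bounded by $\gamma$, so its winding number is also $0$. The factor $\overline{(z-p)}^{\,m}$ has winding number $-m$. Hence $\operatorname{wind}(F\circ\gamma,0)=-m$.

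The main obstacle is Step 1: getting the correct expression for $\partial$ in flow-box coordinates, in particular that it is proportional to $\partial_z-\alpha_z\partial_t$ rather than its conjugate. This fixes whether $s$ or $\bar s$ appears, and hence the sign $-m$. Everything after that is the standard similarity-principle argument.
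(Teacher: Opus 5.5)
Your proposal is correct and follows essentially the same route as the paper: your $\partial\propto\partial_z-\alpha_z\partial_t$ is the paper's $\partial$ up to the factor $\sqrt{2/A}$, with the orientation in which $\varphi$ is the lift of the standard complex structure. As in the paper, the Riccati equation turns $\partial\rho=0$ into the linear equation $\partial_zF=-\alpha_z(F+2c)F$ on $\Sigma$. A local inhomogeneous Cauchy--Riemann solve then makes $e^{H}F$ antiholomorphic, and the winding number $-m$ comes from the factor $\overline{(z-p)}^{\,m}$.

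The one thing the paper does that you skip is derive the flow-box normal form rather than take it as given. It uses $\mathcal L_\xi\eta=0$ to get $\eta=\dd t+\alpha(x,y)$. It then uses Proposition~\ref{prop:normal perp metric lie derivative} to carry isothermal coordinates on $\Sigma$ along the flow.
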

\begin{proof}
Since $\mathcal L_\xi\eta=0$, we can write $\eta=\dd t+\alpha(x,y),\ \alpha\in\Omega^1(V)$ in the flow box $V\times I$.
By Proposition~\ref{prop:normal perp metric lie derivative}, the characteristic flow induces conformal transformations in the horizontal directions.
We take isothermal coordinates $(x,y)$ for $(g-\eta\otimes\eta)|_{\Sigma}$ and extend them so that they are constant along the flow of $\xi$.
Thus, if we take isothermal coordinates $\{x,y\}$ on each leaf of each transversal foliation $F_t:=\{V\times{t};\ t\in I\}$, the metric in the flow box $V\times I$ is written as $g=A(x,y,t)(\dd x^2+\dd y^2)+(\dd t+\alpha)^2$.
Here, write $\alpha=p(x,y)\dd x+q(x,y)\dd y$.

The horizontal complex vector field is $\partial=\frac{1}{\sqrt{2A}}   \left\{\partial_x-\ii\partial_y-(p-\ii q)\partial_t\right\}$.
Using $\partial\rho=0$ and the Riccati equation $\partial_t\rho=-(\rho-\sqrt{|K|/2})(\rho+\sqrt{|K|/2})$, and restricting to $t=0$, we obtain
\[
 (\partial_x-\ii\partial_y)F
 +(p-\ii q)(\rho|_{t=0}+\sqrt{|K|/2})F=0.
\]
Let $H\in C^\infty(V,\C)$ be a solution of the inhomogeneous Cauchy--Riemann equation $ (\partial_x-\ii\partial_y)H =(p-\ii q)(\rho|_{t=0}+\sqrt{|K|/2})$.
This equation always has a local solution, so shrink $V$ if necessary.
Then $(\partial_x-\ii\partial_y)(e^HF)=0$, so $e^HF$ is antiholomorphic.
Hence there exists a holomorphic function $s\in\mathcal{O}(V)$ such that $F=e^{-H}\bar{s}$.
If $s$ is not identically zero, the zeros of the holomorphic function are isolated.
The factor $e^{-H}$ extends without zeros to the interior of the circle, so its winding number is $0$.
If $s$ has a zero of order $m$, then $\bar{s}$ has winding number $-m$ around that zero.
The winding number of a product is the sum of the winding numbers, so the conclusion follows.
\end{proof}

\begin{proposition}
\label{prop:complete-riccati-branches}
Let $(M^3,\varphi,\xi,\eta,g)$ be a connected three-dimensional trans-Sasakian $\eta$-Einstein manifold with $\omega\not\equiv0$ and complete $\xi$.
Then the following hold.
\begin{enumerate}[label=\textup{(\roman*)}]
\item If $K>0$, then $\omega$ is nonzero at every point.
\item If $K=0$, then $\rho$ and $\omega$ are nonzero at every point.
\item If $K<0$, then $\rho(M)\cap \left(\left(-\infty,-\sqrt{|K|/2}\,\right]\cup\left[\sqrt{|K|/2},\infty\right)\right)=\varnothing$.
\end{enumerate}
In particular, mixed rank occurs only when $K<0$.
\end{proposition}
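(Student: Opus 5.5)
The approach is to analyze the complex Riccati equation \eqref{eq:riccati}, $\xi(\rho)=-\rho^2-K/2$, along each characteristic orbit. This is legitimate because $\xi$ is complete and, by Lemma~\ref{lem:eta-einstein-rho}, $K$ is constant once $\omega\not\equiv0$. Along an integral curve $t\mapsto\Phi_t(p)$, the function $f(t)=\rho(\Phi_t(p))$ is smooth and defined for all $t\in\R$, and it solves $f'=-f^2-K/2$. So the proof reduces to the question of which initial values $f(0)\in\C$ give global solutions on all of $\R$ of this scalar ODE.

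For $K>0$, set $c=\sqrt{K/2}$. Real initial data give $f(t)=c\tan(\arctan(f(0)/c)-ct)$, which blows up in finite time in both directions. So $\omega(p)=0$ is impossible, and the imaginary axis is not even needed. Since the real line is invariant for the ODE, a real value at one point of the orbit forces real values along the whole orbit. This gives (i).

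For $K=0$, the solution is $f(t)=f(0)/(1+tf(0))$. If $f(0)$ is real and nonzero, it blows up at $t=-1/f(0)$, so $\omega(p)\ne0$ whenever $\rho(p)\neq0$.

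The case $\rho(p)=0$ needs more care, because $f\equiv0$ is a global solution. Here I would use the structure of $\{\rho=0\}$. By Lemma~\ref{lem:transverse-zero} with $K=0$, on a transversal section $F=\rho|_\Sigma=e^{-H}\bar s$. Since $\omega\not\equiv0$, $s\not\equiv0$, so the zeros of $\rho$ on $\Sigma$ are isolated. Near an isolated zero of order $m$ the winding number is $-m$. I would then transport along the flow. Since $\rho_t=\rho_0/(1+t\rho_0)$, near $p$ with $\rho(p)=0$ the map $\rho_0\mapsto\rho_t$ is a Möbius map fixing $0$ with derivative $1$ there, so the winding is preserved. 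Instead of fighting this, I would use the real locus more directly. Near an isolated zero of $F$ of nonzero winding number, $F$ takes every small complex value, in particular small real nonzero values $r$. The corresponding points $q$ have $\rho(q)=r\in\R\setminus\{0\}$ and $\omega(q)=0$, contradicting the blow-up just shown. Hence $\rho\ne0$ everywhere. Together with the blow-up argument this gives $\omega\neq0$ everywhere, which is (ii). This openness/winding step is the one I expect to be the main obstacle. Surjectivity onto a neighborhood follows from a nonzero local degree: $F$ is a continuous map with nonzero winding around an isolated zero, so its degree there is nonzero.

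For $K<0$, set $c=\sqrt{|K|/2}$, so that $f'=-(f-c)(f+c)$. For real $f(0)$ with $|f(0)|>c$ the solution is a $\coth$-type function and blows up in finite backward or forward time, so such values are excluded. The remaining real boundary values $f(0)=\pm c$ are equilibria and must be excluded by the zero-structure argument. At a point with $\rho(p)=c$, Lemma~\ref{lem:transverse-zero} gives an isolated zero of $F=\rho-c$ of winding $-m\neq0$. (The alternative $s\equiv0$ would make $\rho\equiv c$ on an open set, hence $\omega=0$ there, contradicting density of $\{\omega\ne0\}$.) So $F$ attains small positive real values, meaning $\rho$ takes real values just above $c$, which blow up. For $\rho(p)=-c$, apply the same lemma with the roles of $\pm c$ swapped, using $F=\rho+c$ after reflecting $\rho\mapsto-\rho$ and $t\mapsto -t$, which preserves the Riccati equation. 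Then $\rho$ takes real values just below $-c$, and these also blow up. This gives (iii).

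The final remark follows directly. For $K\ge0$, $\omega$ is nowhere zero, so the rank is three; mixed rank can therefore occur only when $K<0$.
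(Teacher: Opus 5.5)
Your proposal is correct and follows essentially the same route as the paper: finite-time blow-up of real Riccati solutions rules out every real value of $\rho$ except the equilibria ($\rho=0$ for $K=0$, $\rho=\pm\sqrt{|K|/2}$ for $K<0$), and these are then excluded by Lemma~\ref{lem:transverse-zero}, with the degenerate case $s\equiv0$ ruled out by flowing along $\xi$ and applying unique continuation. The only difference is the direction in which you read the topological fact (you use that nonzero winding forces $F$ to take small nonzero real values, while the paper uses that $F$ omits a ray from the origin on the small circle and so has winding zero), and your reflection for $\rho=-\sqrt{|K|/2}$ is legitimate because $(\varphi,-\xi,-\eta,g)$ is again trans-Sasakian $\eta$-Einstein with the same $K$ and $\rho\mapsto-\rho$.
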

\begin{proof}
Since $\xi$ is smoothly defined on $M$, $\rho$ is finite and smooth everywhere on $M$.
Along each integral curve of $\xi$, the equation $\dot\rho=-\rho^2-K/2$ has a smooth solution for all real time, and a solution with a real initial value remains real-valued by the uniqueness theorem for ordinary differential equations.

Proof of (i)\\
For $K>0$, the solution with a real initial value is
\[
\rho(t)=\sqrt{\frac K2}\tan\left(\arctan\left(\sqrt{\frac2K}\rho(0)\right)-\sqrt{\frac K2}\,t\right).
\]
It diverges in finite time, so $\rho$ does not take real values.
Therefore, $\omega=\Im\rho$ is nonzero at every point.

Proof of (ii)\\
For $K=0$, the solution with a nonzero real initial value is $\rho(t)=\rho(0)/(1+t\rho(0))$, and it diverges in finite time.
  Therefore, in this case, the only real value that can be taken is $0$.
  Suppose that $\rho$ has a zero $p$, and take a flow box $V\times I$ containing that point.
  If necessary, change the coordinate on $I$ so that $p$ lies in the transversal section $\Sigma=V\times\{0\}$.
  Apply Lemma~\ref{lem:transverse-zero} in the case $K=0$, and set $F=\rho|_{t=0}$.
  If $F$ has an isolated zero, its winding number on a sufficiently small positively oriented circle around that zero is nonzero.
  On the other hand, $F$ avoids nonzero real values on the circle, so its argument can be chosen to be single-valued, and its winding number is $0$, which is a contradiction.
  If $F$ is identically zero on an open subset of the transversal section, then uniqueness for the Riccati equation gives $\rho=0$ on a nonempty open subset of $M$ obtained by moving that open subset by the flow for a short time.
  In particular, $\omega=0$ there, and unique continuation in Lemma~\ref{lem:eta-einstein-rho} gives $\omega\equiv0$, which also contradicts the assumption.
  Therefore, $\rho$ is nonzero and does not take any real values, so $\omega$ is also nonzero at every point.

Proof of (iii)\\
For $K<0$, a real solution $\rho$ with $|\rho(0)|>\sqrt{|K|/2}$ is $\rho(t)=\sqrt{|K|/2}\coth(\sqrt{|K|/2}t+\frac12\log\frac{\rho(0)+\sqrt{|K|/2}}{\rho(0)-\sqrt{|K|/2}})$, and it diverges in finite time, so this case is impossible.
  The remaining possibility to exclude is $\rho=\pm\sqrt{|K|/2}$.
  Suppose that there is a point at which $\rho=\sqrt{|K|/2}$, and set $F=(\rho-\sqrt{|K|/2})|_{t=0}$ on a transversal section through that point.
  By the preceding argument, $\rho$ does not take real values greater than $\sqrt{|K|/2}$, so $F$ avoids the positive real half-line.
  If there is an isolated zero, the winding number on a circle around it is nonzero by Lemma~\ref{lem:transverse-zero}, whereas using an argument on the slit plane gives $0$, which is a contradiction.
  If $F$ is identically zero on an open subset of the transversal section, then uniqueness for the Riccati equation gives $\rho=\sqrt{|K|/2}$ on a nonempty open subset of $M$, and hence $\omega=0$.
  Unique continuation in Lemma~\ref{lem:eta-einstein-rho} gives $\omega\equiv0$, which contradicts the assumption.
  The case $\rho=-\sqrt{|K|/2}$ is similar.
\end{proof}

By the above proposition, when $K\geq0$, we always assume $\omega>0$, making the replacement $(\xi,\eta)\mapsto(-\xi,-\eta)$ if necessary.

\begin{proof}[Proof of Proposition \ref{prop:rank classification}]
If $\omega\equiv0$, then the rank is trivially one.

Next suppose that $\Theta\equiv0$ and $\omega\not\equiv0$.
The $\eta$-Einstein condition $\partial\rho=0$ gives $0=\partial\rho=\ii\partial\omega$.
The imaginary part of \eqref{eq:riccati} gives $\xi(\omega)+2\omega\Theta=0$, which implies $\xi\omega=0$.
Thus $\omega$ is a nonzero constant, and the rank is then three.

Finally, suppose that $\Theta\omega\not\equiv0$.
By Lemma~\ref{lem:eta-einstein-rho}, $K$ is constant.
By Proposition~\ref{prop:complete-riccati-branches}, if $K\geq0$, the rank is three.
By Proposition~\ref{prop:complete-riccati-branches}, the mixed rank case can occur when $K<0$, and by Lemma~\ref{lem:eta-einstein-rho}, the rank is three on a dense subset and the rank is one on its complement.
\end{proof}

In the twist-free case, that is, for $\beta$-Kenmotsu manifolds, and in the expansion-free case, that is, for $\alpha$-Sasakian manifolds, $\eta$-Einstein metrics are already known.
Therefore, we are interested in the case that is neither of these, that is, when $\Theta\omega\not\equiv0$.
Following \cite{BeldjilaliDelloum2024}, we give this structure the following name.
\begin{definition}
Let $(M^3,\varphi,\xi,\eta,g)$ be a connected three-dimensional trans-Sasakian manifold.
This structure is called \textcolor{blue}{\textbf{proper trans-Sasakian}} if $\omega\Theta\not\equiv0$.
\end{definition}

\subsection{Twist-free or expansion-free case}
The results for the globally twist-free case and the globally expansion-free case have already been partially reported in the literature \cite{InoguchiLee2022,PrasadPankajTripathiShukla,ZhaoWangLiu2018}.
They are therefore not essentially new.
We describe them below for a self-contained presentation and for the reader's convenience.

Olszak \cite{Olszak} defined the horizontal conformal transformation of an adapted metric on a three-dimensional ACM manifold by
\begin{align}
g\mapsto g'=e^{2f}g+(1-e^{2f})\eta\otimes\eta,\qquad f\in C^\infty(M)
\end{align}
We then obtain the following proposition.
\begin{proposition}[\cite{Olszak}]\label{prop:trans-Sasakian to Sasakian}
Let $(M^3,\varphi,\xi,\eta,g)$ be a three-dimensional Sasakian manifold.
For every $f\in C^\infty(M,\R)$, the metric
\[
 g'=e^{2f}g+(1-e^{2f})\eta\otimes\eta
\]
together with the unchanged $\varphi,\xi,\eta$ defines a rank-three trans-Sasakian structure.
Conversely, let $(M^3,\varphi,\xi,\eta,g)$ be a three-dimensional rank-three trans-Sasakian manifold.
Replacing $\xi$ with $-\xi$ if necessary, we can assume $\omega>0$.
Then the same formula with $f=\tfrac12\log\omega$ defines a Sasakian structure.
\end{proposition}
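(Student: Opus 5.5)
The plan is to work entirely in the frame formalism of Section~\ref{sec:some properties of 3D trans-Sasakian}. We track how the adapted frame and the twist and expansion change under the horizontal conformal transformation, and we use the fact that in dimension three the trans-Sasakian condition is exactly $\kappa=\sigma=0$ with $\rho=\beta_s+\ii\alpha$.

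\textbf{Step 1: the new frame.} We first check that $(\varphi,\xi,\eta,g')$ is again an almost contact metric structure. Since $g'$ agrees with $g$ on $\xi$, keeps $\xi\perp D$, and rescales $g|_D$ conformally by $e^{2f}$, the $\varphi$-invariance of $g|_D$ is preserved. An adapted complex frame for $g'$ is then
\[
\xi'=\xi,\qquad \partial'=e^{-f}\partial,\qquad \eta'=\eta,\qquad \mu'=e^{f}\mu .
\]
We still have $\varphi\partial'=\ii\partial'$, so the same $\varphi$ is encoded.

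\textbf{Step 2: normality is preserved.} Normality is the condition $[\varphi,\varphi]+\xi\otimes\dd\eta=0$, which involves only $(\varphi,\xi,\eta)$ and not the metric. Hence the new structure is normal, so $\kappa'=\sigma'=0$, and by the three-dimensional equivalence it is trans-Sasakian.

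As a cross-check, we expand $[\xi,\partial']=e^{-f}[\xi,\partial]-\xi(f)\partial'$ using the bracket formulas. This gives $\kappa'=e^{-f}\kappa$, $\sigma'=\sigma$, and
\[
\epsilon'-\bar\rho'=\epsilon-\bar\rho-\xi f .
\]
Since $g(\partial,\bar\partial)=1$ forces $\epsilon$ (and likewise $\epsilon'$) to be purely imaginary, taking real parts yields
\[
\Theta'=\Theta+\xi f .
\]

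\textbf{Step 3: the twist.} For the twist we compare $\dd\eta=-2\ii\omega\,\mu\wedge\bar\mu$ with $\dd\eta'=-2\ii\omega'\,\mu'\wedge\bar\mu'$. Since $\eta'=\eta$ and $\mu'\wedge\bar\mu'=e^{2f}\mu\wedge\bar\mu$, we get
\[
\omega'=e^{-2f}\omega .
\]
Equivalently, one can read this off from the $\xi$-component of $[\partial',\bar\partial']$. In the forward direction we start from a Sasakian structure, so $\omega=1$ and $\omega'=e^{-2f}$ is nowhere zero. Thus the new structure has rank three.

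\textbf{Step 4: the converse.} Take a rank-three trans-Sasakian structure with $\omega>0$ and set $f=\tfrac12\log\omega$. Step~3 gives $\omega'=1$. For the expansion we need the evolution of $\omega$ along $\xi$. The Sachs equation \eqref{eq:generalized Sachs eq 1} with $\kappa=\sigma=0$ gives $\xi\rho=-\rho^2-\tfrac12\Ric(\xi,\xi)$. This holds without any $\eta$-Einstein assumption, and since $\Ric(\xi,\xi)$ is real, its imaginary part is
\[
\xi\omega=-2\Theta\omega .
\]
Therefore $\xi f=\xi\omega/(2\omega)=-\Theta$, and Step~2 gives $\Theta'=0$. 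Hence $(\alpha',\beta_s')=(1,0)$ and the new structure is Sasakian.

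The only delicate point is the bookkeeping of the expansion: we must confirm that $\epsilon'$ is still purely imaginary for $g'$, so that the real part of the $\partial'$-coefficient in $[\xi,\partial']$ is exactly $-\Theta'$. Everything else is direct substitution.
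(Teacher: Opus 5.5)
Your proposal is correct and takes essentially the paper's route: the same rescaled frame $\partial'=e^{-f}\partial$, the same transformation laws $\omega'=e^{-2f}\omega$ and $\Theta'=\Theta+\xi(f)$, and the same use of the imaginary part of the Sachs equation \eqref{eq:generalized Sachs eq 1} for the converse. The differences are minor: the paper gets $\rho'$ in one step from the Koszul formula applied to $[\xi',\partial']$ and $[\partial',\bar\partial']$, and reads the trans-Sasakian property from $\kappa'=e^{-f}\kappa$, $\sigma'=\sigma$; you instead extract $\Theta'$ from the bracket and $\omega'$ from $\dd\eta$, and you use the metric-independence of normality, which makes the first assertion explicit.
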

\begin{proof}
Since the complex frame associated with $g'=e^{2f}g+(1-e^{2f})\eta\otimes\eta$ can be defined by $\xi'=\xi,\ \partial' = e^{-f}\partial$, we have
\begin{align*}
[\xi',\partial']
&= e^{-f}[\xi,\partial]-\xi(f)\partial'\\
&=e^{-f}\kappa\xi+(\epsilon-\bar{\rho})\partial'
+\sigma\bar{\partial}'-\xi(f)\partial',\\
[\partial',\bar\partial']
&=e^{-2f}[\partial,\bar\partial]
-e^{-2f}\partial(f)\bar\partial
+e^{-2f}\bar\partial(f)\partial\\
&=2ie^{-2f}\omega\xi
+e^{-f}\bigl(\bar\beta+\bar\partial(f)\bigr)\partial'
-e^{-f}\bigl(\beta+\partial(f)\bigr)\bar\partial'
\end{align*}
and hence
\begin{align*}
\rho' &= g'(\nabla'_{\bar\partial'}\xi, \partial') \\
&= \frac{1}{2}\left\{
g'([\bar\partial', \xi],\partial')
+g'([\partial', \bar\partial'],\xi')
-g'([\xi,\partial'], \bar\partial')
\right\}\\
&=\frac{1}{2}\{-(\bar{\epsilon}-\rho)+\xi(f)+2ie^{-2f}\omega-(\epsilon-\bar\rho)+\xi(f)\}\\
&= \Theta+\xi(f)+i\omega e^{-2f}.
\end{align*}
Therefore, we obtain
\begin{align*}
&\omega'=e^{-2f}\omega,\\
&\Theta'=\Theta+\xi(f)
\end{align*}

The first assertion of the proposition is immediate from these formulas.
For the second assertion, it suffices to show the existence of a transformation from any rank-three three-dimensional trans-Sasakian structure to a Sasakian structure.
We can assume $\omega>0$ by replacing $\xi$ with $-\xi$ if necessary.
Indeed, applying the horizontal conformal transformation with $f=\frac{1}{2}\log\omega$ gives $\omega'=1$.
Moreover, $\Theta'=\Theta+\frac{\xi(\omega)}{2\omega}$, while the imaginary part of the generalized Sachs equation \eqref{eq:generalized Sachs eq 1} gives $\xi(\omega)+2\omega\Theta=0$; hence $\Theta'=0$.
\end{proof}

\begin{proposition}\label{prop:eta-einstein simple case}
Let $(M^3,\varphi,\xi,\eta,g)$ be a connected three-dimensional trans-Sasakian $\eta$-Einstein manifold with complete $\xi$.
If the structure has rank one, that is, if $\omega\equiv0$, then the universal cover $(\widetilde{M},\tilde{\varphi},\tilde{\xi},\tilde{\eta},\tilde{g})$ can be written globally as
\[
\widetilde{M}=\mathbb{R}_z\times B,\qquad
\tilde{g}=dz^2+e^{2\Omega(z)}h,
\]
where $(B,h)$ is a connected simply connected Riemannian surface and $\Omega\in C^\infty(\R,\R)$.
If $\Theta\equiv0$ and $\omega\not\equiv0$, then $\omega$ is a nonzero constant and $g$ is homothetic to a Sasakian metric.
\end{proposition}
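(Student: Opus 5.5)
The proof splits into the two cases of the statement, and I handle the easier expansion-free case first.

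\emph{Expansion-free case.} Suppose $\Theta\equiv0$ and $\omega\not\equiv0$. Proposition~\ref{prop:rank classification}(2) already gives that $\omega$ is a nonzero constant. After replacing $(\xi,\eta)$ by $(-\xi,-\eta)$ if necessary, we may assume $\omega>0$. Apply the horizontal conformal transformation of Proposition~\ref{prop:trans-Sasakian to Sasakian} with $f=\tfrac12\log\omega$, which is constant. This yields a Sasakian metric $g'=\omega g+(1-\omega)\eta\otimes\eta$. To obtain a genuine homothety, rescale instead: set $\hat g=\omega g$, $\hat\xi=\omega^{-1/2}\xi$, $\hat\eta=\omega^{1/2}\eta$, with $\varphi$ unchanged. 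Under a constant scaling $g\mapsto c^2 g$, $\xi\mapsto c^{-1}\xi$, the spin coefficient $\rho$ scales as $\rho\mapsto c^{-1}\rho$. With $c=\omega^{1/2}$ we get $\hat\Theta=0$ and $\hat\omega=1$, so the structure $(\varphi,\hat\xi,\hat\eta,\hat g)$ is Sasakian and $g$ is homothetic to it.

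\emph{Rank-one case.} Suppose $\omega\equiv0$. Then $\dd\eta=-2\ii\omega\,\mu\wedge\bar\mu=0$, so $\eta$ is closed. Lift everything to the universal cover; there $\tilde\eta=\dd z$ for a function $z$, and $\tilde\xi$ is still complete. The next step is to show that $\Theta$ is basic, i.e.\ a function of $z$ alone.
\begin{itemize}
\item From $\partial\rho=0$ (Lemma~\ref{lem:eta-einstein-rho}) we get $\partial\Theta=0=\bar\partial\Theta$, so $\dd\Theta=\xi(\Theta)\,\eta=\xi(\Theta)\,\dd z$.
\item Consequently the level sets of $z$, which are the integral surfaces of $\ker\eta$, are also level sets of $\Theta$ on each connected leaf.
\end{itemize}

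The main obstacle is the global product structure. Since $\xi=\operatorname{grad}z$ is a complete unit vector field, the flow $\Phi_t$ maps level sets to level sets: $z(\Phi_t p)=z(p)+t$. Hence the map $\R\times B\to\widetilde M$, $(t,b)\mapsto\Phi_t(b)$, with $B=z^{-1}(0)$, is a diffeomorphism. It is surjective because every orbit meets $B$ at $t=-z(p)$, and injective because $z$ is strictly increasing along orbits. It follows that $B$ is connected and simply connected, since $\widetilde M$ is.

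\emph{The warped form of the metric.} By Proposition~\ref{prop:normal perp metric lie derivative}, $\mathcal L_\xi g=2\Theta(g-\eta^2)$. Pulling back, the metric becomes $\dd z^2+g_z$, with $\partial_z g_z=2\Theta g_z$ on $B$. To integrate this, I need $\Theta$ to depend only on $z$, i.e.\ to be constant on each level set $\{z\}\times B$. This holds because $\dd\Theta$ is proportional to $\dd z$ and each level set is connected, being a diffeomorphic copy of $B$. Then $g_z=e^{2\Omega(z)}h$ with $\Omega(z)=\int_0^z\Theta$ and $h=g_0|_B$, which gives the stated warped product form.
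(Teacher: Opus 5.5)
Your rank-one argument is the paper's: $\eta$ is closed, $\tilde\eta=\dd z$ on the universal cover, the flow gives a diffeomorphism $\R\times B\to\widetilde M$, $\partial\Theta=0$ forces $\Theta=\Theta(z)$, and you integrate $\partial_z g_z=2\Theta g_z$. Your explicit check that each level set $\{z\}\times B$ is connected is a detail the paper leaves implicit.

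The expansion-free case has a concrete error in the rescaling. Your scaling law is correct. Under $g\mapsto c^2g$, $\xi\mapsto c^{-1}\xi$, $\eta\mapsto c\eta$ with $\varphi$ fixed, one has $\rho\mapsto c^{-1}\rho$. Equivalently, $\dd\hat\eta=c\,\dd\eta$ and $\hat\mu\wedge\bar{\hat\mu}=c^2\mu\wedge\bar\mu$ give $\hat\omega=\omega/c$.

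Your choice of constant is the problem. Taking $c=\omega^{1/2}$ gives $\hat\omega=\omega^{1/2}$, not $1$. So $(\varphi,\omega^{-1/2}\xi,\omega^{1/2}\eta,\omega g)$ is only $\alpha$-Sasakian with constant $\alpha=\omega^{1/2}$, not Sasakian unless $\omega=1$. The correct choice is $c=\omega$, that is, $\hat g=\omega^2g$, $\hat\xi=\omega^{-1}\xi$, $\hat\eta=\omega\eta$. This gives $\hat\Theta=0$ and $\hat\omega=1$.

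With this fix your argument is complete and gives exactly the paper's metric $g''=\omega^2 g$. Your route is more direct than the paper's. The paper first applies the horizontal conformal change of Proposition~\ref{prop:trans-Sasakian to Sasakian} to get $g'=\omega g+(1-\omega)\eta\otimes\eta$. It then applies a $D$-homothetic deformation with factor $\omega$ to return to a constant multiple of $g$. A single constant rescaling suffices, because it preserves $\kappa=\sigma=0$ and just divides $\rho$ by $c$. Your preliminary step producing $g'$ is therefore unnecessary.
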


\begin{proof}
First, suppose that the structure has rank one.
Since $\omega=0$, we have $d\eta=0$.
Passing to the universal covering, we may write $\tilde{\eta}=dz$ for some smooth function $z$ on $\widetilde{M}$.
Since $\tilde{\eta}(\tilde{\xi})=1$, we have $\tilde{\xi}(z)=1$.
Since $\xi$ is complete, the flow $\Phi_t$ of $\tilde{\xi}$ is defined for all $t\in\mathbb{R}$.
Let $B:=z^{-1}(0)$.
Then $F:\mathbb{R}\times B\longrightarrow \widetilde{M},\ F(t,p):=\Phi_t(p)$ is a diffeomorphism.
Hence we may identify $\widetilde{M}$ with $\mathbb{R}\times B$ and write $\tilde{\xi}=\partial_z,\ \tilde{\eta}=dz$.
Since $\ker\tilde{\eta}$ is orthogonal to $\tilde{\xi}$, the metric has no mixed terms and is of the form $\tilde{g}=dz^2+h_z$, where $h_z$ is a Riemannian metric on $B$ depending on $z$.
The $\eta$-Einstein condition is $\partial\Theta=0$.
Therefore, $\Theta$ is constant along the leaves of $\ker\tilde{\eta}$, so $\Theta=\Theta(z)$.
Furthermore, by Proposition~\ref{prop:normal perp metric lie derivative}, for horizontal vector fields $X,Y\in\ker\tilde{\eta}$, we have $(\mathcal{L}_{\tilde{\xi}}\tilde{g})(X,Y)=2\Theta\,\tilde{g}(X,Y)$, which implies $\frac{d}{dz}h_z=2\Theta(z)h_z$.
Thus, we obtain $h_z=e^{2\Omega(z)}h$.
Therefore, the metric is given by $\tilde{g}=dz^2+e^{2\Omega(z)}h$.

Next, suppose that $\Theta\equiv0$ and $\omega\not\equiv0$.
By Proposition~\ref{prop:rank classification}, $\omega$ is a nonzero constant, and we can assume $\omega>0$.
The metric
$g'=\omega g+(1-\omega)\eta\otimes\eta$, obtained by the horizontal conformal transformation, is Sasakian by
Proposition~\ref{prop:trans-Sasakian to Sasakian}.
Applying the $D$-homothetic transformation with factor $\omega$ gives the
Sasakian metric $g''=\omega g'+(\omega^2-\omega)\eta\otimes\eta=\omega^2g$.
\end{proof}

\subsection{Proper case}
In this subsection, we classify metrics on the universal cover of three-dimensional noncompact proper trans-Sasakian $\eta$-Einstein manifolds.
For this purpose, we first discuss the existence of coordinates suitable for describing the metrics.
It is shown that the characteristic vector field $\tilde{\xi}$ on the universal cover induces a trivial principal $\R$-bundle structure and that we can choose a smooth two-dimensional foliation transverse to the fibers.
We then see that, taking $w$ as the fiber coordinate, we can construct global coordinates $x,y$ on the two-dimensional foliation.
To achieve this, different approaches are needed for the cases \(K\leq0\) and \(K>0\).

\begin{lemma}
\label{lem:canonical-nonpositive}
Let $(M^3,\varphi,\xi,\eta,g)$ be a connected three-dimensional trans-Sasakian $\eta$-Einstein manifold with $\omega\not\equiv0$, complete $\xi$ and $K:=\Ric(\xi,\xi)\leq0$.
For $K=0$, replace $\xi$ if necessary so that $\omega>0$.
Let $\Phi_t:\R\times M\to M$ be the flow of $\xi$.
Define real-valued functions $w$ and $U$ by the following formulas.
\begin{align}
w-\ii U=\rho^{-1},\qquad&(K=0),\\
w+\ii U=\frac{1}{\sqrt{2|K|}}\Log
    \left(\frac{\sqrt{|K|/2}+\rho}{\sqrt{|K|/2}-\rho}\right),\qquad&(K<0).
\label{eq:canonical-nonpositive}
\end{align}
Here, $\Log$ is the principal logarithm on $\C\setminus(-\infty,0]$, with $-\pi<\Im\Log<\pi$.
These functions are smoothly defined on all of $M$ and satisfy
\begin{equation}
 \xi w=1,\qquad \xi U=0,\qquad
 \begin{cases}
 U>0,&K=0,\\[1mm]
 \displaystyle |U|<\frac{\pi}{\sqrt{2|K|}},&K<0
  \end{cases}.
 \label{eq:canonical-nonpositive-properties}
\end{equation}
Set $B:=w^{-1}(0)$.
Then the map
\begin{equation}
 \R\times B\longrightarrow M,\qquad (t,p)\longmapsto\Phi_t(p)
 \label{eq:canonical-global-product}
\end{equation}
is a diffeomorphism, and the flow of $\xi$ is a free and proper $\R$-action.
\end{lemma}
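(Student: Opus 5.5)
The plan has three steps: check that $w$ and $U$ are well defined and smooth, compute $\xi(w+\ii U)$ from the Riccati equation, and then build the global product from the fact that $w$ is a global section function for the flow.

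\emph{Step 1: $w$ and $U$ are smooth on all of $M$.} This follows from Proposition~\ref{prop:complete-riccati-branches}.

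For $K=0$, that proposition says $\rho$ never vanishes and $\omega>0$ everywhere. Hence $\rho^{-1}$ is smooth, and $\Im\rho^{-1}=-\omega/|\rho|^2<0$. Writing $\rho^{-1}=w-\ii U$ therefore gives $U>0$.

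For $K<0$, put $c=\sqrt{|K|/2}$. The proposition says $\rho$ never lies in $(-\infty,-c]\cup[c,\infty)$. I will show that the argument of the logarithm,
\[
\zeta=\frac{c+\rho}{c-\rho},
\]
avoids $(-\infty,0]$. The denominator never vanishes, since $\rho\neq c$. The map $\rho\mapsto\zeta$ is a Möbius transformation, and it sends $\R\cup\{\infty\}$ onto $\R\cup\{\infty\}$. Solving $\zeta\le0$ for real $\rho$ gives exactly $\rho\in(-\infty,-c]\cup(c,\infty)$, together with $\rho=\infty$ for $\zeta=-1$. None of these values is attained, so $\Log\zeta$ is smooth. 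Its imaginary part $\arg\zeta$ lies in $(-\pi,\pi)$, and this gives $|U|<\pi/(2c)=\pi/\sqrt{2|K|}$.

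\emph{Step 2: $\xi w=1$ and $\xi U=0$.} This comes directly from the Riccati equation \eqref{eq:riccati}.

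For $K=0$ we have $\xi(\rho^{-1})=-\rho^{-2}\xi\rho=1$. Since $\rho^{-1}=w-\ii U$, comparing real and imaginary parts gives $\xi w=1$ and $\xi U=0$.

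For $K<0$ the equation reads $\xi\rho=c^2-\rho^2=(c-\rho)(c+\rho)$. Differentiating the logarithm gives
\[
\xi\Log\zeta=\Bigl(\frac{1}{c+\rho}+\frac{1}{c-\rho}\Bigr)\xi\rho=2c .
\]
Dividing by $\sqrt{2|K|}=2c$ yields $\xi(w+\ii U)=1$, so again $\xi w=1$ and $\xi U=0$.

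\emph{Step 3: the global product.} Because $\xi w=1$ and the flow is complete, we get $w(\Phi_t(p))=w(p)+t$ for all $t\in\R$ and $p\in M$.

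Since $\dd w(\xi)=1$, the value $0$ is regular for $w$, so $B=w^{-1}(0)$ is an embedded surface transverse to $\xi$. The inverse of $(t,p)\mapsto\Phi_t(p)$ is the explicit smooth map
\[
x\longmapsto\bigl(w(x),\Phi_{-w(x)}(x)\bigr),
\]
and $\Phi_{-w(x)}(x)$ lies in $B$ by the identity above. Both compositions are the identity, again by that identity, so the map is a diffeomorphism.

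Under this diffeomorphism the flow becomes translation in the $\R$-factor. Translation is free because $w$ strictly increases along orbits, and it is proper because it is translation on a product.

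\emph{Main obstacle.} I expect the only delicate point to be the branch bookkeeping in Step 1 for $K<0$. We must confirm that the excluded set in Proposition~\ref{prop:complete-riccati-branches}(iii) matches exactly the preimage of the branch cut $(-\infty,0]$, including the endpoint $\rho=-c$, which maps to $0$. The proposition excludes the closed half-lines, so this holds. Everything else is routine.
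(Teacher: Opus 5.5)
Your proof is correct and follows the paper's argument step for step. You use Proposition~\ref{prop:complete-riccati-branches} for smoothness and the branch condition (your observation that the M\"obius map preserves $\R\cup\{\infty\}$ plays the role of the paper's formula for $\Im f$), the Riccati equation for $\xi w=1$ and $\xi U=0$, and the identity $w\circ\Phi_t=w+t$ with the explicit inverse $x\mapsto\bigl(w(x),\Phi_{-w(x)}(x)\bigr)$, which simply makes precise the paper's ``flow back for time $-w(p)$'' step.
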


\begin{proof}
For $K=0$, Proposition~\ref{prop:complete-riccati-branches} gives $\rho\ne0$, and noting \eqref{eq:riccati}, we have
\[
 \xi(\rho^{-1})=-\rho^{-2}\xi\rho=1,
  \qquad U=\frac{\omega}{|\rho|^2}>0.
\]
Thus, the functions $w$ and $U$ in the conclusion are globally defined.

For $K<0$, the same proposition shows that $f=\frac{\sqrt{|K|/2}+\rho}{\sqrt{|K|/2}-\rho}$ takes values in $\C\setminus(-\infty,0]$.
Also, since $\Im(f)=\frac{2\sqrt{|K|/2}\,\omega}{|\sqrt{|K|/2}-\rho|^2}$, at points where $f$ takes real values, $\rho$ is also real-valued.
When $\rho$ takes real values, we do not have $|\rho|>\sqrt{|K|/2}$, so $f<0$ does not occur.
Therefore, we can take the principal logarithm smoothly at every point.
Using the Riccati equation $\xi\rho=|K|/2-\rho^2$, we obtain
\begin{align*}
 \xi(f)
 =\frac{(\sqrt{|K|/2}-\rho)+(\sqrt{|K|/2}+\rho)}
          {(\sqrt{|K|/2}-\rho)^2}\,\xi\rho
 =\frac{2\sqrt{|K|/2}}{(\sqrt{|K|/2}-\rho)^2}
       \left(\frac{|K|}{2}-\rho^2\right)
 =\sqrt{2|K|}f.
\end{align*}
Thus, $\xi(w+\ii U)=1$.
The range of the principal logarithm also gives $|U|<\pi/\sqrt{2|K|}$.

In either case, $w(\Phi_t(p))=w(p)+t$ and $U(\Phi_t(p))=U(p)$.
Since $\xi w=1$, $0$ is a regular value of $w$, and $B=w^{-1}(0)$ is a smooth surface.
Moving each $p\in M$ along the flow of $\xi$ for time $-w(p)$ brings it to $B$, so $B$ is nonempty, and we obtain the diffeomorphism \eqref{eq:canonical-global-product}.
In these product coordinates, the flow is $(w,p)\mapsto(w+t,p)$ and is free and proper.
\end{proof}

\begin{lemma}
\label{prop:canonical-return-isometry}
Let $(M^3,\varphi,\xi,\eta,g)$ be a connected three-dimensional proper
trans-Sasakian $\eta$-Einstein manifold with complete $\xi$ and
$K:=\Ric(\xi,\xi)>0$.
Let $\Phi_t$ be the flow of $\xi$, and put $T:=2\pi/\sqrt{2K}$.
Then $\rho\circ\Phi_T=\rho$, and $\Phi_T$ preserves the full trans-Sasakian
structure:
\[
 \Phi_T^*g=g,\qquad \Phi_T^*\eta=\eta,\qquad
 \Phi_{T*}\xi=\xi,\qquad
 \Phi_{T*}\circ\varphi=\varphi\circ\Phi_{T*}.
\]
\end{lemma}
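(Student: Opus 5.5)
Along each integral curve of $\xi$ the function $\rho$ solves the Riccati equation \eqref{eq:riccati} with constant $K>0$, by Lemma~\ref{lem:eta-einstein-rho}. The plan is to show that $\Phi_T$ is a period of $\rho$, then that it preserves $\eta$ and $\xi$ trivially, and finally that it preserves the horizontal metric and $\varphi$.

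\textbf{Periodicity of $\rho$.} Set $c=\sqrt{K/2}$. By Proposition~\ref{prop:complete-riccati-branches}(i), $\omega$ never vanishes, so $\rho\neq\pm\ii c$ is excluded only where needed. More precisely, at any point the solution along the orbit is the Möbius expression
\[
\rho(t)=c\,\frac{\rho_0\cos(ct)-c\sin(ct)}{\rho_0\sin(ct)+c\cos(ct)},
\]
which is the complexified $\tan$-solution. If $\rho_0=\pm\ii c$, the solution is constant. Either way $\rho(t)$ is $\pi/c$-periodic in $t$, and $\pi/c=2\pi/\sqrt{2K}=T$. The denominator never vanishes because $\omega\neq0$. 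Uniqueness of ODE solutions then gives $\rho\circ\Phi_T=\rho$.

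\textbf{Preservation of $\eta$ and $\xi$.} Since $\mathcal L_\xi\eta=\iota_\xi\dd\eta=0$ (recall $\dd\eta=-2\ii\omega\mu\wedge\bar\mu$), we get $\Phi_T^*\eta=\eta$. Also $\Phi_{T*}\xi=\xi$ trivially.

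\textbf{Preservation of the horizontal metric.} By Proposition~\ref{prop:normal perp metric lie derivative}, $\mathcal L_\xi(g-\eta^2)=2\Theta(g-\eta^2)$. Integrating along the flow gives
\[
\Phi_T^*(g-\eta^2)=\exp\Bigl(2\int_0^T\Theta\circ\Phi_s\,\dd s\Bigr)(g-\eta^2).
\]
The imaginary part of the Riccati equation gives $\xi\omega=-2\Theta\omega$. Hence $2\int_0^T\Theta\circ\Phi_s\,\dd s=-\log\bigl(\omega\circ\Phi_T/\omega\bigr)=0$ by periodicity of $\rho$, since $\omega$ is nowhere zero. So $\Phi_T^*g=g$.

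\textbf{Preservation of $\varphi$.} $\Phi_T$ is then an isometry fixing $\xi$. It preserves orientation, since it is isotopic to the identity through $\Phi_{sT}$. The horizontal complex structure $\varphi|_D$ is determined by $g|_D$ and the orientation of $D$ induced by $\xi$: it is rotation by $+\pi/2$, or equivalently it is fixed by $\dd\eta(X,\varphi X)$ having the sign of $\omega$. Hence $\varphi$ commutes with $\Phi_{T*}$.

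\textbf{Main obstacle.} The delicate step is justifying the periodicity globally, i.e.\ checking that the Möbius solution with $\Im\rho_0\neq0$ never blows up, so the formula holds for all $t$. This follows from $\Im\rho\neq0$ being preserved, which is part (i) of Proposition~\ref{prop:complete-riccati-branches}. The cancellation of the conformal factor via $\xi\omega=-2\Theta\omega$ is the key identity that makes the conformal flow an isometry after one period.
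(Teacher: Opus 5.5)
Your proof is correct and follows essentially the same route as the paper. The steps match: periodicity of $\rho$ from the explicit solution of the Riccati equation, then $\mathcal L_\xi\eta=0$, the identity $\Phi_t^*(g-\eta^2)=\frac{\omega}{\omega\circ\Phi_t}(g-\eta^2)$ obtained from $\xi\omega=-2\Theta\omega$, and the orientation argument for $\varphi$. The only difference is presentational: the paper linearizes the Riccati equation with the Cayley variable $q=(\rho-\ii\sqrt{K/2})/(\rho+\ii\sqrt{K/2})$, which satisfies $\xi q=-\ii\sqrt{2K}\,q$ and so makes your check that the denominator never vanishes unnecessary.

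One harmless slip: with the paper's conventions $\varphi\partial=\ii\partial$ and $\dd\eta=-2\ii\omega\,\mu\wedge\bar\mu$, one gets $\dd\eta(X,\varphi X)=-4\omega|\mu(X)|^2$, so the sign is that of $-\omega$, not $\omega$. Your argument only needs the sign to be a fixed function of $\omega$, so it is unaffected.
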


\begin{proof}
Proposition~\ref{prop:complete-riccati-branches} shows that $\omega$ is
nowhere zero, so we may assume $\omega>0$ after reversing $(\xi,\eta)$
if necessary.
Define the globally smooth function
\[
 q:=\frac{\rho-\ii\sqrt{K/2}}{\rho+\ii\sqrt{K/2}}.
\]
The denominator does not vanish because $\omega>0$.
The Riccati equation~\eqref{eq:riccati} gives
\[
 \xi q
 =\frac{\ii\sqrt{2K}}{(\rho+\ii\sqrt{K/2})^2}\,\xi\rho
 =-\ii\sqrt{2K}\,q.
\]
Consequently $q\circ\Phi_t=\e^{-\ii\sqrt{2K}t}q$, and hence
$q\circ\Phi_T=q$.
The inverse relation
$\rho=\ii\sqrt{K/2}(1+q)/(1-q)$ implies $\rho\circ\Phi_T=\rho$.
This calculation uses only the Riccati equation, not the existence of a
global orbit space or a normal form.

The identity $\Phi_{t*}\xi=\xi$ follows from the flow property.
Since $\iota_\xi\dd\eta=0$ and $\eta(\xi)=1$, Cartan's formula gives
$\Lie_\xi\eta=0$ and therefore $\Phi_t^*\eta=\eta$ for all $t\in\R$.
Put $h:=g-\eta^2$.
By \eqref{eq:lie-g} and the imaginary part of \eqref{eq:riccati},
\[
 \frac{\dd}{\dd t}\Phi_t^*h
 =2(\Theta\circ\Phi_t)\Phi_t^*h,\qquad
 \frac{\dd}{\dd t}(\omega\circ\Phi_t)
 =-2(\Theta\circ\Phi_t)(\omega\circ\Phi_t).
\]
Thus
\[
 \Phi_t^*h=\frac{\omega}{\omega\circ\Phi_t}\,h.
\]
Since $\omega\circ\Phi_T=\omega$, we obtain $\Phi_T^*h=h$ and
$\Phi_T^*g=g$.
Finally, $\Phi_T$ preserves the orientation determined by the ACM structure.
The endomorphism $\varphi$ is uniquely determined by the metric, $\xi$,
and the orientation of $\xi^\perp$, so $\Phi_{T*}$ commutes with $\varphi$.
\end{proof}

\begin{lemma}
\label{prop:averaged-invariant-metric}
Let $(M^3,\varphi,\xi,\eta,g)$ be a connected three-dimensional proper
trans-Sasakian $\eta$-Einstein manifold with complete $\xi$ and
$K:=\Ric(\xi,\xi)>0$.
Let $\Phi_t$ be its characteristic flow, put $T:=2\pi/\sqrt{2K}$, and define
\begin{equation}
 \bar g:=\frac1T\int_0^T\Phi_t^*g\,\dd t.
 \label{eq:averaged-metric}
\end{equation}
Then $\bar g$ is a smooth Riemannian metric for which $\xi$ is Killing,
and $\bar g(\xi,\cdot)=\eta$.
\end{lemma}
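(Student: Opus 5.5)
The plan is to use Lemma~\ref{prop:canonical-return-isometry}, which gives $\Phi_T^*g=g$, so that $t\mapsto\Phi_t^*g$ is $T$-periodic. Everything then follows from standard averaging over a period.

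\emph{Smoothness and positivity.} The map $(t,p)\mapsto(\Phi_t^*g)_p$ is smooth on $\R\times M$, since the flow of the complete field $\xi$ is smooth. Integrating over the compact interval $[0,T]$ therefore yields a smooth symmetric $2$-tensor $\bar g$; this is differentiation under the integral sign. For positivity, take $X\in T_pM$ with $X\ne0$. Each $(\Phi_t^*g)_p(X,X)=g(\Phi_{t*}X,\Phi_{t*}X)$ is positive because $\Phi_{t*}$ is an isomorphism. The integrand is continuous in $t$, so its average is positive, and $\bar g$ is Riemannian.

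\emph{Killing property.} Using the group law $\Phi_s^*\Phi_t^*=\Phi_{s+t}^*$,
\[
\Phi_s^*\bar g=\frac1T\int_s^{s+T}\Phi_u^*g\,\dd u .
\]
By $T$-periodicity of $u\mapsto\Phi_u^*g$ (from $\Phi_T^*g=g$), the integral over any interval of length $T$ is the same. Hence $\Phi_s^*\bar g=\bar g$ for all $s$, and differentiating at $s=0$ gives $\Lie_\xi\bar g=0$.

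\emph{Identity $\bar g(\xi,\cdot)=\eta$.} Compute
\[
(\Phi_t^*g)(\xi,Y)=g(\Phi_{t*}\xi,\Phi_{t*}Y)=g(\xi,\Phi_{t*}Y)=\eta(\Phi_{t*}Y)=(\Phi_t^*\eta)(Y)=\eta(Y).
\]
This uses $\Phi_{t*}\xi=\xi\circ\Phi_t$ and $\Phi_t^*\eta=\eta$, the latter holding for all $t$ as shown in the proof of Lemma~\ref{prop:canonical-return-isometry}. Averaging over $t\in[0,T]$ gives $\bar g(\xi,\cdot)=\eta$, and in particular $\bar g(\xi,\xi)=1$.

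No step is a real obstacle. The only point needing care is the periodicity $\Phi_{u+T}^*g=\Phi_u^*\Phi_T^*g=\Phi_u^*g$, which is exactly the content of the preceding lemma; without it the averaged metric would not be flow-invariant.
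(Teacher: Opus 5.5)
Your proposal is correct and follows essentially the same argument as the paper. You use the $T$-periodicity of $t\mapsto\Phi_t^*g$ from Lemma~\ref{prop:canonical-return-isometry}, then the shift identity $\Phi_s^*\bar g=\frac1T\int_s^{s+T}\Phi_t^*g\,\dd t=\bar g$, and finally the average of $(\Phi_t^*g)(\xi,\cdot)=\Phi_t^*\eta=\eta$; you simply write out the smoothness and positivity details that the paper states in one line.
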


\begin{proof}
Proposition~\ref{prop:canonical-return-isometry} makes the integrand
in \eqref{eq:averaged-metric} $T$-periodic.
Hence, for every $s\in\R$,
\[
 \Phi_s^*\bar g
 =\frac1T\int_s^{s+T}\Phi_t^*g\,\dd t
 =\bar g.
\]
The average of smooth positive-definite tensors is smooth and positive
definite.
Moreover,
\[
 (\Phi_t^*g)(\xi,\cdot)
 =\Phi_t^*(g(\xi,\cdot))=\Phi_t^*\eta=\eta,
\]
so $\bar g(\xi,\cdot)=\eta$.
These assertions require only completeness of $\xi$.
\end{proof}

\begin{lemma}
\label{lem:simply-connected-open-killing-proper}
Let $(N^3,h)$ be a connected, simply connected, noncompact Riemannian manifold.
Every complete nowhere-vanishing Killing vector field $X$ generates a free proper $\R$-action.
\end{lemma}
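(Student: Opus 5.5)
The plan is to pass to the isometry group and use properness of its action. By Myers--Steenrod, $\Isom(N,h)$ is a Lie group acting properly on $N$. Since $X$ is complete and Killing, its flow $\Phi_t$ is a one-parameter subgroup $\R\to\Isom(N,h)$. Let $G$ be the closure of $\{\Phi_t\}$. Then $G$ is a connected abelian Lie group, so $G\cong\T^k\times\R^l$. I split into two cases according to whether $t\mapsto\Phi_t$ is a closed embedding.

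\emph{Case 1: $t\mapsto\Phi_t$ is injective with closed image.} The image is a closed subgroup isomorphic to $\R$. A closed subgroup of a group acting properly still acts properly, so the $\R$-action is proper. For freeness, the stabilizer of any $p$ is compact because the action is proper. It is also a closed subgroup of $\R$, hence it is $\{0\}$, $c\Z$ or $\R$. Only $\{0\}$ is compact, so the action is free. This case is routine.

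\emph{Case 2: otherwise.} Then either $\Phi_T=\id$ for some $T>0$, or the image is not closed. In the second situation, $G$ contains a torus factor in which the line is dense (a one-parameter subgroup whose closure has $l\ge1$ is already closed). In both situations $G$ contains a torus $\T^k$ with $k\ge1$. I want to derive a contradiction from simple connectivity and noncompactness.

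First treat $k=1$, i.e.\ $\Phi$ is periodic. Then $X$ generates an $S^1$-action. It is locally free since $X$ never vanishes, so it is a Seifert fibration $N\to B$ over a $2$-orbifold $B$, which is noncompact because the fibres are compact and $N$ is not. The orbifold exact sequence $\pi_1(S^1)\to\pi_1(N)\to\pi_1^{\mathrm{orb}}(B)\to1$ together with $\pi_1(N)=1$ forces $\pi_1^{\mathrm{orb}}(B)=1$. A noncompact $2$-orbifold is good, so its universal orbifold cover is a manifold. Triviality of $\pi_1^{\mathrm{orb}}$ then means $B$ has no cone points and $B\cong\R^2$. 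The bundle is therefore a trivial circle bundle $N\cong S^1\times\R^2$, whose fibre generates $\pi_1\cong\Z$ since $\pi_2(\R^2)=0$. This contradicts simple connectivity.

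For $k\ge2$ I would reduce to the $k=1$ argument. All isotropy groups $G_p$ are closed, and their Lie algebras do not contain $X$ because $X$ is nowhere zero. I would try to choose a rational direction $Y\in\mathrm{Lie}(\T^k)$ that avoids the Lie algebras of all isotropy subgroups. The circle generated by $Y$ then acts locally freely, and the Seifert argument above applies verbatim. Here $\dim N=3$ helps: effective $\T^k$ forces $k\le3$, and the isotropy Lie algebras come from a finite list of orbit types on each compact set. This step is the main obstacle, because on a noncompact $N$ there may be infinitely many isotropy types. If that happens, I would instead use a slice argument near a minimal-dimensional orbit: either find a point fixed by a circle in $\T^k$ along a direction that $X$ approximates, which would force $X$ to vanish somewhere by density of the flow in $G$, or again build a locally free circle action. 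Either outcome gives a contradiction, which finishes the proof.
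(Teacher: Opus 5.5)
The gap is in the case $k\ge2$. Your Case~1 and your periodic case ($k=1$) are correct. The latter is heavier than needed (Seifert fibrations, the orbifold exact sequence, goodness of noncompact $2$-orbifolds). You should also add that an effective locally free circle action whose quotient has no cone points is free.

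\textbf{The gap.} The missing case is when $\{\Phi_t\}$ is a dense irrational line in $G=\T^k$ with $k\ge2$. This is exactly the situation that must be ruled out, and it really occurs on the non-simply-connected quotients of types (IR$_0$) and (IR$_1$) later in the paper. You reduce it to finding a rational $Y\in\mathrm{Lie}(\T^k)$ outside every isotropy Lie algebra, but you never show that such a $Y$ exists. As you note yourself, local finiteness of orbit types on compact sets gives no global control on a noncompact $N$.

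Your fallback does not repair this. If a circle $\exp(\R Y)$ with $Y$ rational fixes $p$, then still $X(p)\neq0$. Indeed $X(p)=0$ if and only if the irrational direction $u$ of $X$ lies in $\mathfrak g_p$, and a rational line never contains $u$, however well $Y$ approximates it. Density of the flow in $G$ only says that the $X$-orbit of $p$ is dense in $G\cdot p$. A limiting argument ($Y_n\to u$, $p_n\to p$, hence $\exp(tu)p=p$) needs the points $p_n$ to accumulate, which is precisely what noncompactness denies you. In (IR$_1$) there is a singular circle orbit with $X\neq0$ on it, so the contradiction has to come from $\pi_1(N)=1$, not from a zero of $X$.

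\textbf{How to close it along your route.} You need the orbit-type analysis in dimension three.
\begin{itemize}
\item $k=3$ is impossible: principal orbits would be open and compact, so $N$ would be a single compact orbit.
\item For $k=2$, the principal isotropy group is trivial, since $G$ is abelian and effective.
\item There are no fixed points, because $\T^2$ does not embed in $\mathrm{SO}(3)$.
\item There are no exceptional $2$-dimensional orbits (codimension one plus orientation).
\end{itemize}
So the singular orbits are circles, corresponding to boundary points of $N/\T^2$. This is a connected noncompact $1$-manifold with boundary, hence $\R$ or $[0,\infty)$. Thus at most one isotropy line occurs. Any rational $Y$ off that line generates a locally free circle, and your $k=1$ argument then applies.

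\textbf{What the paper does instead.} It avoids all of this with one argument valid for every compact closure $H$. Put $\alpha=h(X,\cdot)/h(X,X)$, so $\iota_X\dd\alpha=0$. Since $H^3_{\mathrm{dR}}(N)=0$, write $\alpha\wedge\dd\alpha=\dd\beta$, average $\beta$ over $H$ to get $\bar\beta$, and set $\theta=\alpha+\iota_X\bar\beta$. Then $\dd\theta=0$ and $\theta(X)=1$, so $\theta=\dd f$ by simple connectivity. Hence $f$ increases linearly along an orbit lying in the compact set $H\cdot p$, a contradiction. This treats the periodic and irrational cases simultaneously and needs no orbifold or orbit-type theory.
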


\begin{proof}
Let $\Psi_t$ be the complete flow of $X$, and put $L:=\{\Psi_t:t\in\R\}\subset\Isom(N,h)$ and $H:=\overline L\subset\Isom(N,h)$.
$\Isom(N,h)$ acts properly on $N$ \cite[Corollary~1.4]{Manoussos2010}.
The closed subgroup $H$ therefore also acts properly.
Since $H$ is connected and abelian, $H\cong\R^a\times\T^b$.
The one-parameter subgroup has the form $\Psi_t=(tu,[tv]),\ (u\in\R^a,\ v\in\R^b)$.

We show that if $H$ is noncompact, then the action of $L$ is free and proper.
Since $H$ is noncompact, $a>0$, and the density of $L$ in $H$ implies $u\ne0$.
Thus $t\mapsto\Psi_t$ is a proper injective homomorphism with closed image isomorphic to $\R$.
The restriction of the proper $H$-action to this closed subgroup is proper.
Its stabilizers are compact, and $\R$ has no nontrivial compact subgroup, so the action is free.

We exclude compactness of $H$.
Suppose that $H$ is compact, and define the one-form
\[
 \alpha:=\frac{h(X,\cdot)}{h(X,X)}.
\]
We have $\alpha(X)=1,\ \Lie_X\alpha=0,\ \iota_X\dd\alpha=0$.
Every element of $H$ is an isometry commuting with the flow of $X$,
and therefore preserves both $X$ and $\alpha$.
Since $N$ is a connected noncompact three-manifold without boundary, we have $H^3_{\mathrm{dR}}(N)=0$.
We assume $\dd\alpha\ne0$.
The three-form $\alpha\wedge\dd\alpha$ is closed, so there is a smooth two-form $\beta$ such that $\dd\beta=\alpha\wedge\dd\alpha$.
Since $H$ is compact, we can take a Haar measure $\mu$ on $H$ such that $\mu(H)=1$.
We define
\[
 \bar\beta:=\int_H k^*\beta\,\dd\mu(k).
\]
Then $\bar\beta$ is smooth and $H$-invariant.
Thus, we have $\dd\bar\beta=\alpha\wedge\dd\alpha,\ \Lie_X\bar\beta=0$.
Set $\theta:=\alpha+\iota_X\bar\beta$.
Then we have $ \dd\theta=\dd\alpha+\Lie_X\bar\beta-\iota_X\dd\bar\beta =\dd\alpha-\iota_X(\alpha\wedge\dd\alpha)=0$ and $\theta(X)=1$.
If $\dd\alpha=0$, the same formula holds for $\theta:=\alpha$.
Simple connectedness gives $\theta=\dd f$ for some
$f\in C^\infty(N,\R)$.
Thus, we have $X(f)=1$.
Therefore, $f(\Psi_t(p))=f(p)+t\ (t\in\R)$.
This is impossible because the orbit is contained in the compact set $H\cdot p$, on which $f$ is bounded.
\end{proof}

\begin{proposition}
\label{prop:orbit-surface}
Let $(M^3,\varphi,\xi,\eta,g)$ be a connected, simply connected, noncompact three-dimensional proper trans-Sasakian $\eta$-Einstein manifold with complete $\xi$.
Then the flow of $\xi$ defines a free and proper $\R$-action.
The quotient $B:=M/\R$ is a simply connected noncompact oriented smooth surface, and the quotient map defines a trivial principal bundle $\R\to M\to B$.
In particular, $B\cong\R^2,\ M\cong B\times\R\cong\R^3$.
The horizontal conformal class descends to a conformal structure on $B$,
whose conformal type is $\D$ or $\C$.
\end{proposition}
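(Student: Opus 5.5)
The proof splits according to the sign of $K:=\Ric(\xi,\xi)$, which is constant by Lemma~\ref{lem:eta-einstein-rho} because the structure is proper.

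\textbf{Case $K\le0$.} Lemma~\ref{lem:canonical-nonpositive} already gives a global function $w$ with $\xi w=1$. It also gives the diffeomorphism $\R\times B\to M$ with $B=w^{-1}(0)$, and shows that the flow is a free proper $\R$-action.

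\textbf{Case $K>0$.} Lemma~\ref{prop:averaged-invariant-metric} gives the averaged metric $\bar g$. For $\bar g$ the field $\xi$ is Killing, complete, and nowhere vanishing, and $M$ is connected, simply connected and noncompact. Lemma~\ref{lem:simply-connected-open-killing-proper} then shows that the flow of $\xi$ is a free proper $\R$-action.

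In both cases the free proper action of the Lie group $\R$ makes $M/\R=B$ a smooth (Hausdorff) surface. The quotient map $M\to B$ is a principal $\R$-bundle.

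\textbf{Triviality and topology of $B$.} Since the structure group $\R$ is contractible, the bundle admits a global section, so it is trivial and $M\cong B\times\R$. In particular $B$ is homotopy equivalent to $M$, hence connected and simply connected. $B$ is noncompact, since otherwise $M\cong B\times\R$ would be noncompact only along the fiber, while $B$ compact and simply connected forces $B\cong S^2$. That case I exclude by showing $M\cong S^2\times\R$ is impossible. The cleanest route is the uniformization step below, which rules out $S^2$. Alternatively, with $K\le0$ one can note that $U$ descends to $B$ (since $\xi U=0$); for $K>0$ one can use the one-form argument. I expect this exclusion of $S^2$ to be the main obstacle.

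\textbf{Orientation of $B$.} $M$ is oriented via the ACM structure (orientation of $\xi^\perp$ fixed by $\varphi$). The flow preserves $\varphi$ up to the conformal factor, so $B$ inherits an orientation.

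\textbf{Conformal structure on $B$.} By Proposition~\ref{prop:normal perp metric lie derivative}, $\mathcal L_\xi(g-\eta^2)=2\Theta(g-\eta^2)$. Hence the conformal class of $h=g-\eta^2$ on $\ker\eta$ is flow-invariant. Identify $T_{[p]}B$ with $\xi^\perp_p$ via the projection. Different points of the same orbit give conformally equivalent inner products, because the flow maps one to the other up to scale. Thus a conformal structure, i.e.\ a Riemann surface structure, is well defined on $B$. Equivalently, $\varphi$ descends to a complex structure $J$ on $B$, since $\Lie_\xi\varphi=0$ by normality.

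\textbf{Ruling out $S^2$ and conclusion.} By uniformization, a simply connected Riemann surface is $\D$, $\C$ or $\widehat{\C}$. To exclude $\widehat\C$: if $B\cong S^2$, then $M\cong S^2\times\R$ with trivial bundle. The curvature of the connection $\eta$ is $\dd\eta=-2\ii\omega\mu\wedge\bar\mu$, which is a nowhere-vanishing basic-type $2$-form when $\omega\neq0$. However, triviality with $\eta=\dd w+\pi^*\alpha$ for a global section $w$ forces $\dd\eta$ to be exact on $B$, and it is not positive there. So $\int_B\dd\eta$ is both $0$ and nonzero, a contradiction. This argument works when $\omega>0$ everywhere ($K\ge0$). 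In the mixed case $K<0$, I would use $U$ on $B$ instead: $\Delta_0U$ has the sign of $\sin$ of $U$, which contradicts the maximum principle on compact $B$ unless $U\equiv0$, and $U\equiv0$ forces $\omega\equiv0$. Therefore $B$ is noncompact, $B\cong\D$ or $\C$, hence $B\cong\R^2$ and $M\cong\R^3$.
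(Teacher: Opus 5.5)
Your proof is correct. For the free proper action, the triviality of the bundle, the Stokes argument for $K\ge0$ and the descent of the horizontal conformal class, it coincides with the paper's proof. Two wording fixes: in the Stokes step, ``it is not positive there'' should read ``it is nowhere vanishing'', so that $\dd\gamma$ is a volume form on a compact $B$. Also, the early remark that noncompactness of $M$ forces noncompactness of $B$ carries no content, since $B\times\R$ is always noncompact; the real work is the exclusion argument you give later.

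The one genuinely different step is noncompactness of $B$ when $K<0$.

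The paper works on $M$. It uses $\Delta_g\omega=0$ from Lemma~\ref{lem:eta-einstein-rho} and reads off $|\omega(w,p)|\le|\omega(0,p)|$ from the explicit formula. Hence an extremum of $|\omega|$ on a compact $B$ is a global extremum of $\omega$ on $M$, and the strong maximum principle applies.

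You work on $B$ with $U$, but you only assert the identity you need. It can be derived as follows.
\begin{itemize}
\item Since $w+\ii U$ is a holomorphic function of $\rho$ and $\partial\rho=0$, we get $\partial(w+\ii U)=0$.
\item In local isothermal coordinates oriented so that $\varphi$ covers $J\partial_x=\partial_y$, this gives $\alpha=-*_0\dd U$.
\item Comparing $\dd\eta=-\Delta_0U\,\dd x\wedge\dd y$ with $\dd\eta=-2\ii\omega\,\mu\wedge\bar\mu=-2A\omega\,\dd x\wedge\dd y$ yields $\Delta_0U=2A\omega$.
\item Hence $\Delta_0U$ has the sign of $\sin(\sqrt{2|K|}U)$, and therefore of $U$.
\end{itemize}
You should write this out, because the sign is the whole point. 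With the opposite sign, which is what the $w-\ii U$ convention produces for $K=0$, the extremum test would only give $\max U\ge0\ge\min U$, which is no contradiction. This is the computation of case (ii) of Theorem~\ref{prop:Standard eta-Einstein metrics}. It is purely local, so using it before that theorem is not circular.

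With the identity in hand, your route is more elementary. At a maximum of $U$ on a compact $B$ you get $U\le0$, and at a minimum $U\ge0$, so $U\equiv0$ and hence $\omega\equiv0$. This uses only the second-derivative test, with no strong maximum principle and no transfer of extrema from $B$ to $M$. It also mirrors the argument the paper later uses to exclude $B=\C$. The paper's route instead reuses the harmonicity of $\omega$ it has already proved, at the cost of the extra extremum-transfer step.
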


\begin{proof}
Lemma~\ref{lem:eta-einstein-rho} shows that $K=\Ric(\xi,\xi)$ is constant.
If $K\leq0$, apply Lemma~\ref{lem:canonical-nonpositive} directly to the lifted structure.
It gives a free proper $\R$-action on $M$.
If $K>0$, Proposition~\ref{prop:averaged-invariant-metric} gives a smooth invariant metric $\bar g$ on $M$.
$\xi$ is a complete nowhere-vanishing Killing field with respect to $\bar g$.
Lemma~\ref{lem:simply-connected-open-killing-proper} therefore shows that its flow is free and proper.

Since the action is free and proper, the orbit space $B$ is a smooth
surface and $M\to B$ is a principal $\R$-bundle
\cite[Theorem~1.6.3]{Tammo}.
The homotopy exact sequence and contractibility of the fiber give
$\pi_1(B)\cong\pi_1(M)=0$.
In particular, $B$ is orientable.
The bundle admits a smooth global section and is trivial
\cite[Chapter~I, Theorem~5.7]{KobayashiNomizu}.

We prove that $B$ is noncompact.
First suppose that $K=\Ric(\xi,\xi)\geq0$, that is the structure has rank three.
In a principal-bundle trivialization $M\cong B\times\R$, write $\xi=\partial_t$.
The identities $\eta(\xi)=1$ and $\Lie_\xi\eta=0$ imply $ \eta=\dd t+\gamma,\ \gamma\in\Omega^1(B)$.
The rank-three condition implies that $\dd\gamma$ is a nowhere-vanishing two-form on $B$.
If $B$ were compact, orienting it by $\dd\gamma$ would give $ 0<\int_B\dd\gamma=0$, where the last equality is Stokes' theorem.
Thus $B$ is noncompact.

Next, suppose that $K<0$.
By Lemma~\ref{lem:canonical-nonpositive}, the globally defined functions $w,U$ satisfy \eqref{eq:canonical-nonpositive}.
We now use the canonical trivialization supplied by that lemma,
identifying $B$ with $w^{-1}(0)$.
Thus $M\cong\R\times B$, $\xi=\partial_w$, and $U$ is a function on $B$.
We can calculate $\omega$ as follows:
\begin{align}
&\frac{\sqrt{|K|/2}+\rho}{\sqrt{|K|/2}-\rho} = \e^{\sqrt{2|K|}(w+\ii U)},\nonumber\\
\rho
 &=
 \sqrt{\frac{|K|}{2}}\,
 \frac{\e^{\sqrt{2|K|}(w+\ii U)}-1}
      {\e^{\sqrt{2|K|}(w+\ii U)}+1}
 =
 \sqrt{\frac{|K|}{2}}\,
 \frac{\e^{2\sqrt{2|K|}w}-1
       +2\ii\e^{\sqrt{2|K|}w}\sin(\sqrt{2|K|}U)}
      {\e^{2\sqrt{2|K|}w}+1
       +2\e^{\sqrt{2|K|}w}\cos(\sqrt{2|K|}U)}\nonumber\\
 &=
 \sqrt{\frac{|K|}{2}}\,
 \frac{\sinh(\sqrt{2|K|}w)+\ii\sin(\sqrt{2|K|}U)}
      {\cosh(\sqrt{2|K|}w)+\cos(\sqrt{2|K|}U)},\label{eq:rho K negative}\\
\omega(w,p)
 &=
 \sqrt{\frac{|K|}{2}}\,
 \frac{\sin(\sqrt{2|K|}\,U(p))}
      {\cosh(\sqrt{2|K|}\,w)+\cos(\sqrt{2|K|}\,U(p))},
 \qquad p\in B.\nonumber
\end{align}
The bound $|\sqrt{2|K|}\,U(p)|<\pi$ implies, for each fixed $p$, the denominator of $\omega(w,p)$ is minimized at $w=0$.
Consequently, we have $|\omega(w,p)|\leq|\omega(0,p)|$.

Suppose that $B$ is compact.
The restriction $\omega|_B$ attains a maximum or minimum at some point $p_0\in B$.
Therefore, $\omega$ also attains a maximum or minimum at $p_0$.
Lemma~\ref{lem:eta-einstein-rho} gives $\Delta_g\omega=0$.
The strong maximum principle therefore implies that $\omega$ is constant, a contradiction.
Hence $B$ is noncompact.

Since $B$ is a simply connected noncompact smooth surface,
it is diffeomorphic to $\R^2$.
The bundle trivialization then gives
$M\cong B\times\R\cong\R^3$.

Finally, $\Lie_\xi\eta=0$ makes the horizontal distribution
flow-invariant, and \eqref{eq:lie-g} shows that its metric changes
conformally along each fiber.
Its conformal class therefore descends to $B$.
Together with the induced orientation, this defines a Riemann surface
structure on $B$.
The uniformization theorem gives conformal type $\D$ or $\C$.
\end{proof}

We now define the model spaces.
Let $K\in\R$ be a constant, and let $\D=\{z\in\C:|z|<1\}$ be the unit disk with its standard complex structure.
Let $z=x+\ii y$ and $\Delta_0=\partial_x^2+\partial_y^2$.
An \textbf{admissible pair} for $K>0$ is a pair
$(v,s)\in C^\infty(\D,\R)\times\mathcal O(\D)$ satisfying
\begin{equation}
 \Delta_0v>0,\qquad e^v|s|<1,\qquad s\not\equiv0.
 \label{eq:admissible-positive}
\end{equation}
An \textbf{admissible potential} for $K=0$ is a function
$U\in C^\infty(\D,\R)$ satisfying
\begin{align}
 U>0,\qquad \Delta_0U<0.
 \label{eq:admissible-zero}
\end{align}
An \textbf{admissible potential} for $K<0$ is a function
$U\in C^\infty(\D,\R)$ satisfying
\begin{align}
 U\not\equiv0,\qquad |U|<\frac{\pi}{\sqrt{2|K|}},
\end{align}
and for which there exists a function $H_U\in C^\infty(\D,(0,\infty))$ satisfying
\begin{align}
 \Delta_0U=\sqrt{2|K|}\,H_U\sin(\sqrt{2|K|}U).
 \label{eq:admissible-negative-variable-rank}
\end{align}
We denote the respective sets of data by $\mathcal U_K^{\mathrm{pair}}$ for $K>0$
and $\mathcal U_K^{\mathrm{pot}}$ for $K\leq0$.

\begin{remark}
In the $K<0$ case, let $U\in C^\infty(\D,\R)$ be an admissible potential.
By definition,
\begin{equation}
 H_U=\frac{\Delta_0U}{\sqrt{2|K|}\sin(\sqrt{2|K|}U)}
\end{equation}
is smooth even at the zeros of $U$, but this is not true for arbitrary $U$.
Thus, \eqref{eq:admissible-negative-variable-rank} may also be regarded as the condition that there exists a smooth $H_U$ defined by this quotient.

Such an $H_U$ is also unique if it exists.
Indeed, fix a given pair of functions \(U,H_U\), and define $V(z):=2|K|H_U(z)\int_0^1\cos\!\left(t\sqrt{2|K|}\,U(z)\right)\,\dd t$.
This is a smooth function on \(\D\).
Then we have
\[
\begin{aligned}
V(z)U(z)
&=\sqrt{2|K|}H_U(z)\int_0^1
\frac{\dd}{\dd t}\sin\!\left(t\sqrt{2|K|}\,U(z)\right)\,\dd t\\
&=\sqrt{2|K|}H_U(z)\sin\!\left(\sqrt{2|K|}\,U(z)\right)
=\Delta_0U(z).
\end{aligned}
\]
Consider the linear elliptic equation $\Delta_0u-Vu=0$ for an unknown function \(u\).
By the above calculation, \(U\) is a solution of this equation.
If \(U\) is identically zero on a nonempty open set, then Aronszajn's unique continuation theorem~\cite{Aronszajn1957} gives \(U\equiv0\) on the whole connected domain \(\D\).
Therefore, if \(U\not\equiv0\), the zero set contains no nonempty open set, and \(\{U\neq0\}\) is dense in \(\D\).
\end{remark}

Take an admissible pair $(v,s)$ if $K>0$, or an admissible potential $U$ if $K\leq0$, on the unit disk $\D$.
On $\D_z\times\R_w$, we define the one-form $\eta_K$ as
\begin{equation}
 \eta_K=
 \begin{cases}
  \dd w-(2K)^{-1/2}*_0\dd v,&K>0,\\
  \dd w+*_0\dd U,&K=0,\\
  \dd w-*_0\dd U,&K<0.
 \end{cases}
 \label{eq:model-eta}
\end{equation}

On $\D_z\times\R_w$, we define the metrics as follows.
If $K>0$,
\begin{equation}
 \begin{aligned}
 &g_K=A_{K,v,s}|\dd z|^2+\eta_K^2,\\
 &A_{K,v,s}=\frac{\Delta_0v}{2K}\frac{|1-q|^2}{1-|q|^2},
 \qquad q=e^{v-\ii\sqrt{2K}w}\overline{s}.
 \end{aligned}
 \label{eq:model-positive}
\end{equation}
If $K=0$,
\begin{equation}
\begin{aligned}
&g_0=A_{0,U}|\dd z|^2+\eta_0^2,\\
&A_{0,U}=-\frac{\Delta_0U}{2U}(w^2+U^2),
\end{aligned}
\label{eq:model-zero}
\end{equation}
If $K<0$,
\begin{equation}
 \begin{aligned}
 &g_K=A_{K,U}|\dd z|^2+\eta_K^2,\\
 &A_{K,U}=H_U\{\cosh(\sqrt{2|K|}w)+\cos(\sqrt{2|K|}U)\}.
 \end{aligned}
 \label{eq:model-negative}
\end{equation}
Set $\xi=\partial_w$ and let $\varphi_K$ be the horizontal lift of the
standard complex structure, with $\varphi_K\xi=0$.
We denote these ACM models by $\mathcal M_K(v,s)$ for $K>0$ and
$\mathcal M_K(U)$ for $K\leq0$.

\begin{screen}
\begin{theorem}
\label{prop:Standard eta-Einstein metrics}
Let $(M^3,\varphi,\xi,\eta,g)$ be a connected, simply connected, noncompact three-dimensional proper trans-Sasakian $\eta$-Einstein manifold with complete $\xi$.
If $K\geq0$, replace $(\xi,\eta)$ by $(-\xi,-\eta)$ if necessary so that $\omega>0$.
Then the structure is isomorphic to $\mathcal M_K(v,s)$ for $K>0$, or to $\mathcal M_K(U)$ for $K\leq0$, with some admissible data.
In particular, $M\cong\D\times\R\cong\R^3$.
Conversely, every such model is proper trans-Sasakian $\eta$-Einstein with $\Ric(\xi,\xi)=K$.
\end{theorem}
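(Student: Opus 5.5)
Using Proposition~\ref{prop:orbit-surface}, we write $M\cong B\times\R$ with $\xi=\partial_w$ and $B\cong\R^2$. We also write $\eta=\dd w+\gamma$ with $\gamma\in\Omega^1(B)$. By Proposition~\ref{prop:normal perp metric lie derivative} the horizontal metric has the form $g-\eta^2=A(z,w)|\dd z|^2$, where $z$ is a global isothermal coordinate on $B$. Uniformization gives $B\cong\D$ or $\C$. The plan is to show that $\rho$ (equivalently $\omega$) is an explicit function of $w$ and of holomorphic/harmonic-type data on $B$. Then $A$ follows from the relation $\Phi_t^*h=(\omega/\omega\circ\Phi_t)h$ found in the proof of Lemma~\ref{prop:canonical-return-isometry}, namely $A(z,w)=A(z,0)\,\omega(z,0)/\omega(z,w)$. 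Finally, $\gamma$ is determined by $\dd\eta=-2\ii\omega\,\mu\wedge\bar\mu$, that is $\dd\gamma=2\omega A\,\dd x\wedge\dd y$, evaluated at any fixed $w$.

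For $K\le0$ I take the trivialization of Lemma~\ref{lem:canonical-nonpositive}, in which $w$ is already the flow coordinate. Formula \eqref{eq:rho K negative} (and, for $K=0$, $\rho=(w-\ii U)^{-1}$) gives $\omega$ explicitly in terms of $U$ and $w$. The $\eta$-Einstein condition $\partial\rho=0$ combined with $\partial=\frac{1}{\sqrt{2A}}(\partial_{\bar z}\text{-type}-(\gamma\text{-component})\partial_w)$ becomes the statement that $\gamma$ is determined by $U$. Because $\partial_w(w\pm\ii U)=1$ and $\rho$ is a function of $w\pm\ii U$, the condition reads $\bar\partial_z$-derivative of $U$ $=\pm\ii\,(\gamma)^{(0,1)}$. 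Hence $\gamma=\pm *_0\dd U$ up to an exact form, and I remove the exact form by shifting the section $w\mapsto w+f$. Care is needed here, since the shift changes $U$'s role; I will check that the normalization $w-\ii U=\rho^{-1}$ already fixes $\gamma=\pm*_0\dd U$ exactly. Substituting into $\dd\gamma=2\omega A\,\dd x\wedge\dd y$ gives $\mp\Delta_0U=2\omega A$. Solving for $A$ with the explicit $\omega$ yields exactly $A_{0,U}=-\frac{\Delta_0U}{2U}(w^2+U^2)$ for $K=0$, and $A_{K,U}=H_U\{\cosh+\cos\}$ for $K<0$. Positivity of $A$ forces $\Delta_0U<0$, respectively the sign condition encoded by the smooth positive $H_U$ defined as the quotient. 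Smoothness of $H_U$ at zeros of $U$ follows because $A$ is smooth and positive.

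For $K>0$ the function $q$ of Lemma~\ref{prop:canonical-return-isometry} satisfies $q\circ\Phi_t=e^{-\ii\sqrt{2K}t}q$ and $|q|<1$ (since $\omega>0$). Hence $q=e^{-\ii\sqrt{2K}w}q_0(z)$ with $q_0=q|_{w=0}$, and $\partial\rho=0$ translates into a $\bar\partial$-equation for $q_0$ with coefficient given by $\gamma$. As in Lemma~\ref{lem:transverse-zero}, I solve this to write $q_0=e^{v}\bar s$ with $s$ holomorphic, where $v$ is real after a gauge choice. The gauge choice is made by using the freedom to shift the section $w\mapsto w+f(z)$ to absorb the imaginary part of the $\bar\partial$-primitive. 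Matching then forces $\gamma=-(2K)^{-1/2}*_0\dd v$. Now $\omega=\sqrt{K/2}\,(1-|q|^2)/|1-q|^2$, and the structure equation gives $A=\Delta_0v/(2K)\cdot|1-q|^2/(1-|q|^2)$. Positivity forces $\Delta_0v>0$. The conditions $s\not\equiv0$ and $e^v|s|<1$ come from properness and $|q|<1$, respectively: if $s\equiv0$, then $\rho$ is constant imaginary, so $\Theta\equiv0$.

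The main obstacle is passing from $B\cong\C$ to $\D$, i.e.\ excluding parabolic type, together with globalizing the $\bar\partial$-solutions on $B$ (on $\C$ or $\D$ these exist globally, so only the conformal type is an issue). To exclude $\C$, I would use $\Delta_0v>0$ with $e^v|s|<1$: the function $\mathfrak u=v+\log|s|$ is subharmonic, bounded above by $0$, and nonconstant away from zeros of $s$, which is impossible on $\C$ by Liouville for subharmonic functions. For $K=0$, $U>0$ is superharmonic, which also contradicts parabolicity unless $U$ is constant, and $U$ constant gives $A\le 0$. For $K<0$ I expect a similar maximum-principle argument on $\sin(\sqrt{2|K|}U)$ with $|U|$ bounded. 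The converse direction is a direct computation: in the models, $\kappa=\sigma=0$, $\rho$ equals the stated function, and $\partial\rho=0$, so Lemma~\ref{lem:eta-einstein-rho} gives $\eta$-Einstein with $\Ric(\xi,\xi)=K$ via the Riccati equation.
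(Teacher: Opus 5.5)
Your outline follows the paper's proof closely. It uses the same trivializations, including the canonical one of Lemma~\ref{lem:canonical-nonpositive} for $K\le0$. It uses $\partial\rho=0$ to fix $\eta$, and for $K>0$ it uses the variable $q$, a global $\bar\partial$-problem and the fiber shift $w=t+\vartheta/\sqrt{2K}$. It then excludes $B\cong\C$ by Liouville arguments. Using $\mathfrak u=v+\log|s|$ instead of the paper's smooth $b=e^{2\mathfrak u}$ for $K>0$ is a harmless variant.

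\textbf{Gap: excluding $B\cong\C$ for $K<0$.} You only anticipate this step (``a similar maximum-principle argument on $\sin(\sqrt{2|K|}U)$''), and that function does not work. With $c=\sqrt{2|K|}$,
\[
\Delta_0\sin(cU)=c^2\sin(cU)\bigl(H_U\cos(cU)-|\nabla U|^2\bigr),
\]
which has no sign. The missing observation is that $|cU|<\pi$ gives $U\sin(cU)\ge0$, hence
\[
\Delta_0(U^2)=2|\nabla U|^2+2c\,H_U\,U\sin(cU)\ge0 .
\]
So $U^2$ is a bounded subharmonic function on $\C$, hence constant, and then $U$ is constant. Now $0=\Delta_0U=cH_U\sin(cU)$ forces $U\equiv0$, contradicting $\omega\not\equiv0$. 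The contradiction must come from properness, not from positivity of $A$ as in your $K=0$ argument: $U\equiv0$ is compatible with every $H_U>0$, since it just gives a $\beta$-Kenmotsu structure with $A>0$.

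\textbf{Smaller points.}
\begin{enumerate}
\item \emph{Sign of the structure equation.} With the paper's conventions $\mu\wedge\bar\mu=-\ii A\,\dd x\wedge\dd y$, so $\dd\gamma=-2\omega A\,\dd x\wedge\dd y$. Your structure equation has the wrong sign, and the substitution step has a compensating slip. The final relation $\mp\Delta_0U=2\omega A$ is correct, but its derivation is not.
\item \emph{Zeros of $U$ for $K<0$.} Both $A(z,0)\,\omega(z,0)/\omega(z,w)$ and $\Delta_0U/(2\omega)$ are $0/0$ on $U^{-1}(0)\times\R$. Either integrate $\partial_w\log A=2\Theta$ with the explicit smooth $\Theta$, as the paper does, which gives $A=A(z,0)\,D(w,U)/D(0,U)$ with $D=\cosh(cw)+\cos(cU)$ everywhere. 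Or use the density of $\{\omega\ne0\}$.
\item \emph{No exact-form ambiguity in $\gamma$.} The condition $\partial(w-\ii U)=0$ (resp.\ $\partial(w+\ii U)=0$) is a single complex equation, and its real and imaginary parts give both components of $\gamma$ exactly. So no section shift is needed, and none is permitted, since $w$ is fixed by $\rho$.
\item \emph{Nondegeneracy for $K<0$.} Passing from $\partial\rho=0$ to $\partial(w+\ii U)=0$ uses $|K|/2-\rho^2\ne0$, which comes from Proposition~\ref{prop:complete-riccati-branches}.
\end{enumerate}
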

\end{screen}

\begin{proof}
Proposition~\ref{prop:orbit-surface} gives a trivial principal $\R$-bundle over the simply connected oriented surface $B:=M/\R\simeq\R^2$, with its induced conformal structure $\D$ or $\C$.
Since $\Lie_\xi\eta=0$, in a principal-bundle trivialization we can write
\begin{equation}
 g=h_t+(\dd t+\alpha)^2,\qquad \xi=\partial_t,
 \qquad \alpha\in\Omega^1(B),
 \label{eq:pre-normal-form}
\end{equation}
where $h_t$ is a family of Riemannian metrics on $B$.
By Proposition~\ref{prop:normal perp metric lie derivative}, all $h_t$ have the same conformal structure.
Thus, in a global isothermal coordinate $z=x+\ii y$ on $B$,
$h_t=A(z,t)(\dd x^2+\dd y^2)$.
The one-form $\alpha$ and, for $K\leq0$, the canonical potential $U$ are globally defined on $B$.
For $K\leq0$, choose the canonical product of
Lemma~\ref{lem:canonical-nonpositive}.

\medskip
\noindent\textbf{(i) $K=0$ case.}\\
By Lemma \ref{lem:canonical-nonpositive}, $\rho=(w-\ii U)^{-1}$ with $U>0$.
The equation $\partial\rho=0$ is equivalent to $\partial(w-\ii U)=0$.
Let $X=\partial_x-\alpha_x\partial_w$, $Y=\partial_y-\alpha_y\partial_w$. Then we obtain $0=(X-\ii Y)(w-\ii U)=(\partial_x-\alpha_x\partial_w)(w-\ii U)-\ii(\partial_y-\alpha_y\partial_w)(w-\ii U)=-(\alpha_x+U_y)+\ii(\alpha_y-U_x)$.
Thus we have $\alpha=-U_y\dd x+U_x\dd y=*_0(U_x\dd x+U_y\dd y)=*_0\dd U$.
Moreover, $\Theta=w/(w^2+U^2)$, so integrating the horizontal metric equation
$\partial_wh_w=2\Theta h_w$ gives
\[
 g=(w^2+U^2)h'+(\dd w+*_0\dd U)^2.
\]
Writing $h'=H(\dd x^2+\dd y^2)$ with $H>0$, we have $\mu\wedge\bar\mu=-\ii(w^2+U^2)H\dd x\wedge\dd y$ and $\dd\eta=\dd(-U_y\dd x+U_x\dd y)=(U_{xx}+U_{yy})\dd x\wedge\dd y$.
The identities $d\eta=-2\ii\omega\mu\wedge\bar\mu$ and $\omega=U/(w^2+U^2)$ give $(U_{xx}+U_{yy})\dd x\wedge\dd y=-2\ii\omega\,\mu\wedge\bar\mu=-2UH\dd x\wedge\dd y$. It follows that $\Delta_0U=-2HU<0$.
If $B=\C$, $-U$ is a bounded-above subharmonic function, and by the planar Liouville theorem \cite[Corollary~3.5.4]{ArmitageGardiner}, $U$ must be constant.
This gives the contradiction $0=\Delta_0U=-2HU<0$.
Thus $B=\D$.
Solving for $H$ gives \eqref{eq:model-zero}.

\medskip
\noindent\textbf{(ii) $K<0$ case.}\\
By Lemma \ref{lem:canonical-nonpositive}, we have
\begin{align*}
\rho
=\sqrt{\frac{|K|}{2}}\,
\frac{e^{\sqrt{2|K|}(w+iU)}-1}
     {e^{\sqrt{2|K|}(w+iU)}+1}
=\sqrt{\frac{|K|}{2}}\,
\tanh\!\left(\sqrt{\frac{|K|}{2}}(w+iU)\right),
 \qquad |U|<\frac{\pi}{\sqrt{2|K|}}.
 \label{eq:standard-rho-negative}
\end{align*}
The derivative of the right-hand side with respect to $w+\ii U$ is $|K|/2-\rho^2$, which is nonzero by Proposition~\ref{prop:complete-riccati-branches}, and thus $0=\partial\rho=(|K|/2-\rho^2)\partial(w+\ii U)$ implies $\partial(w+\ii U)=0$.
As in the $K=0$ case, we have $\alpha=U_y\dd x-U_x\dd y=-*_0\dd U$.
By \eqref{eq:rho K negative}, we have
\begin{equation}
 \Theta=\sqrt{\frac{|K|}{2}}\frac{\sinh(\sqrt{2|K|}w)}{D(w,U)},\qquad
 \omega=\sqrt{\frac{|K|}{2}}\frac{\sin(\sqrt{2|K|}U)}{D(w,U)},
 \label{eq:standard-expansion-negative}
\end{equation}
where $D(w,U)=\cosh(\sqrt{2|K|}w)+\cos(\sqrt{2|K|}U)$.
Integrating the horizontal metric equation gives
\begin{equation}
 g=D(w,U)h'+(\dd w-*_0\dd U)^2,\qquad
 h'=H(\dd x^2+\dd y^2),
\end{equation}
where $h'$ is a globally defined Riemannian metric on $B$.
$\dd\eta=-\Delta_0U \dd x\wedge\dd y$ is also given by
\begin{align*}
\mathrm d\eta
&=-2i\omega\,\mu\wedge\bar\mu
=-2i\sqrt{\frac{|K|}{2}}\frac{\sin(\sqrt{2|K|}U)}{D(w,U)}
  \bigl(-iD(w,U)H\,\mathrm dx\wedge\mathrm dy\bigr)\\
&=-\sqrt{2|K|}\,H\sin(\sqrt{2|K|}U)\,\mathrm dx\wedge\mathrm dy,
\end{align*}
which implies $\Delta_0 U=\sqrt{2|K|}\sin(\sqrt{2|K|}U)H$.
The potential is not identically zero because $\omega\not\equiv0$.

If $B=\C$, we have $\Delta_0(U^2)=2(U_x^2+U_y^2) +2\sqrt{2|K|}\,UH\sin(\sqrt{2|K|}U)\geq0$.
The bound on $U$ makes $U^2$ a bounded-above subharmonic function on $\C$.
The planar Liouville theorem used in \textup{(i)} again forces $U\equiv0$, a contradiction.
Therefore $B=\D$, and $H=H_U$ gives \eqref{eq:model-negative}.

\medskip
\noindent\textbf{(iii) $K>0$.}\\
Define a new variable by
\begin{align*}
q:=\frac{\rho-\ii \sqrt{K/2}}{\rho+\ii \sqrt{K/2}}.
\end{align*}
The Riccati equation $\xi\rho=-(\rho-\ii \sqrt{K/2})(\rho+\ii \sqrt{K/2})$ and \(\partial\rho=0\) give
\[
  \partial q=0,\qquad
  \xi q
  =\frac{\ii\sqrt{2K}}{(\rho+\ii \sqrt{K/2})^2}\xi\rho
  =-\ii \sqrt{2K}\frac{\rho-\ii \sqrt{K/2}}{\rho+\ii \sqrt{K/2}}
  =-\ii\sqrt{2K} q.
\]
Thus, for some \(F\in C^\infty(B,\C)\), we can write $q(z,t)=\e^{-\ii\sqrt{2K} t}F(z)$.
Since the horizontal basis can be written as $ X:=\partial_x-\alpha_x\partial_t,\  Y:=\partial_y-\alpha_y\partial_t$, \(\partial q=0\) gives
\[
  0=(X-\ii Y)q
  =\e^{-\ii\sqrt{2K} t}
   \left\{(\partial_x-\ii\partial_y)F
   +\ii\sqrt{2K}(\alpha_x-\ii\alpha_y)F\right\}.
\]
Here, the inhomogeneous Cauchy--Riemann equation for $\mathcal H\in C^\infty(B,\C)$
\[
  (\partial_x-\ii\partial_y)\mathcal H
  =\ii\sqrt{2K}(\alpha_x-\ii\alpha_y),\qquad
  \mathcal H=u+\ii\vartheta,
\]
can be solved globally on \(B=\D\) or $\C$
\cite{Forster}.
Then $(\partial_x-\ii\partial_y)(\e^{\mathcal H}F)=0$, so for some \(s\in\mathcal O(B)\), we can write
\[
  \e^{\mathcal H}F=\overline{s}.
\]
Setting $v:=-u,\ w:=t+\frac{\vartheta}{\sqrt{2K}}$, we can write
\[
  q=\e^{v-\ii\sqrt{2K} w}\overline{s}.
\]
$w$ is a new global fiber coordinate.

Taking the real and imaginary parts of the equation for \(\mathcal H\) gives $u_x+\vartheta_y=\sqrt{2K}\alpha_y,\  \vartheta_x-u_y=\sqrt{2K}\alpha_x$, so $\eta$ is
\[
\begin{aligned}
  \eta
  =\dd t+\alpha
  =\dd w +\alpha-\frac{1}{\sqrt{2K}}\dd\vartheta
  =\dd w +\frac{1}{\sqrt{2K}} *_0\dd u
  =\dd w-\frac{1}{\sqrt{2K}} *_0\dd v.
\end{aligned}
\]
At this point, $\dd\eta=-\frac{\Delta_0v}{\sqrt{2K}}\dd x\wedge\dd y$.
Also, since \(g=A|\dd z|^2+\eta^2\), we have $\mu\wedge\bar\mu=-\ii A\,\dd x\wedge\dd y$ and $\dd\eta=-2\ii\omega\,\mu\wedge\bar\mu=-2\omega A\,\dd x\wedge\dd y$.
Therefore, $\Delta_0v=2\sqrt{2K}\omega A>0$.
From the inverse transformation of \(q=(\rho-\ii \sqrt{K/2})/(\rho+\ii \sqrt{K/2})\), we have
\begin{align}
\rho&=\ii\sqrt{\frac{K}{2}}\frac{1+q}{1-q},\label{eq:K positive rho q}\\
\omega&=\sqrt{\frac{K}{2}}\frac{1-|q|^2}{|1-q|^2},\nonumber
\end{align}
and hence
\[
  A
  =\frac{\Delta_0v}{2\sqrt{2K}\omega}
  =\frac{\Delta_0v}{2K}
    \frac{|1-q|^2}{1-|q|^2}.
\]
Also, \(q=(\rho-\ii \sqrt{K/2})/(\rho+\ii \sqrt{K/2})\) gives
\[
  |q|^2
  =\frac{\Theta^2+(\omega-\sqrt{K/2})^2}{\Theta^2+(\omega+\sqrt{K/2})^2}<1.
\]
Therefore, $\e^v|s|=|q|<1$.
Thus, $\Delta_0v>0$ and $\e^v|s|<1$ hold on $B$.

Finally, we exclude \(B=\C\).
At a point where $s=0$, we have $q=0$, so $\rho=i\sqrt{K/2}$.
Since $s\in\mathcal{O}(B)$, either the zeros of $s$ are isolated or $s\equiv0$.
If $s\equiv0$, then $\rho\equiv i\sqrt{K/2}$, so the structure is $\alpha$-Sasakian, contradicting the assumption. Therefore, $s\not\equiv0$, and $\{s\ne0\}$ is dense in $B$.
Setting $b:=\e^{2v}|s|^2=|q|^2$, we have \(0\leq b<1\) and \(b\not\equiv0\).
Writing \(v_z=(v_x-\ii v_y)/2\), we obtain
\begin{align*}
\partial_zb
&=\partial_z\!\left(\e^{2v}s\overline s\right)
=2v_z\e^{2v}s\overline s+\e^{2v}s'\overline s
=\e^{2v}\overline s\left(s'+2v_zs\right)\\
\partial_{\bar z}\partial_zb
&=\e^{2v}
 \left(2v_{\bar z}\overline s+\overline{s'}\right)
 \left(s'+2v_zs\right)
 +2\e^{2v}|s|^2v_{z\bar z}
=\e^{2v}|s'+2v_zs|^2
 +2\e^{2v}|s|^2v_{z\bar z}\\
\Delta_0b
&=4\e^{2v}|s'+2v_zs|^2
 +8\e^{2v}|s|^2v_{z\bar z}
=4\e^{2v}|s'+2v_zs|^2
 +2\e^{2v}|s|^2\Delta_0v\\
&=2b\,\Delta_0v
    +4\e^{2v}|s'+2v_zs|^2\geq0.
\end{align*}
If \(B=\C\), then \(b\) is a bounded-above subharmonic function, so it is constant by the Liouville theorem.
However, this contradicts the fact that \(\{s\neq0\}\neq\varnothing\) and \(\Delta_0b>0\) there.
Thus, \(B=\D\).
We have already shown that $s\not\equiv0$, so $(v,s)\in\mathcal U_K^{\mathrm{pair}}$,
and \eqref{eq:model-positive} follows.

\medskip
\noindent\textbf{Conversely.}
For any admissible data, the coefficient $A$ in each model is smooth and strictly positive.
$\xi=\partial_w$ is unit and satisfies $\iota_\xi\dd\eta=0$, hence $\xi$ is geodesic.
Since $g-\eta^2=A|\dd z|^2$, $\xi$ is shear-free, so the structures are trans-Sasakian.
By following the above calculations in reverse, we also see that it is \(\eta\)-Einstein and $\omega\Theta\not\equiv0$.
\end{proof}

Theorem~\ref{prop:Standard eta-Einstein metrics} assumes only completeness of $\xi$, not geodesic completeness of $g$.
Geodesic completeness of the model metrics is an additional condition.
The criteria and complete examples are given in Appendix~\ref{app:Geodesically complete model spaces}.

\begin{corollary}
\label{cor:properties of K positive model space}
Let $\mathcal M_K(v,s)$ be a model space for $K>0$, and let $p\in\D$ be a zero of $s$.
The characteristic flow line $\ell=\{p\}\times\R$ satisfies
$\Theta|_\ell\equiv0$ and $\omega|_\ell\equiv\sqrt{K/2}$.
Moreover, this structure is $\alpha$-Sasakian if and only if $s\equiv0$.
\end{corollary}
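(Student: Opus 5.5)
The plan is to read everything off the explicit formula \eqref{eq:K positive rho q}, which expresses $\rho$ in terms of $q$:
\[
\rho=\ii\sqrt{K/2}\,\frac{1+q}{1-q},\qquad q=\e^{v-\ii\sqrt{2K}w}\,\overline{s}.
\]
To connect this with the model, note that the converse direction of Theorem~\ref{prop:Standard eta-Einstein metrics} identifies $q$, computed on $\mathcal M_K(v,s)$, with the M\"obius image $(\rho-\ii\sqrt{K/2})/(\rho+\ii\sqrt{K/2})$ of the spin coefficient $\rho$. This is obtained by running the computation in part (iii) backwards. So on the model $\rho$ is given exactly by the displayed formula.

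Now fix a zero $p$ of $s$. Along $\ell=\{p\}\times\R$ we have $\overline{s(p)}=0$, so $q\equiv0$ on $\ell$ for every $w$, independently of the factor $\e^{v-\ii\sqrt{2K}w}$. Substituting $q=0$ gives $\rho|_\ell=\ii\sqrt{K/2}$. Taking real and imaginary parts yields $\Theta|_\ell\equiv0$ and $\omega|_\ell\equiv\sqrt{K/2}$. As a consistency check, this is a fixed point of the Riccati equation \eqref{eq:riccati}, since $-(\ii\sqrt{K/2})^2-K/2=0$, so it persists along the flow line.

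For the equivalence, first suppose $s\equiv0$. Then $q\equiv0$ on all of $\D\times\R$, so $\rho\equiv\ii\sqrt{K/2}$ and $\Theta\equiv0$, and the structure is $\alpha$-Sasakian. Conversely, suppose the structure is $\alpha$-Sasakian, i.e.\ $\Theta\equiv0$. Then the imaginary part of the Riccati equation gives $\xi\omega=-2\Theta\omega=0$, and the real part gives $0=\xi\Theta=\omega^2-K/2$. Hence $\omega\equiv\sqrt{K/2}$ (recall $\omega>0$), so $\rho\equiv\ii\sqrt{K/2}$ and $q\equiv0$. Since $\e^{v}\neq0$, this forces $s\equiv0$.

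There is one definitional point to address. The admissible-pair definition \eqref{eq:admissible-positive} requires $s\not\equiv0$, so the case $s\equiv0$ is formally excluded from $\mathcal M_K(v,s)$. I would state the equivalence for the formula \eqref{eq:model-positive} with $s$ allowed to vanish identically: the metric is still defined, since then $q=0$ and $A=\Delta_0v/(2K)>0$. With this reading, the corollary says that the admissibility condition $s\not\equiv0$ is precisely what excludes the $\alpha$-Sasakian case.

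The main subtlety is this bookkeeping, together with justifying that $q$ on the model really equals the M\"obius image of the geometric $\rho$, which rests on reversing the computation in part (iii). Everything else is direct substitution.
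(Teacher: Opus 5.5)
Your proof is correct. The first part (along a zero of $s$) and the implication $s\equiv0\Rightarrow$ $\alpha$-Sasakian are exactly the paper's argument: substitute $q=0$ into \eqref{eq:K positive rho q}.

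For the converse you take a different route. The paper writes out $\Theta=-\sqrt{2K}\,\Im q/|1-q|^2$. It then observes that for fixed $z$ with $s(z)\neq0$, the map $w\mapsto q(z,w)=\e^{v(z)}\overline{s(z)}\e^{-\ii\sqrt{2K}w}$ runs over a circle of positive radius about the origin. Hence $\Im q$, and so $\Theta$, cannot vanish identically on that fiber. This is a bare computation from the explicit formula and uses no geometric identity.

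You instead use the real part of the Riccati equation, $\xi\Theta=\omega^2-\Theta^2-K/2$. From it, $\Theta\equiv0$ forces $\omega^2=K/2$, so $\rho$ sits at the fixed point $\ii\sqrt{K/2}$ and $q\equiv0$. This is more structural: any expansion-free structure obeying \eqref{eq:riccati} with $K>0$ and $\omega>0$ must have $\rho\equiv\ii\sqrt{K/2}$. Like the paper's argument, yours also works fiber by fiber, so both show that the expansion-free fibers are exactly the lines over zeros of $s$.

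The one extra input you need is that \eqref{eq:riccati} holds on the model. You can get this from the converse part of Theorem~\ref{prop:Standard eta-Einstein metrics}. More self-containedly, differentiate $\rho=\ii\sqrt{K/2}\,(1+q)/(1-q)$ using $\partial_wq=-\ii\sqrt{2K}\,q$; this gives $\partial_w\rho=2Kq/(1-q)^2=-\rho^2-K/2$ directly.

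Your remark about admissibility is a fair point that the paper leaves implicit. The condition $s\not\equiv0$ formally excludes the $\alpha$-Sasakian case, so the stated equivalence must be read with $s\equiv0$ allowed in \eqref{eq:model-positive}.
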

\begin{proof}
Equation~\eqref{eq:K positive rho q} gives
\begin{align*}
\rho(z,w)&=\ii\sqrt{\frac K2}\frac{1+q(z,w)}{1-q(z,w)}\\
q(z,w)&=\e^{v(z)-\ii\sqrt{2K}w}\overline{s(z)}.
\end{align*}
If \(s(p)=0\), then \(q(p,w)=0\), so $\rho(p,w)=\ii\sqrt{K/2}$.

If \(s\equiv0\), then \(q\equiv0\), so \(\Theta\equiv0\), and \(\mathcal M_K(v,s)\) is \(\alpha\)-Sasakian.
Also, since
\[
\rho
=\frac{-\sqrt{2K}\Im q+\ii\sqrt{K/2}(1-|q|^2)}{|1-q|^2}
\]
we have
\[
\Theta=-\sqrt{2K}\frac{\Im q}{|1-q|^2}.
\]
If \(\mathcal M_K(v,s)\) is \(\alpha\)-Sasakian, then \(\Theta\equiv0\), so \(\Im q(z,w)\equiv0\).
For each fixed \(z\in\D\), if \(\e^{v(z)}\overline{s(z)}\neq0\), then \(q(z,w)=\e^{v(z)}\overline{s(z)}\e^{-\ii\sqrt{2K}w}\) traces a circle centered at the origin as \(w\) varies, so it cannot always take real values.
Therefore, \(\e^{v(z)}\overline{s(z)}=0\), and \(s(z)=0\). This holds for every \(z\in\D\), so \(s\equiv0\).
\end{proof}

\begin{corollary}
\label{cor:mixed rank-locus}
Let $\mathcal M_K(U)$ be a model space for $K<0$.
Then this structure has rank one on $U^{-1}(0)\times\R$ and rank three on its complement.
The interior of the zero set $\mathrm{Int}(U^{-1}(0))$ is empty.
\end{corollary}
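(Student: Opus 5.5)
The plan is to read off $\omega$ explicitly on the model and then apply the unique-continuation remark. For $\mathcal M_K(U)$ with $K<0$, the proof of Theorem~\ref{prop:Standard eta-Einstein metrics}, case (ii), run in reverse, gives \eqref{eq:standard-expansion-negative}:
\[
\omega(z,w)=\sqrt{\frac{|K|}{2}}\,\frac{\sin(\sqrt{2|K|}\,U(z))}{D(w,U(z))},\qquad D=\cosh(\sqrt{2|K|}w)+\cos(\sqrt{2|K|}U).
\]
For the model itself I would check this directly. Take the frame $\partial=(2A_{K,U})^{-1/2}\{\partial_x-\ii\partial_y-(\alpha_x-\ii\alpha_y)\partial_w\}$ with $\alpha=-*_0\dd U$. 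Then $\dd\eta_K=-\Delta_0U\,\dd x\wedge\dd y$ and $\mu\wedge\bar\mu=-\ii A_{K,U}\,\dd x\wedge\dd y$. The identity $\dd\eta=-2\ii\omega\,\mu\wedge\bar\mu$ then gives
\[
\omega=\frac{\Delta_0U}{2A_{K,U}}=\frac{\sqrt{2|K|}\,H_U\sin(\sqrt{2|K|}U)}{2H_U D}.
\]
This uses \eqref{eq:admissible-negative-variable-rank} and agrees with the formula above.

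Next I would show that $D>0$ everywhere. Since $|\sqrt{2|K|}U|<\pi$, we have $\cos(\sqrt{2|K|}U)>-1$, while $\cosh\ge1$. So $\omega$ is smooth and finite. Moreover, $\omega(z,w)=0$ exactly when $\sin(\sqrt{2|K|}U(z))=0$. Because $\sqrt{2|K|}U(z)\in(-\pi,\pi)$, this happens exactly when $U(z)=0$. The zero set of $\omega$ is therefore $U^{-1}(0)\times\R$. By the pointwise definition of rank in Section~\ref{sec:some properties of 3D trans-Sasakian}, the rank is one there and three on the complement.

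For the interior statement I would use the remark following the definition of admissible potentials. There $U$ solves the linear elliptic equation $\Delta_0u-Vu=0$ with smooth $V$, so Aronszajn's theorem shows that $U$ cannot vanish on a nonempty open set unless $U\equiv0$. Admissibility excludes $U\equiv0$, so $\mathrm{Int}(U^{-1}(0))=\varnothing$. Alternatively, one could apply the unique continuation for $\omega$ in Lemma~\ref{lem:eta-einstein-rho} to the open set $\mathrm{Int}(U^{-1}(0))\times\R$, using that the model is $\eta$-Einstein with $\omega\not\equiv0$.

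The only delicate point is justifying the formula for $\omega$ on the abstract model rather than on a manifold already known to be trans-Sasakian. The direct $\dd\eta$ computation settles this, so I expect no real obstacle.
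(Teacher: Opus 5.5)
Your proposal is correct and matches the paper's argument for the rank statement: you read $\omega$ off \eqref{eq:standard-expansion-negative}, note that $D>0$, and observe that $\sin(\sqrt{2|K|}U)$ vanishes exactly on $U^{-1}(0)$ because $|\sqrt{2|K|}U|<\pi$; your direct $\dd\eta$ check of the formula on the model is a welcome extra. For the interior statement the paper uses the density of $\{\omega\neq0\}$ from Lemma~\ref{lem:eta-einstein-rho}, which is your stated alternative, whereas your main route through the Aronszajn remark for $U$ on $\D$ is equally valid and slightly more self-contained, since it needs only admissibility of $U$ and not the converse direction of Theorem~\ref{prop:Standard eta-Einstein metrics}.
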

\begin{proof}
The formula for $\omega$ in \eqref{eq:standard-expansion-negative} and
$|\sqrt{2|K|}U|<\pi$ give $\omega=0$ exactly when $U=0$.
Since $\omega\not\equiv0$, Lemma~\ref{lem:eta-einstein-rho} shows that $\{\omega\ne0\}$ is dense. Hence $\textrm{Int}(U^{-1}(0))=\emptyset$.
\end{proof}

Flows of vector fields are broadly classified as Palais-regular or Palais-nonregular.
The definition is as follows.

\begin{definition}[Palais regularity]
\label{def:palais-regularity}
The vector field $\xi$ is called \textcolor{blue}{\textbf{regular in the sense of Palais}} if, for every point $p\in M$, there exists a flow box $U$ containing $p$ such that each orbit of $\xi$ meets $U$ in at most one plaque \cite{Palais} \cite[Definition~2, p.~93]{Morimoto1964}.
A vector field that is not Palais-regular is called Palais-nonregular.
\end{definition}

\begin{corollary}
Let $(M^3,\varphi,\xi,\eta,g)$ be a connected noncompact three-dimensional proper trans-Sasakian $\eta$-Einstein manifold with complete $\xi$.
On a universal cover of $M$, the lifted characteristic vector field $\tilde{\xi}$ is regular.
\end{corollary}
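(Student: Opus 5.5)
The idea is to pass to the universal cover $\widetilde M$ with the lifted structure $(\tilde\varphi,\tilde\xi,\tilde\eta,\tilde g)$. We then use the fact, proved above, that the lifted characteristic flow is a free and proper $\R$-action whose orbit space is a surface. Palais regularity should follow from the local triviality of the resulting principal bundle.

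First I check that the hypotheses transfer to the cover. Since $\widetilde M\to M$ is a local isometry intertwining the structures, $\widetilde M$ is a connected, simply connected three-dimensional trans-Sasakian $\eta$-Einstein manifold. The lifted field $\tilde\xi$ is complete, because flow lines lift uniquely: the lift of $\Phi_t(p)$ starting at $\tilde p$ is defined for all $t$. The manifold $\widetilde M$ is noncompact, since a compact cover would force $M$ to be compact. Finally $\tilde\omega$ and $\tilde\Theta$ are the pullbacks of $\omega$ and $\Theta$, so $\tilde\omega\tilde\Theta\not\equiv0$ and the lifted structure is proper. (If one also wants the rank-one or $\alpha$-Sasakian cases, they follow from Proposition~\ref{prop:eta-einstein simple case} and Lemma~\ref{lem:canonical-nonpositive}/Proposition~\ref{prop:orbit-surface} with $K=0$-type arguments, but the statement concerns the proper case.)

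By Proposition~\ref{prop:orbit-surface}, the flow of $\tilde\xi$ is then a free and proper $\R$-action, and $\widetilde M\to B=\widetilde M/\R$ is a trivial principal $\R$-bundle, so $\widetilde M\cong B\times\R$ with $\tilde\xi=\partial_t$. Given $p\in\widetilde M$, I take a coordinate disk $V\subset B$ around the image of $p$ and set $U=V\times(-1,1)$. This is a flow box, and its plaques are $\{x\}\times(-1,1)$. The orbit through any point of $B\times\R$ is $\{x\}\times\R$, so it meets $U$ in at most the single plaque $\{x\}\times(-1,1)$. That is exactly Palais regularity in the sense of Definition~\ref{def:palais-regularity}.

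The main obstacle is only bookkeeping: confirming that all the hypotheses of Proposition~\ref{prop:orbit-surface} hold on the cover, in particular completeness of $\tilde\xi$ and noncompactness of $\widetilde M$. No new analysis is needed, since properness was already established in Lemma~\ref{lem:canonical-nonpositive} for $K\le0$ and in Lemma~\ref{lem:simply-connected-open-killing-proper} via the averaged metric for $K>0$.
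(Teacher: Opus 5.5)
Your argument is correct and is essentially the paper's. The paper cites the model-space classification (Theorem~\ref{prop:Standard eta-Einstein metrics}), which itself rests on Proposition~\ref{prop:orbit-surface}, to get a free proper $\R$-action on the cover, and then reads off flow boxes meeting each orbit in a single plaque from the principal-bundle trivialization, exactly as you do. Your explicit check that completeness, noncompactness and properness lift to $\widetilde M$ is a useful addition that the paper leaves implicit.

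The only slip is in your parenthetical aside, which lies outside the statement. In the $\alpha$-Sasakian case $K=2\omega^2>0$, so Lemma~\ref{lem:canonical-nonpositive} does not apply. One would instead use Lemma~\ref{lem:simply-connected-open-killing-proper} directly, since $\xi$ is then Killing.
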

\begin{proof}
The universal covering space of $M$ is isomorphic to one of the model spaces.
On model spaces, the characteristic flow action is free and proper.
A free proper $\R$-action defines a principal bundle, whose local trivializations give distinguished flow boxes.
Hence $\tilde{\xi}$ is Palais-regular.
\end{proof}

In Appendix \ref{app:Palais-nonregular quotient constructions}, we shall investigate the Palais-nonregular case.

\section{Structure-preserving isomorphism}
\label{sec:Structure-preserving isomorphism}

Let $(M_i,\varphi_i,\xi_i,\eta_i,g_i),\ (i=1,2)$ be ACM manifolds.
We call a diffeomorphism $f:M_1\to M_2$ which preserves their ACM structures a \textcolor{blue}{\textbf{structure-preserving isomorphism}}.
We call the group formed by structure-preserving isomorphisms from the model space $\mathcal M_K$ to itself the \textcolor{blue}{\textbf{structure-preserving automorphism group}} and denote it by \textcolor{blue}{$\StrAut(\mathcal M_K)$}.
The following theorem follows immediately from Theorem~\ref{prop:Standard eta-Einstein metrics}.

\begin{screen}
\begin{theorem}
\label{thm:global-quotient-model}
Let $(M^3,\varphi,\xi,\eta,g)$ be a connected noncompact proper trans-Sasakian $\eta$-Einstein manifold with complete $\xi$ and $K=\Ric(\xi,\xi)$.
If $K\geq0$, replace $(\xi,\eta)$ by $(-\xi,-\eta)$ if necessary so that $\omega>0$.
Then we have
\[
 M\cong
 \begin{cases}
 \mathcal M_K(v,s)/\Gamma,&K>0,\\
 \mathcal M_K(U)/\Gamma,&K\leq0,
 \end{cases}
\]
where $\Gamma$ is a discrete subgroup of the corresponding structure-preserving automorphism group acting freely and properly discontinuously.
\end{theorem}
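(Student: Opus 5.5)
The plan is to pass to the universal cover and invoke Theorem~\ref{prop:Standard eta-Einstein metrics}. Let $\pi\colon\widetilde M\to M$ be the universal covering and pull back the structure: $(\tilde\varphi,\tilde\xi,\tilde\eta,\tilde g)=\pi^*(\varphi,\xi,\eta,g)$.

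The lifted structure inherits every hypothesis we need.
\begin{itemize}
\item Being trans-Sasakian and $\eta$-Einstein are local conditions, so they hold on $\widetilde M$.
\item The spin coefficient $\rho$ lifts to $\rho\circ\pi$, so $\tilde\omega\tilde\Theta\not\equiv0$ and the lift is proper.
\item $\tilde\xi$ is complete, because integral curves of $\xi$ lift uniquely and exist for all time.
\item $K$ is unchanged. If $K\ge0$, the sign normalization $\omega>0$ made on $M$ lifts directly.
\end{itemize}
The one hypothesis that needs an argument is noncompactness of $\widetilde M$. If $\pi_1(M)$ is finite, $\widetilde M$ is a finite cover of the noncompact $M$, hence noncompact. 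For general $\pi_1(M)$ I would argue as follows. The proofs of Lemma~\ref{lem:canonical-nonpositive}, Lemma~\ref{prop:averaged-invariant-metric} and the rank-three part of Proposition~\ref{prop:orbit-surface} give a free action of $\R$ on $\widetilde M$ by the flow of $\tilde\xi$, and the corresponding function $w$ or $f$ is unbounded along orbits. This rules out compactness, since a compact $\widetilde M$ would force such a function to be bounded. More simply, a compact simply connected $3$-manifold would force $M$ to be a finite quotient, hence compact, a contradiction.

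Theorem~\ref{prop:Standard eta-Einstein metrics} then gives a structure-preserving isomorphism $\Psi\colon\widetilde M\to\mathcal M$, where $\mathcal M$ is $\mathcal M_K(v,s)$ or $\mathcal M_K(U)$ with admissible data. Let $\Gamma_0\subset\operatorname{Diff}(\widetilde M)$ be the deck group. Each deck transformation $\gamma$ satisfies $\pi\circ\gamma=\pi$, and the structure on $\widetilde M$ is pulled back along $\pi$. Hence $\gamma$ preserves $\tilde\varphi,\tilde\xi,\tilde\eta,\tilde g$, so $\gamma\in\StrAut(\widetilde M)$. Conjugating by $\Psi$, set $\Gamma:=\Psi\Gamma_0\Psi^{-1}\subset\StrAut(\mathcal M)$. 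Deck groups act freely and properly discontinuously, and these properties are preserved under conjugation by a diffeomorphism. Therefore $\mathcal M/\Gamma\cong\widetilde M/\Gamma_0\cong M$.

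The structures match because the ACM structure on $M$ is exactly the one descended from $\widetilde M$. Discreteness of $\Gamma$ in $\StrAut(\mathcal M)$ follows from proper discontinuity: $\StrAut(\mathcal M)$ is a closed subgroup of $\Isom(\mathcal M,g_K)$ with the compact-open topology, and a subgroup acting properly discontinuously is discrete there.

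I expect the only real obstacle to be the noncompactness of $\widetilde M$. The topological fallback I would use is this: any compact simply connected $3$-manifold is $S^3$ by Perelman, so $M$ would be a quotient of a compact space by a group acting properly discontinuously, which is necessarily finite. That makes $M$ compact and contradicts the hypothesis. The dynamical argument above avoids citing Perelman if one prefers.
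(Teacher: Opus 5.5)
Your proposal is correct and is essentially the paper's argument: the paper only remarks that the theorem follows immediately from Theorem~\ref{prop:Standard eta-Einstein metrics}, and the intended proof is exactly yours, namely apply that theorem to the universal cover and conjugate the deck group into the structure-preserving automorphism group. The step you single out as the main obstacle is in fact trivial, since the covering map is surjective and a compact $\widetilde M$ would make $M=\pi(\widetilde M)$ compact, so neither Perelman nor a dynamical argument is needed; note also that your dynamical alternative is circular for $K>0$, because there the free proper $\R$-action and the function $f$ come from Lemma~\ref{lem:simply-connected-open-killing-proper}, which itself assumes noncompactness (it uses $H^3_{\mathrm{dR}}(N)=0$).
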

\end{screen}

In the rest of this section, we determine the structure-preserving automorphisms of the model spaces $\mathcal{M}_K(v,s)$ and $\mathcal{M}_K(U)$.
We write \textcolor{blue}{$\Aut(\D)$} for the group of holomorphic automorphisms of $\D$, and $\mathcal{O}^*(\D)$ for the set of holomorphic functions with no zeros on $\D$.

\subsection{\texorpdfstring{$K>0$}{K > 0} case}\label{subsec:straut gp K positive case}
On the model space \(\mathcal M_K(v,s)\), we define the \textcolor{blue}{\textbf{singular potential}} $\mathfrak u_{v,s}:\D\times\R\to\R\cup\{-\infty\}$ by
\[
  \mathfrak u_{v,s}:=v+\log|s|.
\]

\begin{lemma}
\label{lem:singular-potential-gauge}
Let \(v,V\in C^\infty(\D,\R)\) and \(s,S\in\mathcal O(\D)\setminus\{0\}\).
If $v+\log|s|=V+\log|S|$, then \(s\) and \(S\) have the same zero set and the same order at each zero.
Therefore, \(h:=S/s\) extends to an element of \(\mathcal O^*(\D)\), and
\(V=v-\log|h|\) and \(S=hs\).
The converse also holds.
Thus, admissible pairs with the same singular potential are mapped to one another by this transformation.
\end{lemma}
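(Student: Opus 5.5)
The plan is to compare the two sides of $v+\log|s|=V+\log|S|$ near each point of $\D$. Since $v,V$ are smooth and real-valued, the identity says that $\log|s|-\log|S|=V-v$ on the common complement of the zero sets, with the right-hand side finite and smooth on all of $\D$.

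We first show that the zero sets coincide. Near a point $p$ where $s$ has a zero of order $m\ge0$ and $S$ has a zero of order $n\ge0$, write $s=(z-p)^m s_1$ and $S=(z-p)^n S_1$ with $s_1(p)S_1(p)\ne0$. Then
\[
\log|s|-\log|S|=(m-n)\log|z-p|+\log|s_1|-\log|S_1|,
\]
and the last two terms are bounded near $p$. If $m\ne n$, the left-hand side is unbounded as $z\to p$, which contradicts the boundedness of $V-v$ near $p$. Hence $m=n$ at every point. As a result, the holomorphic function $h:=S/s$, defined off the isolated zero set of $s$, has only removable singularities there, since $S_1/s_1$ is holomorphic and nonvanishing near $p$. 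So $h$ extends to an element of $\mathcal O^*(\D)$ with $S=hs$.

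Next, on $\{s\ne0\}$ we have $V=v+\log|s|-\log|S|=v-\log|h|$. Both sides are smooth on $\D$, because $\log|h|$ is smooth (indeed harmonic) when $h$ is zero-free, and $\{s\ne0\}$ is dense, so the identity holds on all of $\D$ by continuity.

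For the converse, given $h\in\mathcal O^*(\D)$, we set $V=v-\log|h|$ and $S=hs$. Then $V+\log|S|=v-\log|h|+\log|h|+\log|s|$ identically as functions into $\R\cup\{-\infty\}$. For the final remark we also check admissibility:
\begin{itemize}
\item $\Delta_0V=\Delta_0v>0$, because $\log|h|$ is harmonic;
\item $e^V|S|=e^v|s|<1$;
\item $S\not\equiv0$.
\end{itemize}
So $(V,S)$ is again an admissible pair.

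The only slightly delicate step is the order comparison, and the boundedness argument above handles it directly.
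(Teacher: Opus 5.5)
Your proposal is correct and follows the paper's proof: you factor $s=(z-p)^m s_1$, $S=(z-p)^n S_1$ locally and note that $(m-n)\log|z-p|$ cannot equal a function bounded near $p$ unless $m=n$. Allowing $m,n\ge 0$ also covers the paper's separate claim that the zero sets agree, and your admissibility check for the last sentence of the statement is one the paper only carries out later, in the holomorphic-gauge lemma.
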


\begin{proof}
Comparing both sides, we see that $s$ and $S$ have the same zero set.
If we write $s=(z-p)^d a,\  S=(z-p)^e b,\ a(p)b(p)\neq0$ near a common zero \(p\), then $(d-e)\log|z-p|=V-v+\log|b|-\log|a|$.
The right-hand side is smooth at \(z=p\), so \(d=e\).
Thus, \(S/s=b/a\) is holomorphic and nowhere zero even at the zeros, and
the original equality gives \(V=v-\log|h|\).
The converse is trivial.
\end{proof}

\begin{proposition}
\label{prop:generalized-rigidity}
A structure-preserving isomorphism \(F:\mathcal M_K(v,s)\to\mathcal M_K(V,S)\) is given by
\begin{align}
F_{\psi,L}(z,w):=\left(\psi(z),w+\frac{1}{\sqrt{2K}}\Im L(z)\right),\\
\psi\in\Aut(\D),\ 
\mathfrak u_{v,s}=\mathfrak u_{V,S}\circ\psi,\ 
e^L=\frac{s}{S\circ\psi}\in \mathcal O^*(\D).
\end{align}
\end{proposition}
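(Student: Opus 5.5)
We prove the two directions separately: first that every structure-preserving isomorphism has the form $F_{\psi,L}$, then that every such map is one.

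\textbf{Normal form of $F$.} Let $F:\mathcal M_K(v,s)\to\mathcal M_K(V,S)$ be a structure-preserving isomorphism. Since $F_*\xi=\xi$ with $\xi=\partial_w$ on both sides, $F$ commutes with the characteristic flows. Hence $F$ maps flow lines to flow lines and descends to a diffeomorphism $\psi$ of the orbit space $\D$. In coordinates,
\[
F(z,w)=\bigl(\psi(z),\,w+c(z)\bigr)\qquad\text{for some }c\in C^\infty(\D,\R).
\]
$F$ preserves $g$ and $\eta$, so it preserves the horizontal metric $g-\eta^2=A|\dd z|^2$. Therefore $\psi$ is conformal for $|\dd z|^2$. $F$ also intertwines $\varphi$, which is the horizontal lift of the standard complex structure. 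So $\psi$ is orientation preserving, and hence $\psi\in\Aut(\D)$.

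\textbf{Transport of $q$ and the singular potential.} The spin coefficient $\rho$ depends only on $(g,\xi)$, so $\rho_{V,S}\circ F=\rho_{v,s}$. Since $q=(\rho-\ii\sqrt{K/2})/(\rho+\ii\sqrt{K/2})$, also $q_{V,S}\circ F=q_{v,s}$. Written out, this says
\[
e^{V(\psi(z))-\ii\sqrt{2K}(w+c(z))}\,\overline{S(\psi(z))}=e^{v(z)-\ii\sqrt{2K}w}\,\overline{s(z)}.
\]
The $w$-dependence cancels. Taking absolute values and logarithms on the dense set $\{s\ne0\}$ gives $\mathfrak u_{v,s}=\mathfrak u_{V,S}\circ\psi$. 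This identity also holds at the zeros, with value $-\infty$ on both sides, because the zero sets of $s$ and $S\circ\psi$ agree: indeed $q_{v,s}=0$ exactly when $s=0$.

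Lemma~\ref{lem:singular-potential-gauge} applies to the pairs $(v,s)$ and $(V\circ\psi,S\circ\psi)$, noting $S\circ\psi\in\mathcal O(\D)$. It shows that $h:=s/(S\circ\psi)$ lies in $\mathcal O^*(\D)$ and that $V\circ\psi=v-\log|h|$. Since $\D$ is simply connected, we may write $h=e^L$ with $L\in\mathcal O(\D)$. Substituting back, the identity becomes
\[
e^{-\ii\sqrt{2K}c}=\overline{h}/|h|=e^{-\ii\Im L}.
\]
Hence $\sqrt{2K}\,c\equiv\Im L\pmod{2\pi}$. Because $c$ is continuous and $\D$ is connected, the integer multiple of $2\pi$ is constant. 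We absorb it into $L$ by adding $2\pi\ii k$, which does not change $e^L$. This yields $c=\Im L/\sqrt{2K}$ and therefore $F=F_{\psi,L}$.

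\textbf{Converse.} Let $F=F_{\psi,L}$ with the stated data. Then $F_*\partial_w=\partial_w$. For $\eta$ we need
\[
F^*\eta_{V,S}=\dd w+\tfrac{1}{\sqrt{2K}}\dd\Im L-\tfrac{1}{\sqrt{2K}}\,\psi^*(*_0\dd V)
\]
to equal $\eta_{v,s}$. Two facts give this. First, the Hodge star on $1$-forms is conformally invariant, so $\psi^*(*_0\dd V)=*_0\dd(V\circ\psi)=*_0\dd v-*_0\dd\Re L$. Second, the Cauchy--Riemann equations give $*_0\dd\Re L=\dd\Im L$ for the convention $*_0\dd x=\dd y$, which is the one consistent with the paper's formula $\alpha=*_0\dd U$. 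The two $\dd\Im L$ terms then cancel.

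The construction above shows $q_{V,S}\circ F=q_{v,s}$. For the conformal factor, the identity $V\circ\psi=v-\Re L$ with $\Re L$ harmonic gives $\Delta_0 v=\Delta_0(V\circ\psi)=|\psi'|^2(\Delta_0V)\circ\psi$. Therefore $\psi^*(A_{V,S}|\dd z|^2)=A_{v,s}|\dd z|^2$. Together with the preservation of $\eta$, this gives $F^*g_{V,S}=g_{v,s}$. Finally, $\varphi$ is preserved because $\psi$ is holomorphic.

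\textbf{Main difficulty.} The step I expect to need the most care is the passage through the zeros of $s$. One must show that the equation for $q$ forces equal zero orders, so that $h$ really lies in $\mathcal O^*(\D)$ and $L$ exists globally. This is exactly what Lemma~\ref{lem:singular-potential-gauge} supplies, together with the density of $\{s\neq0\}$. A secondary point is fixing the $2\pi$-ambiguity in $c$, which the connectedness of $\D$ settles as described above.
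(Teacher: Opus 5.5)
Your route works, but it contains a sign slip that you need to fix. Apart from that slip, it differs from the paper's proof in the order of the two key steps.

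\textbf{The sign slip.} With \(h=s/(S\circ\psi)\), the identity \(\mathfrak u_{v,s}=\mathfrak u_{V,S}\circ\psi\) gives \(V\circ\psi=v+\log|h|=v+\Re L\), not \(v-\log|h|\). Lemma~\ref{lem:singular-potential-gauge} is stated for the quotient \((S\circ\psi)/s=1/h\), and you did not account for the inversion.

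Your forward direction is unaffected. The phase identity \(e^{-\ii\sqrt{2K}c}=\overline h/|h|\) is in fact only true with the correct sign; with yours it would read \(e^{-\ii\sqrt{2K}c}=|h|\,\overline h\).

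The converse, however, fails as written. From \(V\circ\psi=v-\Re L\) you get \(\psi^*(*_0\dd V)=*_0\dd v-\dd\Im L\), so the two \(\dd\Im L\) terms add instead of cancelling. With \(V\circ\psi=v+\Re L\) you get \(\psi^*(*_0\dd V)=*_0\dd v+\dd\Im L\). Then the terms cancel and \(F^*\eta_{V,S}=\eta_{v,s}\), as you intended.

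A minor point: \(\rho\) depends on \((g,\xi)\) and also on the orientation of \(\xi^\perp\) fixed by \(\varphi\). Swapping \(\partial\) and \(\bar\partial\) replaces \(\rho\) by \(\overline\rho\). This is harmless because \(F\) preserves \(\varphi\).

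\textbf{Comparison with the paper.} The paper first uses \(F^*\eta_{V,S}=\eta_{v,s}\). This gives \(\dd f=(2K)^{-1/2}*_0\dd(V\circ\psi-v)\), so \(L:=V\circ\psi-v+\ii\sqrt{2K}f\) satisfies the Cauchy--Riemann equations. Only then does it use \(\rho_{v,s}=\rho_{V,S}\circ F\) to get \(s=e^L(S\circ\psi)\).

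\emph{What the paper's order buys.} Because \(L\) is holomorphic from the outset, \(s/(S\circ\psi)=e^L\) is automatically nowhere zero. Also \(\Im L=\sqrt{2K}f\) holds exactly by construction. So the paper needs neither Lemma~\ref{lem:singular-potential-gauge} nor a choice of logarithm branch.

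\emph{What your order buys.} You use only the \(q\)-identity, so you must handle the zeros through the singular-potential lemma and fix the \(2\pi\)-ambiguity by connectedness. In return, your argument shows something extra for the forward direction. Preservation of \(\rho\), together with \(F_*\xi=\xi\) and holomorphy of \(\psi\), already forces \(F=F_{\psi,L}\). Preservation of \(\eta\) then follows as a consequence.
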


\begin{proof}
Let \(F:\mathcal M_K(v,s)\to\mathcal M_K(V,S)\) be a structure-preserving isomorphism.
\begin{enumerate}
  \item We can write \(F(z,w)=(\psi(z),w+f(z))\), and \(\psi\in\Aut(\D)\).\\
  \hspace*{2em} Proof : Since $F_*\partial_w=\partial_w$, we can write $F(z,w)=(\psi(z),w+f(z))$.
  Here, $f\in C^\infty(\D,\R)$, and $\psi:\D\to\D$ is a diffeomorphism.
  Preservation of $\eta_K$ implies that $F_*$ preserves the horizontal distribution.
  Since $\varphi_K$ on the horizontal distribution is the lift of the standard complex structure on the base,
  projecting $F_*\varphi_K=\varphi_KF_*$ to the base gives $\dd\psi\circ\ii=\ii\circ\dd\psi$.
  Therefore, $\psi$ is holomorphic, and $\psi\in\Aut(\D)$.

  \item We have \(L:=V\circ\psi-v+\ii\sqrt{2K} f\in\mathcal O(\D)\).\\
  \hspace*{2em} Proof :  By preservation of \(\eta\) and the fact that pullback by a holomorphic map commutes with the Hodge star on one-forms, we have
  \begin{align*}
    &\eta_{v,s}=\dd w-\frac{1}{\sqrt{2K}} *_0\dd v
    =F^*\eta_{V,S}\\
    &=\dd(w+f)-\frac{1}{\sqrt{2K}}\psi^*(*_0\dd V)
    =\dd w+\dd f-\frac{1}{\sqrt{2K}} *_0\dd(V\circ\psi)\\
    &\therefore\ \dd f=\frac{1}{\sqrt{2K}} *_0\dd(V\circ\psi-v).
  \end{align*}
  In general, for a complex function $h=a+ib$, the Cauchy--Riemann equations can be written as $\dd b=\ast_0\dd a$, so $L:=V\circ\psi-v+\ii\sqrt{2K} f$ satisfies the Cauchy--Riemann equations.

  \item We have $\mathfrak u_{v,s}=\mathfrak u_{V,S}\circ\psi,\ \e^L=\frac{s}{S\circ\psi}\in\mathcal O^*(\D)$ and $F(z,w)=\left(\psi(z),w+\frac{\Im L(z)}{\sqrt{2K}}\right)$.\\
  \hspace*{2em} Proof :  Since \(F\) is structure-preserving, $    \rho_{v,s}=\rho_{V,S}\circ F$.
  Hence, by \eqref{eq:K positive rho q}, we have
  \[
    \frac{1+\e^{v-\ii\sqrt{2K} w}\overline s}
         {1-\e^{v-\ii\sqrt{2K} w}\overline s}
    =
    \frac{1+\e^{V\circ\psi-\ii\sqrt{2K}(w+f)}
                    \overline{S\circ\psi}}
         {1-\e^{V\circ\psi-\ii\sqrt{2K}(w+f)}
                    \overline{S\circ\psi}}.
  \]
  The map \(q\mapsto(1+q)/(1-q)\) is injective, so we obtain
  \begin{align*}
    &\e^{v-\ii\sqrt{2K} w}\overline s
    =\e^{V\circ\psi-\ii\sqrt{2K}(w+f)}\overline{S\circ\psi}\\
    &\therefore\ s=\e^{V\circ\psi-v+\ii\sqrt{2K} f}(S\circ\psi)
    =\e^L(S\circ\psi).
  \end{align*}
  Also, $\mathfrak u_{v,s}=v+\log|s|=v+\Re L+\log|S\circ\psi|=V\circ\psi+\log|S\circ\psi|=\mathfrak u_{V,S}\circ\psi$.
  Finally, since \(\Im L=\sqrt{2K} f\), we also see that $F(z,w)=\left(\psi(z),w+\frac{1}{\sqrt{2K}}\Im L(z)\right)$.
\end{enumerate}
Finally, we verify that the metric is preserved.
Since preservation of $\eta_K$ is clear from reversing the above calculations, it suffices to verify that the horizontal part is preserved.
Since $L$ and $\psi$ are holomorphic, we have $\Delta_0v=\Delta_0(V\circ\psi-\Re L) =|\psi'|^2(\Delta_0V)\circ\psi$.
Thus we obtain
\begin{align*}
 &F^*(A_{K,V,S}|dz|^2)=F^*\!\left(
   \frac{\Delta_0V}{2K}
   \frac{|1-\e^{V-\ii\sqrt{2K}w}\overline S|^2}
        {1-\e^{2V}|S|^2}|\dd z|^2
   \right)\\
 &=\frac{|\psi'|^2(\Delta_0V)\circ\psi}{2K}
   \frac{|1-\e^{V\circ\psi-\ii\sqrt{2K}(w+f)}\overline{S\circ\psi}|^2}
        {1-\e^{2V\circ\psi}|S\circ\psi|^2}|\dd z|^2
 =\frac{\Delta_0v}{2K}
   \frac{|1-\e^{v-\ii\sqrt{2K}w}\overline s|^2}
        {1-\e^{2v}|s|^2}|\dd z|^2\\
  &=A_{K,v,s}|dz|^2
\end{align*}
\end{proof}

Let \((v,s)\) be an admissible pair, let \(\mathfrak u:=\mathfrak u_{v,s}\) be its singular potential, and set $G_{\mathfrak u}:=\{\psi\in\Aut(\D):\mathfrak u\circ\psi=\mathfrak u\}$.
Then the set
\begin{equation}
  \left\{(\psi,L):\psi\in G_{\mathfrak u},\ 
     e^L=\frac{s}{s\circ\psi}\in\mathcal O^*(\D)\right\}
\end{equation}
can be identified with \(\StrAut(\mathcal M_K(v,s))\) under the correspondence \((\psi,L)\mapsto F_{\psi,L}\), and the composition is given by
\begin{align*}
 F_{\psi,L}\circ F_{\phi,N}(z,w)
 &=F_{\psi,L}\bigl(\phi(z),w+\lambda^{-1}\Im N(z)\bigr),\ (\lambda=\sqrt{2K})\\
 &=\bigl(\psi\circ\phi(z),
         w+\lambda^{-1}\Im N(z)+\lambda^{-1}\Im L(\phi(z))\bigr)\\
 &=\bigl(\psi\circ\phi(z),
         w+\lambda^{-1}\Im(N+L\circ\phi)(z)\bigr)
=F_{\psi\circ\phi,\,N+L\circ\phi}.
\end{align*}
Thus, the product in this representation is
\begin{equation}
  (\psi,L)(\phi,N)=(\psi\circ\phi,N+L\circ\phi).
\end{equation}
The identity is $(\id,0)$, and the inverse is $(\psi,L)^{-1}=(\psi^{-1},-L\circ\psi^{-1})$.

\subsection{\texorpdfstring{$K\leq0$}{K <= 0} case}

\begin{proposition}
\label{prop:nonpositive-isomorphisms}
A structure-preserving diffeomorphism $F:\mathcal M_K(U)\to\mathcal M_K(V)$ is given by $F(z,w)=(\psi(z),w)$, where $\psi\in\Aut(\D)$ satisfies $U=V\circ\psi$.
Therefore, we have
\[
 \StrAut(\mathcal M_K(U)):=\{\psi\in\Aut(\D):U\circ\psi=U\}.
\]
\end{proposition}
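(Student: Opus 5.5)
I would follow the $K>0$ argument of Proposition~\ref{prop:generalized-rigidity}; the only difference is that for $K\le0$ the invariant $\rho$ determines the fiber coordinate $w$ itself, not just its phase. Let $F:\mathcal M_K(U)\to\mathcal M_K(V)$ be structure-preserving.

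\textbf{Step 1 (form of $F$).} Since $F_*\partial_w=\partial_w$, we can write $F(z,w)=(\psi(z),w+f(z))$ with $\psi$ a diffeomorphism of $\D$ and $f\in C^\infty(\D,\R)$. Preservation of $\eta_K$ means $F_*$ preserves the horizontal distribution. Then $F_*\varphi_K=\varphi_KF_*$, projected to the base, says $\psi$ is holomorphic. Hence $\psi\in\Aut(\D)$.

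\textbf{Step 2 (use $\rho$ to force $f=0$ and $U=V\circ\psi$).} An isomorphism preserves the spin coefficient $\rho$, so $\rho_U=\rho_V\circ F$. Lemma~\ref{lem:canonical-nonpositive} expresses $\rho$ through $w\mp\ii U$:
\begin{itemize}
\item for $K=0$, $\rho=(w-\ii U)^{-1}$;
\item for $K<0$, $\rho=\sqrt{|K|/2}\tanh(\sqrt{|K|/2}(w+\ii U))$, equivalently $w+\ii U$ is the principal logarithm in \eqref{eq:canonical-nonpositive}.
\end{itemize}
In both cases the map from $\rho$ to $w\pm\ii U$ is single-valued on the relevant domain. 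For $K=0$ this is inversion on $\C\setminus\{0\}$. For $K<0$ it is the principal $\Log$ on the strip $|\Im|<\pi/\sqrt{2|K|}$, where $\tanh$ is injective. Therefore
\[
 w\pm\ii U(z)=(w+f(z))\pm\ii V(\psi(z)).
\]
Comparing real and imaginary parts gives $f\equiv0$ and $U=V\circ\psi$.

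The main obstacle is this injectivity step. For $K<0$ I must check that both $w+\ii U$ and $w+f+\ii V\circ\psi$ lie in the strip $|\Im|<\pi/\sqrt{2|K|}$. This holds by admissibility, so the principal branch applies on both sides and no $2\pi\ii$ ambiguity arises.

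\textbf{Step 3 (converse).} Let $F(z,w)=(\psi(z),w)$ with $\psi\in\Aut(\D)$ and $U=V\circ\psi$. Since holomorphic pullback commutes with $*_0$ on one-forms, $F^*\eta_K$ built from $V$ equals $\eta_K$ built from $U$.

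For the horizontal part I need $\Delta_0U=|\psi'|^2(\Delta_0V)\circ\psi$.
\begin{itemize}
\item For $K=0$, $A_{0,U}=-\tfrac{\Delta_0U}{2U}(w^2+U^2)$. This gives $F^*(A_{0,V}|\dd z|^2)=A_{0,U}|\dd z|^2$.
\item For $K<0$, the Laplacian rule gives $H_U=|\psi'|^2H_V\circ\psi$, first on the dense set $\{U\ne0\}$ and then everywhere by continuity, using uniqueness of $H_U$. Hence $A_{K,U}|\dd z|^2=F^*(A_{K,V}|\dd z|^2)$.
\end{itemize}
$F$ preserves $\xi$, $\eta_K$ and the metric, and is holomorphic on the base, so $\varphi_K$ is preserved. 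Specializing to $V=U$ yields the stated description of $\StrAut(\mathcal M_K(U))$.
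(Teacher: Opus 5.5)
Your proof is correct, but for $K\le0$ it takes a different route from the paper's.

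\textbf{The paper's route.} It never uses $\rho$ here. From $F^*\eta_K=\eta_K$ it extracts $\dd f=\tau_K*_0\dd(U-V\circ\psi)$ and $\Delta_0U=|\psi'|^2(\Delta_0V)\circ\psi$. It then writes out $A_{K,V}(\psi(z),w+f(z))|\psi'(z)|^2=A_{K,U}(z,w)$ and compares coefficients in $w$: powers of $w$ for $K=0$, and the $\sinh$, $\cosh$ and constant terms for $K<0$. For $K<0$ this comparison only gives $\cos(\sqrt{2|K|}U)=\cos(\sqrt{2|K|}\,V\circ\psi)$. So the paper needs a second step: it combines the Laplacian identity with admissibility to get equality of the sines, and only then concludes $U=V\circ\psi$ from $|\sqrt{2|K|}U|<\pi$.

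\textbf{Your route.} You carry the $K>0$ mechanism $\rho_U=\rho_V\circ F$ over to $K\le0$. Injectivity of inversion on $\C\setminus\{0\}$, or of $\tanh$ on $|\Im\zeta|<\pi/2$, gives $f=0$ and $U=V\circ\psi$ in one stroke and treats all signs of $K$ uniformly.

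\textbf{What you owe.} You need the value of $\rho$ on the model itself, in the model's coordinates $w$ and $U$. Lemma~\ref{lem:canonical-nonpositive} is stated for an abstract manifold, so you should verify it directly on $\mathcal M_K(U)$. Take $\Theta=\tfrac12\partial_w\log A_{K,U}$ from \eqref{eq:lie-g}, and read off $\omega$ from $\dd\eta_K=-2\omega A_{K,U}\,\dd x\wedge\dd y$. This indeed gives $\rho=(w-\ii U)^{-1}$ for $K=0$ and $\rho=\sqrt{|K|/2}\tanh(\sqrt{|K|/2}(w+\ii U))$ for $K<0$. It is the same kind of implicit converse computation the paper itself relies on for $K>0$.

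\textbf{Trade-off.} The paper's comparison uses only the defining formulas for $g_K$ and $\eta_K$. It also yields $H_U=|\psi'|^2H_V\circ\psi$ directly in the forward direction. You recover that relation only in the converse, via density of $\{U\neq0\}$, which is also valid.
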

\begin{proof}
As in the proof of Proposition~\ref{prop:generalized-rigidity}, we can write $F(z,w)=(\psi(z),w+f(z)),\ f\in C^\infty(\D,\R),\ \psi\in\Aut(\D)$.
Also, since $\psi$ is a diffeomorphism, $|\psi'|^2>0$.

Put $\tau_0=1$, and $\tau_K=-1$ for $K<0$.
Since pullback by a holomorphic map commutes with the Hodge star on one-forms, the condition that $F$ preserves $\eta_K$ yields the following identities.
\begin{align}
 &\dd w+\tau_K*_0\dd U
 =F^*(\dd w+\tau_K*_0\dd V)
 =\dd w+\dd f+\tau_K*_0\dd(V\circ\psi)\\
 &\dd f=\tau_K*_0\dd(U-V\circ\psi)\\
 &\Delta_0U=\Delta_0(V\circ\psi)
 =|\psi'|^2(\Delta_0V)\circ\psi.
 \label{eq:nonpositive-eta-comparison}
\end{align}
Furthermore, preservation of $g_K$ and $\eta_K$ implies that the horizontal coefficients satisfy the following relation for every $w\in\R$.
\begin{equation}
 A_{K,V}(\psi(z),w+f(z))|\psi'(z)|^2=A_{K,U}(z,w)
 \label{eq:nonpositive-horizontal-comparison}
\end{equation}

When $K=0$, using \eqref{eq:nonpositive-eta-comparison} to write out \eqref{eq:nonpositive-horizontal-comparison} gives
\begin{align*}
-\frac{\Delta_0U}{2(V\circ\psi)}
       \{(w+f)^2+(V\circ\psi)^2\}
 =-\frac{\Delta_0U}{2U}(w^2+U^2)\\
 \frac{1}{V\circ\psi}w^2
 +\frac{2f}{V\circ\psi}w
 +\frac{f^2}{V\circ\psi}+V\circ\psi
 =\frac{1}{U}w^2+U.
\end{align*}
Comparing the coefficients of $w^2$ and $w$ gives $U=V\circ\psi$ and $f=0$.

When $K<0$, equation~\eqref{eq:nonpositive-horizontal-comparison} is
\begin{align*}
 &(H_V\circ\psi)|\psi'|^2
 \{\cosh(\sqrt{2|K|}(w+f))+
                  \cos(\sqrt{2|K|}(V\circ\psi))\}
 =H_U\{\cosh(\sqrt{2|K|}w)+\cos(\sqrt{2|K|}U)\}.
\end{align*}
We have
\begin{align*}
 \frac{\textrm{L.H.S}}{(H_V\circ\psi)|\psi'|^2}
 =\cosh(\sqrt{2|K|}f)\cosh(\sqrt{2|K|}w)
 +\sinh(\sqrt{2|K|}f)\sinh(\sqrt{2|K|}w)
 +\cos(\sqrt{2|K|}(V\circ\psi)).
\end{align*}
Comparing the coefficients of $\sinh(\sqrt{2|K|}w)$ gives $ (H_V\circ\psi)|\psi'|^2\sinh(\sqrt{2|K|}f)=0$. Hence $f=0$.
Next, comparing the coefficients of $\cosh(\sqrt{2|K|}w)$ and the constant terms gives
\begin{align*}
H_U=(H_V\circ\psi)|\psi'|^2,\quad
\cos(\sqrt{2|K|}U)&=\cos(\sqrt{2|K|}(V\circ\psi)).
\end{align*}
Also, \eqref{eq:nonpositive-eta-comparison} and the admissibility condition give
\begin{align*}
 &\sqrt{2|K|}\,H_U\sin(\sqrt{2|K|}U)
 =\Delta_0U
 =|\psi'|^2(\Delta_0V)\circ\psi
 =\sqrt{2|K|}\,|\psi'|^2(H_V\circ\psi)
                       \sin(\sqrt{2|K|}(V\circ\psi))\\
 &=\sqrt{2|K|}\,H_U\sin(\sqrt{2|K|}(V\circ\psi)).
\end{align*}
Thus, $\sin(\sqrt{2|K|}U)=\sin(\sqrt{2|K|}(V\circ\psi))$.
Since both the sines and cosines agree, $e^{\ii\sqrt{2|K|}U} =e^{\ii\sqrt{2|K|}(V\circ\psi)}$. Also, both arguments belong to $(-\pi,\pi)$ by the admissibility condition, so $U=V\circ\psi$.
Thus, for $K\le0$, we have $F(z,w)=(\psi(z),w)$.
Also, reversing the above calculations makes it clear that a map of this form is a structure-preserving diffeomorphism.
\end{proof}

\section{Conclusions and discussions}
In this paper, we classified three-dimensional noncompact trans-Sasakian $\eta$-Einstein manifolds under the assumption that the characteristic vector field is complete.
In the proper case, the universal cover is diffeomorphic to $\mathbb D\times\mathbb R$, and its metric is explicitly described by admissible data depending on the sign of $K=\operatorname{Ric}(\xi,\xi)$.
We also determined the structure-preserving isomorphisms between the model spaces and obtained a description of the corresponding quotient manifolds.

These results complement the classification in the compact case obtained in \cite{Matsuno2025}.
Unlike in the compact case, proper trans-Sasakian $\eta$-Einstein manifolds exist in the noncompact case and have infinitely many degrees of freedom.
Furthermore, as the explicitly constructed families of geodesically complete models show, this flexibility is not lost even when geodesic completeness is imposed.
On the other hand, there are strong restrictions on the topology of the universal cover and the conformal type of the orbit space of the characteristic flow.
Mixed rank structures are also possible when $K<0$, and we constructed geodesically complete examples of them.

Furthermore, we also considered the characteristic flow on quotient manifolds.
When the characteristic flow is Palais-nonregular, its orbit structure is classified into the three types (SF), (IR$_0$), and (IR$_1$).
We also characterized the Deck groups corresponding to each type using automorphisms of the model spaces.

\section*{Acknowledgements}
I had many discussions with Fumihiro Ueno, and he also checked the calculations.
He pointed out ways to improve several calculations and made many contributions toward completing this paper.
I express our gratitude here.

During the preparation of this manuscript, AI was used to check proofs, search the literature for prior work, identify existing theorems applicable to the proofs, and proofread the English. AI was not used to generate any theorems or proofs.

This work was partly supported by MEXT Promotion of Distinctive Joint Research Center Program JPMXP0723833165 and Osaka Metropolitan University Strategic Research Promotion Project (Development of International Research Hubs).

\appendix

\section{Geodesically complete model spaces}
\label{app:Geodesically complete model spaces}
Here, we give sufficient conditions for geodesic completeness of the model spaces and explicit complete families.

\begin{proposition}
\label{prop:radial-completeness-criterion}
Let the model space be $\mathcal M_K(v,s)$ or $\mathcal M_K(U)$.
Define $b_K\in C^0(\D,\R_{>0})$ by
\begin{equation}
 b_K(z)=
 \begin{cases}
 \displaystyle\frac{\Delta_0v(z)}{2K}
       \frac{1-e^{v(z)}|s(z)|}{1+e^{v(z)}|s(z)|},&K>0,\\
 \displaystyle-\frac12U(z)\Delta_0U(z),&K=0,\\
 H_U(z)\{1+\cos(\sqrt{2|K|}U(z))\},&K<0.
  \end{cases}
 \label{eq:horizontal-lower-bound}
\end{equation}
Then, if $b_K(z)|\dd z|^2$ is complete, the corresponding model is geodesically complete.
In particular, for $0\leq r<1$, we define $m_\bullet\in C^0([0,1),\R_{>0})$ as
\begin{align}
 m_+(r)&=\min_{|z|=r}
   \left\{\Delta_0v\frac{1-e^v|s|}{1+e^v|s|}\right\},
 \label{eq:mplus-complete}\\
 m_0(r)&=\min_{|z|=r}\{-U\Delta_0U\},
 \label{eq:mzero-complete}\\
 m_-(r)&=\min_{|z|=r}\{\sqrt{2|K|}H_U(1+\cos(\sqrt{2|K|}U))\}.
 \label{eq:mminus-complete}
\end{align}
For some $r_0\in(0,1)$, if we have
\begin{equation}
 \int_{r_0}^1\sqrt{m_\bullet(r)}\,\dd r=\infty,
 \label{eq:radial-length-divergence}
\end{equation}
then the corresponding model space is geodesically complete.
\end{proposition}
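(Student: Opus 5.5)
The strategy is to bound the model metric from below by a metric of product form and then use the elementary fact that a Riemannian metric dominating a complete one is itself complete. Since $g_K=A|\dd z|^2+\eta_K^2$, the first step is a pointwise lower bound $A(z,w)\ge b_K(z)$ that holds uniformly in $w$.

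For $K>0$, write $q=e^{v-\ii\sqrt{2K}w}\bar s$, so that $|q|=e^v|s|=:r<1$. Then $|1-q|^2\ge(1-r)^2$ and $1-|q|^2=(1-r)(1+r)$. These give $A\ge\frac{\Delta_0v}{2K}\frac{1-r}{1+r}=b_K$.

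For $K=0$, we have $w^2+U^2\ge U^2$, hence $A_{0,U}\ge-\frac{\Delta_0U}{2U}U^2=b_0$.

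For $K<0$, use $\cosh\ge1$ to get $A_{K,U}\ge H_U(1+\cos(\sqrt{2|K|}U))=b_K$. Positivity of $b_K$ follows from admissibility: $\Delta_0v>0$ and $r<1$ for $K>0$; $U>0$ and $\Delta_0U<0$ for $K=0$; and $H_U>0$ with $|\sqrt{2|K|}U|<\pi$ for $K<0$.

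Next, consider the projection $\pi:\D\times\R\to\D$. The bound above gives $g_K\ge\pi^*(b_K|\dd z|^2)$. To conclude completeness, I would use the metric characterization rather than geodesics directly. Let $\gamma$ be a $g_K$-Cauchy sequence, or equivalently a finite-length curve that leaves every compact set. Its projection to $\D$ has $b_K|\dd z|^2$-length bounded by its $g_K$-length. Since $b_K|\dd z|^2$ is complete, the projection stays in a compact set $C\subset\D$.

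The remaining task, which I expect to be the main obstacle, is to bound the fiber coordinate $w$. Along the curve,
\[
 |\dot w|\le|\eta_K(\dot\gamma)|+|\beta(\dot z)|,
\]
where $\beta$ denotes the base one-form $\mp(2K)^{-1/2}*_0\dd v$ or $\pm*_0\dd U$. On $C$ we have $|\beta(\dot z)|\le c|\dot z|\le c'\sqrt{b_K}|\dot z|$, because $b_K$ is bounded below by a positive constant on the compact set $C$. It follows that $\int|\dot w|\le(1+c')\,\mathrm{length}_{g_K}(\gamma)<\infty$, so $w$ stays bounded. Hence the curve lies in the compact set $C\times[-R,R]$, which is a contradiction. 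By Hopf--Rinow, $g_K$ is complete.

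For the radial criterion, let $\tilde b(z)=m_\bullet(|z|)$ up to a fixed positive constant factor ($1/2K$, $1/2$, or $1/\sqrt{2|K|}$). This gives $b_K\ge c\,m_\bullet(|z|)$. Continuity and positivity of $m_\bullet$ follow because it is a minimum of a positive continuous function over circles.

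Now take any curve in $\D$ that leaves every compact set, so that $|z|\to1$. It must cross the annulus $r_0\le|z|<1$ radially. Its length is therefore at least
\[
 \int\sqrt{c\,m_\bullet(|z|)}\,\bigl|\dd|z|\bigr|\ge\sqrt c\int_{r_0}^1\sqrt{m_\bullet(r)}\,\dd r=\infty .
\]
So every divergent curve in $\D$ has infinite $b_K|\dd z|^2$-length, which means $b_K|\dd z|^2$ is complete, and the first part of the argument applies.
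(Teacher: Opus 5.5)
Your proposal is correct and follows essentially the paper's argument: the same pointwise bound $A_K(z,w)\ge b_K(z)$, the same control of the fiber coordinate via $|\dot w|\le c_1|\dot\gamma|_g$ once the projection is confined to a compact set, and the same radial comparison $b_K\ge C_K\,m_\bullet(|z|)$. The paper phrases the conclusion as ``finite-length curves converge'' and ``$J(|z(t)|)$ stays bounded, where $J(r)=\int_0^r\sqrt{m_\bullet}$'', while you phrase it as ``divergent curves have infinite length''; these are equivalent, and your only slip is cosmetic: if the curve starts in $|z|>r_0$ the lower bound should read $\sqrt{c}\int_{\max(r_0,|z(0)|)}^1\sqrt{m_\bullet(r)}\,\mathrm{d}r$, which is still infinite because $m_\bullet$ is continuous on $[0,1)$.
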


\begin{proof}
Write $g=A_K(z,w)|\dd z|^2+(\dd w+\alpha(z))^2$.
First, we show that $A_K(z,w)\geq b_K(z)$ for every $w\in\R$.
For $K>0$, we have $|q|=e^v|s|<1$, and the claim follows from
\[
 \frac{|1-q|^2}{1-|q|^2}
 \geq\frac{(1-|q|)^2}{(1-|q|)(1+|q|)}
 =\frac{1-|q|}{1+|q|}.
\]
For $K=0$, it suffices to use $w^2+U^2\geq U^2$ and $-\Delta_0U/(2U)>0$.
For $K<0$, we have
\[
 A_K=H_U\{\cosh(\sqrt{2|K|}w)+\cos(\sqrt{2|K|}U)\}
 \geq H_U\{1+\cos(\sqrt{2|K|}U)\}=b_K.
\]

Suppose that a piecewise \(C^1\) curve $\gamma:[0,T)\longrightarrow\D\times\R,\ \gamma(t)=(z(t),w(t))$ has finite length with respect to $g$.
Since $|\dot\gamma|_g^2=A_K(z,w)|\dot z|^2+\bigl(\dot w+\alpha_z(\dot z)\bigr)^2\ge b_K(z)|\dot z|^2$,
for \(0\le s<t<T\), we have
$d_{b_K}(z(s),z(t))\le\int_s^t\sqrt{b_K(z(u))}\,|\dot z(u)|\,\dd u\le\int_s^t|\dot\gamma(u)|_g\,\dd u\to0\ (t,s\to T)$.
Thus, \(z(t)\) is Cauchy with respect to \(d_{b_K}\), and by the completeness assumption there exists \(z_*\in\D\) satisfying $d_{b_K}(z(t),z_*)\longrightarrow0$.
Let $C$ be a sufficiently small compact set containing $z_*$.
On $C$, $d_{b_K}$ can be bounded below by a constant multiple of the Euclidean distance, so \(z(t)\to z_*\) also holds in the Euclidean sense.
Since \(\alpha\) is smooth and \(C\) is compact, there exists a suitable constant $c_1$ such that $|\dot w|\le c_1|\dot\gamma|_g$.
Therefore, $|w(t)-w(s)|\le c_1\int_s^t|\dot\gamma(u)|_g\,\dd u
\longrightarrow0,\ (t,s\to T)$, so \(w(t)\) is Cauchy in the real numbers and hence converges to \(w_*\in\R\).
Thus, $\gamma(t)\longrightarrow(z_*,w_*)\in\D\times\R$, and every curve of finite \(g\)-length has an endpoint.
Thus, $g$ is geodesically complete.

Set
\[
 C_K=\begin{cases}(2K)^{-1},&K>0,\\1/2,&K=0,\\(2|K|)^{-1/2},&K<0\end{cases}.
\]
Then $b_K(z)\geq C_Km_\bullet(|z|)$.
Set $J(r)=\int_0^r\sqrt{m_\bullet(u)}\,\dd u$.
Along any curve $z(t)$ of finite $d_{b_K}$-length, we have almost everywhere
\[
 \left|\frac{\dd}{\dd t}J(|z(t)|)\right|
 \leq\sqrt{m_\bullet(|z(t)|)}|\dot z(t)|
 \leq C_K^{-1/2}\sqrt{b_K(z(t))}|\dot z(t)|.
\]
Therefore, $J(|z(t)|)$ is bounded, and $z(t)$ stays in some closed disk $\{|z|\leq r_*\}\Subset\D$.
Since $b_K$ has a positive lower bound, a finite-length curve also has an endpoint limit at an interior point of $\D$ in the Euclidean sense.
Thus, the length metric of $b_K|\dd z|^2$ is complete, and the first part of the argument can be applied.
\end{proof}

\begin{proposition}
\label{prop:explicit-complete-families}
Set $z=x+\ii y\in\D,\ r=|z|$, and for $B_0>3/2$ and sufficiently small $\varepsilon\in\R$, define
\begin{align*}
V_\varepsilon(z)=\frac{(1+\varepsilon h(z))}{\sqrt{B_0-\log(1-r^2)}}.
\end{align*}
Here, $h\in C^\infty(\overline\D,\R)$ is fixed arbitrarily.
\begin{enumerate}[label=\textup{(\roman*)}]
\item Assume $K>0$.
For every $s\in\mathcal O(\D),\ s\not\equiv0$ satisfying $|s|\leq1$,
\[
 (v,s)=(-\sqrt{2K}\,V_\varepsilon,s)
\]
is an admissible pair, and $\mathcal M_K(v,s)$ is geodesically complete.
In particular, for $N\in\Z_{\geq0}$, $\theta\in\R$, distinct points
$a_1,\ldots,a_N\in\D$, and positive integers $m_1,\ldots,m_N$, choosing the finite Blaschke product
\[
 s(z)=e^{\ii\theta}\prod_{j=1}^N
       \left(\frac{z-a_j}{1-\overline{a_j}z}\right)^{m_j}
\]
realizes $s$ with zeros at $a_i$ of order $m_i$.

\item Assume $K=0$. $U=V_\varepsilon$ is an admissible potential, and
$\mathcal M_0(U)$ is geodesically complete.

\item Assume $K<0$ and $B_0>\max\{3/2,2|K|/\pi^2\}$.
Then $U=\pi/\sqrt{2|K|}-V_\varepsilon$ is a positive admissible potential, and
$\mathcal M_K(U)$ has rank three and is geodesically complete.

\item Assume $K<0$ and $h(-x,y)=h(x,y)$.
Fix $0<\delta<\pi/\sqrt{2|K|}$.
Set
\begin{equation}
 a_\varepsilon=1-V_\varepsilon,\qquad
 U_\varepsilon=\delta x\,a_\varepsilon,\qquad
 Q_\varepsilon=
 \frac{\Delta_0a_\varepsilon+2\,\partial_xa_\varepsilon/x}{a_\varepsilon}.
 \label{eq:mixed-complete-data}
\end{equation}
At $x=0$, extend $\partial_xa_\varepsilon/x$ smoothly.
For sufficiently small $|\varepsilon|$, we have $0<a_\varepsilon<1$ and
$Q_\varepsilon>0$, and the function defined by
\begin{equation}
 H_{U_\varepsilon}
 =Q_\varepsilon\frac{U_\varepsilon}{\sqrt{2|K|}\sin(\sqrt{2|K|}U_\varepsilon)},\qquad
 H_{U_\varepsilon}|_{x=0}=\frac{Q_\varepsilon}{2|K|}
 \label{eq:mixed-complete-H}
\end{equation}
is a smooth positive function.
$U_\varepsilon$ is an admissible potential, and $\mathcal M_K(U_\varepsilon)$ is
a geodesically complete mixed rank model.
Its rank is one on $x=0$ and three on $x\ne0$.
\end{enumerate}
\end{proposition}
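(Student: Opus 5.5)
The plan is to reduce everything to one radial computation for the unperturbed potential $V_0=(B_0+L)^{-1/2}$, where $L:=-\log(1-r^2)\geq0$. Then I show that the perturbation $\varepsilon h$ changes the relevant quantities only by a uniformly small relative amount. Finally I verify the hypothesis \eqref{eq:radial-length-divergence} of Proposition~\ref{prop:radial-completeness-criterion} in each case.

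\textbf{The radial computation.} Write $V_0=f(L)$ with $f(L)=(B_0+L)^{-1/2}$. We have $\Delta_0L=4/(1-r^2)^2$ and $|\nabla L|^2=4r^2/(1-r^2)^2$. The chain rule then gives
\[
\Delta_0V_0=f'(L)\Delta_0L+f''(L)|\nabla L|^2=\frac{-2(B_0+L)+3r^2}{(1-r^2)^2(B_0+L)^{5/2}}.
\]
Since $B_0>3/2\geq 3r^2/2$, this yields $-\Delta_0V_0\geq c\,(1-r^2)^{-2}(B_0+L)^{-3/2}$ with $c>0$.

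\textbf{Uniform control of the perturbation.} For $V_\varepsilon=V_0+\varepsilon hV_0$ I expand
\[
\Delta_0(hV_0)=h\Delta_0V_0+2\nabla h\cdot\nabla V_0+V_0\Delta_0h .
\]
Using $|\nabla V_0|\lesssim(1-r^2)^{-1}(B_0+L)^{-3/2}$ and $V_0=(B_0+L)^{-1/2}$, each term is bounded by a constant times $|\Delta_0V_0|$. The ratios are $O(1-r^2)$ and $O((1-r^2)^2(B_0+L))$, both bounded on $\D$, and $h$ is smooth up to $\partial\D$. Hence for $|\varepsilon|$ small, $-\Delta_0V_\varepsilon\geq\tfrac c2(1-r^2)^{-2}(B_0+L)^{-3/2}$ and $V_\varepsilon\asymp(B_0+L)^{-1/2}$.

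\textbf{The completeness integrand.} In every case I aim for the lower bound $m_\bullet(r)\gtrsim(1-r^2)^{-2}(B_0+L)^{-2}$. Then
\[
\sqrt{m_\bullet}\gtrsim\frac{1}{(1-r)\bigl(B_0-\log(1-r^2)\bigr)},
\]
whose integral near $r=1$ diverges like $\log\log\frac{1}{1-r}$. This gives \eqref{eq:radial-length-divergence}.

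\textbf{Cases (i)--(iii).}
\begin{enumerate}[label=\textup{(\roman*)}]
\item With $v=-\sqrt{2K}V_\varepsilon$ we get $\Delta_0v>0$. Also $v<0$ because $V_\varepsilon>0$, so $e^v|s|<1$ since $|s|\leq1$. For the integrand, $(1-e^v|s|)/(1+e^v|s|)\geq(1-e^{v})/2\gtrsim V_\varepsilon$, and multiplying by $\Delta_0v$ gives the bound above. The Blaschke product is holomorphic with $|s|<1$ on $\D$ and has the prescribed zeros and orders.
\item With $U=V_\varepsilon$ we have $U>0$ and $\Delta_0U<0$. Moreover $m_0=-U\Delta_0U$ directly satisfies the bound.
\item Here $\sqrt{2|K|}V_\varepsilon\leq\sqrt{2|K|}(1+|\varepsilon|\sup|h|)/\sqrt{B_0}<\pi$, by $B_0>2|K|/\pi^2$ and small $\varepsilon$. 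Hence $0<U<\pi/\sqrt{2|K|}$, and $\sin(\sqrt{2|K|}U)>0$. The quotient $H_U=\Delta_0U/(\sqrt{2|K|}\sin(\sqrt{2|K|}U))$ is therefore smooth and positive, because $\Delta_0U=-\Delta_0V_\varepsilon>0$. Rank three follows from Corollary~\ref{cor:mixed rank-locus}. For completeness put $\theta=\sqrt{2|K|}V_\varepsilon$, which is small near $\partial\D$. Then $\sin(\pi-\theta)\approx\theta$ and $1+\cos(\pi-\theta)\approx\theta^2/2$, so $m_-\approx\tfrac12\Delta_0U\,\theta\asymp(-\Delta_0V_\varepsilon)V_\varepsilon$, which is the same bound.
\end{enumerate}

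\textbf{Case (iv).} The identity $\Delta_0(xa)=x\Delta_0a+2\partial_xa$ gives $\Delta_0U_\varepsilon=Q_\varepsilon U_\varepsilon$.
\begin{itemize}
\item \emph{Smoothness of $Q_\varepsilon$.} The evenness $h(-x,y)=h(x,y)$ makes $a_\varepsilon$ even in $x$. So $\partial_xa_\varepsilon$ vanishes on $x=0$, and $\partial_xa_\varepsilon/x=\int_0^1\partial_x^2a_\varepsilon(tx,y)\,\dd t$ is smooth.
\item \emph{Positivity of $a_\varepsilon$ and $Q_\varepsilon$.} Since $B_0>1$, we have $V_\varepsilon<1$ and so $0<a_\varepsilon<1$. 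For $\varepsilon=0$, $\Delta_0a_0=-\Delta_0V_0>0$ and $-\partial_xV_0/x=-2f'(L)/(1-r^2)>0$. The perturbation in $\partial_x(hV_0)/x$ is again controlled relative to these via the integral formula, so $Q_\varepsilon>0$ for small $\varepsilon$.
\item \emph{Smoothness and positivity of $H_{U_\varepsilon}$.} Because $t\mapsto t/\sin t$ is smooth and positive on $|t|<\pi$, and $|U_\varepsilon|<\delta<\pi/\sqrt{2|K|}$, the function $H_{U_\varepsilon}$ is smooth and positive with the stated value on $x=0$. Hence $U_\varepsilon$ is admissible, and $U_\varepsilon\not\equiv0$.
\item \emph{Rank.} Corollary~\ref{cor:mixed rank-locus} gives rank one exactly on $x=0$.
\item \emph{Completeness.} We have $1+\cos(\sqrt{2|K|}U_\varepsilon)\geq1+\cos(\sqrt{2|K|}\delta)>0$, hence $m_-\gtrsim Q_\varepsilon\gtrsim\Delta_0a_\varepsilon\gtrsim(1-r^2)^{-2}(B_0+L)^{-3/2}$. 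This is even stronger than needed.
\end{itemize}

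\textbf{Main obstacle.} I expect the main difficulty to be making the perturbation estimates uniform up to $\partial\D$. This matters most for the term $\partial_x(hV_0)/x$ in (iv), where I would bound $\partial_x^2(hV_0)$ against $(1-r^2)^{-2}(B_0+L)^{-3/2}$ using the explicit derivatives of $f(L)$.
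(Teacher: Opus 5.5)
Your proposal is correct and takes essentially the same route as the paper. Both use the same explicit radial formula for $\Delta_0 F$. Both observe that the perturbation $\varepsilon h$ changes $\Delta_0 V_\varepsilon$ (and, in (iv), $\Delta_0 V_\varepsilon+\frac{2}{x}\partial_x V_\varepsilon$) only by a uniformly bounded relative factor. Both feed the same divergent integral into Proposition~\ref{prop:radial-completeness-criterion}. The only differences are cosmetic, in the elementary inequalities you use in (i), (iii) and (iv).

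The obstacle you flag in (iv) is harmless. Write $\partial_x(hV_0)/x=h\,\partial_xV_0/x+V_0\,\partial_xh/x$, where $\partial_xV_0/x=2f'(L)/(1-r^2)$ is explicit. Then the integral formula is needed only for the bounded quantity $\partial_xh/x$, which is how the paper handles it.
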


\begin{proof}
Define
\begin{equation}
 L(r)=B_0-\log(1-r^2),\qquad F(r)=L(r)^{-1/2},\qquad
 V_\varepsilon(z)=(1+\varepsilon h(z))F(r).
\end{equation}
We first give the estimates common to all items. Direct differentiation gives
\begin{equation}
 \Delta_0F
 =\frac{2\{\tfrac32r^2-L\}}{(1-r^2)^2L^{5/2}}<0,
 \label{eq:radial-F-laplacian}
\end{equation}
and, as $r\to1$,
\begin{equation}
 -\Delta_0F\sim\frac{2}{(1-r^2)^2L^{3/2}},\qquad
 F(-\Delta_0F)\sim\frac{2}{(1-r^2)^2L^2}.
 \label{eq:radial-F-asymptotics}
\end{equation}
The function $h$ and its derivatives of finite order are bounded on the closed disk.
Furthermore,
\[
 \Delta_0V_\varepsilon=\Delta_0F(1+\varepsilon R_h),\qquad
 R_h=h+\frac{2\langle\nabla F,\nabla h\rangle+F\Delta_0h}{\Delta_0F}.
\]
Here, $R_h$ is bounded on the disk.
Indeed, near the boundary,
\[
 \frac{|\nabla F|}{-\Delta_0F}=O(1-r^2),\qquad
 \frac{F}{-\Delta_0F}=O((1-r^2)^2L),
\]
and both tend to zero.
Thus, taking $|\varepsilon|$ sufficiently small gives
\begin{equation}
 \tfrac12F\leq V_\varepsilon\leq\tfrac32F,\qquad
 -\Delta_0V_\varepsilon\geq\tfrac12(-\Delta_0F),\qquad
 V_\varepsilon(-\Delta_0V_\varepsilon)
       \geq\tfrac14F(-\Delta_0F).
 \label{eq:complete-family-comparison}
\end{equation}
Also, since $L'(r)=2r/(1-r^2)$,
\begin{equation}
 \int^1\sqrt{F(-\Delta_0F)}\,\dd r
 \asymp\int^1\frac{\dd r}{(1-r^2)L(r)}
 \asymp\int^\infty\frac{\dd L}{L}=\infty.
 \label{eq:complete-family-integral}
\end{equation}
Here, $\asymp$ means that the integrands are bounded above and below by positive constant multiples of each other near the endpoint.

\medskip
\noindent\textbf{\textup{(i)} $K>0$ case}\\
We have $\Delta_0v=\sqrt{2K}(-\Delta_0V_\varepsilon)>0,\ 
 e^v|s|\leq e^{-\sqrt{2K}V_\varepsilon}<1$.
Thus, the admissibility condition holds.
The function $t\mapsto(1-t)/(1+t)$ is decreasing for $0\leq t<1$, so $ m_+(r)\geq\sqrt{2K}\min_{|z|=r} \left\{(-\Delta_0V_\varepsilon) \tanh(\sqrt{2K}V_\varepsilon/2)\right\}$.
The convergence $V_\varepsilon\to0$ as $r\to1$ is uniform, and $\tanh t/t\to1$, so near the boundary we have $m_+(r)\geq C_+F(r)(-\Delta_0F(r))$ for some constant $C_+>0$.
Applying \eqref{eq:complete-family-integral} and Proposition~\ref{prop:radial-completeness-criterion} gives completeness.
Also, a finite Blaschke product satisfies \(|s|\leq1\) on \(\D\), and its order of vanishing at each \(a_j\) is \(m_j\).

\medskip
\noindent\textbf{\textup{(ii)} $K=0$ case}\\
By \eqref{eq:complete-family-comparison}, $U>0$ and $\Delta_0U<0$, and $m_0(r)=\min_{|z|=r}V_\varepsilon(-\Delta_0V_\varepsilon) \geq\tfrac14F(r)(-\Delta_0F(r))$, so completeness follows from the same divergent integral.

\medskip
\noindent\textbf{\textup{(iii)} $K<0$ and rank three case}\\
The maximum of $F$ is $B_0^{-1/2}<\pi/\sqrt{2|K|}$, so taking $|\varepsilon|$ still smaller gives $0<V_\varepsilon<\pi/\sqrt{2|K|}$.
Thus, $U=\pi/\sqrt{2|K|}-V_\varepsilon$ satisfies $0<U<\pi/\sqrt{2|K|}$ and $\Delta_0U=-\Delta_0V_\varepsilon>0$.
Then $H_U:=\frac{\Delta_0U}{\sqrt{2|K|}\sin(\sqrt{2|K|}U)}>0$ is a smooth function, and \(U\) is an admissible potential.
By \(U>0\) and Corollary~\ref{cor:mixed rank-locus}, the model has rank three.
Since $\sqrt{2|K|}\,H_U\bigl(1+\cos(\sqrt{2|K|}U)\bigr)
=\Delta_0U\cot\!\left(\frac{\sqrt{2|K|}U}{2}\right)
\ (U\ne0)$, together with $\tan t\geq t$ for $0\leq t<\pi/2$, this gives
\[
 \begin{aligned}
 m_-(r)
 =\min_{|z|=r}\{(-\Delta_0V_\varepsilon)\tan(\sqrt{2|K|}V_\varepsilon/2)\}
 \geq\sqrt{\frac{|K|}{2}}\min_{|z|=r}V_\varepsilon(-\Delta_0V_\varepsilon)
 \geq\frac{\sqrt{2|K|}}{8}F(r)(-\Delta_0F(r)).
 \end{aligned}
\]
Thus, we obtain completeness.

\medskip
\noindent\textbf{\textup{(iv)} $K<0$ and mixed rank case}\\
For a smooth function $f$ that is even in $x$, write $ \mathscr D f:=\Delta_0f+\frac2x\partial_xf$.
Since $\partial_xf/x=\int_0^1\partial_x^2f(tx,y)\,\dd t$, this expression is smooth across $x=0$.
In particular, for radial $F$, we have
\begin{equation}
 \mathscr DF=F''+\frac3rF'
 =-\frac{2(2-r^2)L-3r^2}{(1-r^2)^2L^{5/2}}<0.
 \label{eq:mixed-radial-operator}
\end{equation}

This inequality follows from $B_0>3/2$, and $r=0$ is also interpreted as a smooth limit.
Since $h$ is even, $\mathscr Dh$ is bounded on the closed disk, and we have
\[
 \mathscr D V_\varepsilon
 =\mathscr DF(1+\varepsilon\widetilde R_h),\qquad
 \widetilde R_h
 =h+\frac{2\langle\nabla F,\nabla h\rangle+F\mathscr Dh}{\mathscr DF}.
\]

Using \eqref{eq:mixed-radial-operator}, the same boundary estimates as above show that $\widetilde R_h$ is bounded.
Furthermore, since $\sup F=B_0^{-1/2}<1$, for sufficiently small $|\varepsilon|$, we have $0<V_\varepsilon<1,\  -\mathscr D V_\varepsilon\geq\tfrac12(-\mathscr DF)>0$.
Therefore, $a_\varepsilon=1-V_\varepsilon$ satisfies $0<a_\varepsilon<1$, and we have $Q_\varepsilon=\frac{-\mathscr D V_\varepsilon}{1-V_\varepsilon}      \geq\tfrac12(-\mathscr DF)>0$ and 
\begin{equation}
 \Delta_0U_\varepsilon
 =\delta x\,\mathscr Da_\varepsilon
 =Q_\varepsilon U_\varepsilon.
 \label{eq:mixed-potential-equation}
\end{equation}

We have $|U_\varepsilon|<\delta<\pi/\sqrt{2|K|}$, and $u/\{\sqrt{2|K|}\sin(\sqrt{2|K|}u)\}$ is a smooth positive even function with value $1/(2|K|)$ at $u=0$.
Thus, \eqref{eq:mixed-complete-H} is positive and smooth, and \eqref{eq:mixed-potential-equation} is equivalent to $\Delta_0U_\varepsilon=\sqrt{2|K|}H_{U_\varepsilon}\sin(\sqrt{2|K|}U_\varepsilon)$.
This gives the admissibility condition, and \(U_\varepsilon^{-1}(0)=\{x=0\}\cap\D\).
By Corollary~\ref{cor:mixed rank-locus}, the model has rank one on \(U_\varepsilon^{-1}(0)\times\R\) and rank three on its complement.

For completeness, extend $u\cot(\sqrt{2|K|}u/2)$ smoothly
at $u=0$ with value $2/\sqrt{2|K|}$.
Let $d_\delta>0$ be the positive minimum of this function on $[-\delta,\delta]$.
Therefore at every point, we have $\sqrt{2|K|}H_{U_\varepsilon}(1+\cos(\sqrt{2|K|}U_\varepsilon)) =Q_\varepsilon\,U_\varepsilon\cot(\sqrt{2|K|}U_\varepsilon/2) \geq\tfrac{d_\delta}{2}(-\mathscr DF)$.
Therefore, $m_-(r)\geq\tfrac{d_\delta}{2}(-\mathscr DF(r))$.
On the other hand, \eqref{eq:mixed-radial-operator} gives
\[
 -\mathscr DF(r)\sim\frac{2}{(1-r^2)^2L^{3/2}},\qquad
 \int^1\sqrt{-\mathscr DF(r)}\,\dd r
 \asymp\int^\infty L^{-3/4}\,\dd L=\infty.
\]
From Proposition~\ref{prop:radial-completeness-criterion}, geodesic completeness follows.
\end{proof}

\section{Some facts on disk automorphisms}
Here, we state the known results on the disk automorphism group used in Appendix \ref{app:Palais-nonregular quotient constructions}.
Conjugation by the Cayley map \(\mathcal C(z)=\ii(1+z)/(1-z)\)
identifies \(\Aut(\D)\) with \(\Aut(\mathbb H)=\operatorname{PSL}(2,\R)\).
Here, \(\mathbb H=\{z\in\C:\operatorname{Im}z>0\}\), and
the compact-open topology corresponds to the usual Lie group topology under this identification
\cite[\S\S1.1--1.2]{FunarFuchsian}.

A subgroup $P$ of a topological group $G$ is called \textbf{discrete} if the identity element $e$ is an isolated point of $P$.
Equivalently, there exists an open neighborhood $V$ of $e$ in $G$ such that $V\cap P=\{e\}$.
Since $\Aut(\D)$ is metrizable, if a subgroup $P\subset\Aut(\D)$ is nondiscrete, then there exists a sequence of pairwise distinct elements $\psi_j\in P\setminus\{\id\}$ such that $\psi_j\to\id$.

\begin{lemma}
\label{lem:disk-automorphism-normal-form}
Let $\D=\{z\in\C:|z|<1\}$ be the unit disk and $\Aut(\D)$ its group of holomorphic automorphisms.
For every \(a\in\D\),
\begin{equation}
  \phi_a(z):=\frac{z-a}{1-\overline a z}
  \label{eq:phi-a}
\end{equation}
is a disk automorphism and satisfies \(\phi_a(a)=0\).
Also, every \(\psi\in\Aut(\D)\) can be written uniquely as
\begin{equation}
  \psi(z)=\e^{\ii\theta}\frac{z-a}{1-\overline a z},
  \qquad a\in\D,\quad \e^{\ii\theta}\in S^1.
  \label{eq:AutD-explicit}
\end{equation}
This is the classification of disk automorphisms
\cite[Propositions~3.5.2--3.5.3]{LeblCA}.
As a consequence, the stabilizer of \(p\in\D\) becomes the
rotation group \(\{z\mapsto\e^{\ii\theta}z:\e^{\ii\theta}\in S^1\}\) under conjugation by \(\phi_p\).
In particular, a disk automorphism fixing two distinct points in the disk is the identity map.
\end{lemma}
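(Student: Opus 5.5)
The lemma has four assertions: $\phi_a$ is an automorphism with $\phi_a(a)=0$; every $\psi$ has the normal form \eqref{eq:AutD-explicit}; the stabilizer of $p$ is conjugate to the rotation group; and an automorphism fixing two points is the identity. I would prove them in that order, using only the Schwarz lemma.

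\emph{$\phi_a$ is an automorphism.} For $a\in\D$ the denominator $1-\overline a z$ does not vanish on $\overline\D$. On $|z|=1$ we have
\[
|z-a|=|\overline z|\,|z-a|=|1-a\overline z|=|1-\overline a z|,
\]
so $|\phi_a|=1$ there. The maximum principle then gives $\phi_a(\D)\subset\D$. A direct computation shows $\phi_{-a}\circ\phi_a=\id$, so $\phi_a$ is bijective with holomorphic inverse. Clearly $\phi_a(a)=0$.

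\emph{Normal form.} Let $\psi\in\Aut(\D)$ and set $a:=\psi^{-1}(0)$. Then $\chi:=\psi\circ\phi_a^{-1}=\psi\circ\phi_{-a}$ is an automorphism fixing $0$. Applying the Schwarz lemma to $\chi$ and to $\chi^{-1}$ gives $|\chi'(0)|\le1$ and $|\chi'(0)|^{-1}\le1$. Hence $|\chi'(0)|=1$, and the equality case of Schwarz forces $\chi(z)=\e^{\ii\theta}z$. Therefore $\psi=\e^{\ii\theta}\phi_a$.

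\emph{Uniqueness.} From the normal form, $a$ is determined as $\psi^{-1}(0)$, and $\e^{\ii\theta}=\psi'(a)(1-|a|^2)$ is then determined as well. The paper may simply cite \cite{LeblCA} for these two steps, but the Schwarz-lemma argument is self-contained.

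\emph{Stabilizer.} Suppose $\psi(p)=p$. Then $\phi_p\circ\psi\circ\phi_p^{-1}$ fixes $0$, so by the previous step it is a rotation $z\mapsto\e^{\ii\theta}z$. Conversely, every rotation conjugated back by $\phi_p$ fixes $p$. So the stabilizer is $\phi_p^{-1}\{z\mapsto\e^{\ii\theta}z\}\phi_p$.

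\emph{Two fixed points.} If $\psi$ fixes distinct points $p$ and $p'$, then the conjugated rotation $z\mapsto\e^{\ii\theta}z$ fixes both $0$ and $\phi_p(p')\ne0$. This forces $\e^{\ii\theta}=1$, so $\psi=\id$.

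The only nontrivial ingredient is the equality case of the Schwarz lemma. Beyond that, the main care point is the bookkeeping in the conjugation direction.
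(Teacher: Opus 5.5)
Your proof is correct and follows the paper's route. The paper cites Lebl's Propositions~3.5.2--3.5.3 for the normal form, whose standard proof is your Schwarz-lemma argument on $\psi\circ\phi_{-a}$ and its inverse, and then gets the stabilizer description and the two-fixed-point rigidity by the same conjugation by $\phi_p$ that you use, so your version simply makes the cited step self-contained, including uniqueness via $a=\psi^{-1}(0)$ and $\e^{\ii\theta}=\psi'(a)(1-|a|^2)$.
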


\begin{lemma}
\label{lem:AutD-action-proper}
Let $\D=\{z\in\C:|z|<1\}$ be the unit disk and $\Aut(\D)$ its group of holomorphic automorphisms.
The orientation-preserving isometry group of the hyperbolic metric
\[
  g_{\D}=\frac{4|\dd z|^2}{(1-|z|^2)^2}
\]
is \(\Aut(\D)\) \cite[\S1.1]{FunarFuchsian}, so \(\Aut(\D)\curvearrowright\D\) is proper.
Furthermore, a subgroup \(P\subset\Aut(\D)\) is discrete if and only if
it acts properly discontinuously on \(\D\)
\cite[Proposition~1.2.1]{FunarFuchsian}.
\end{lemma}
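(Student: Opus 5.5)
The statement has two parts. The first is that $\Aut(\D)$ acts properly on $\D$. The second is that a subgroup $P$ is discrete if and only if it acts properly discontinuously. I will prove both by transporting everything to the hyperbolic metric. First I check that every $\psi\in\Aut(\D)$ is an isometry of $g_{\D}$. By Lemma~\ref{lem:disk-automorphism-normal-form}, $\psi(z)=\e^{\ii\theta}(z-a)/(1-\bar a z)$, and a direct computation gives
\[
|\psi'(z)|=\frac{1-|a|^2}{|1-\bar az|^2},\qquad 1-|\psi(z)|^2=\frac{(1-|a|^2)(1-|z|^2)}{|1-\bar a z|^2}.
\]
Hence $|\psi'|/(1-|\psi|^2)=1/(1-|z|^2)$, so $\psi^*g_{\D}=g_{\D}$. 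Conversely, an orientation-preserving isometry is conformal and orientation preserving, hence holomorphic. So the orientation-preserving isometry group is exactly $\Aut(\D)$.

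For properness, I will avoid general Myers--Steenrod theory and use the explicit parametrization. The map $(a,\e^{\ii\theta})\mapsto\psi$ identifies $\Aut(\D)$ with $\D\times S^1$ topologically; this is compatible with the compact-open topology because the formula depends continuously on the parameters, and $\psi^{-1}(0)=a$ recovers $a$ continuously. Let $C\subset\D$ be compact, contained in $\{|z|\le r\}$. If $\psi(C)\cap C\ne\varnothing$, pick $z\in C$ with $\psi(z)\in C$. Then $d_{\D}(0,\psi(0))\le d_{\D}(0,\psi(z))+d_{\D}(\psi(z),\psi(0))=d_{\D}(0,\psi(z))+d_{\D}(z,0)\le 2R$, where $R$ is the hyperbolic radius of $\{|z|\le r\}$. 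Since $\psi(0)=-\e^{\ii\theta}a$, this bounds $|a|$ away from $1$. Therefore $\{\psi:\psi(C)\cap C\ne\varnothing\}$ lies in a compact subset of $\D\times S^1$, and it is closed, so it is compact. This is properness.

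For the equivalence, suppose $P$ is discrete. Since $\Aut(\D)$ is a Lie group, a discrete subgroup is closed. Intersecting $P$ with the compact set $\{\psi:\psi(C)\cap C\ne\varnothing\}$ gives a compact discrete set, hence a finite one, so the action is properly discontinuous. Conversely, suppose $P$ is nondiscrete. Take pairwise distinct $\psi_j\to\id$ with $\psi_j\in P$, as in the paragraph before Lemma~\ref{lem:disk-automorphism-normal-form}. Then $\psi_j(0)\to0$, so infinitely many $\psi_j$ map the compact set $C=\{|z|\le1/2\}$ to sets meeting $C$. This contradicts proper discontinuity.

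The only delicate point is matching the compact-open topology with the parameter topology. The continuity of $(a,\theta)\mapsto\psi$ into the compact-open topology and the continuity of its inverse, via $a=\psi^{-1}(0)$ and $\e^{\ii\theta}=\psi'(0)/(1-|a|^2)$, are routine to check. Beyond that, the paper cites \cite{FunarFuchsian} for these standard facts.
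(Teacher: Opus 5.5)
Your argument is correct. It is more self-contained than the paper's, which gives no proof beyond the citations built into the statement. There, $\Aut(\D)$ is identified with the orientation-preserving isometry group of $g_{\D}$, so properness follows from the general fact that isometry groups of Riemannian manifolds act properly. This is the same Myers--Steenrod/Manoussos principle used in Lemma~\ref{lem:simply-connected-open-killing-proper} and Theorem~\ref{thm:orbit-trichotomy}, and the discreteness criterion is simply quoted from \cite[Proposition~1.2.1]{FunarFuchsian}. You keep the isometry idea but replace both black boxes with explicit arguments. The triangle inequality for $d_{\D}$ confines $\psi(0)=-\e^{\ii\theta}a$ to a fixed hyperbolic ball, i.e.\ $|a|\le\tanh R$. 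Hence $\{\psi:\psi(C)\cap C\neq\varnothing\}$ lies in the compact set $\{|a|\le\tanh R\}\times S^1$. The equivalence then reduces to ``closed discrete $\cap$ compact is finite'' in one direction, and to a sequence $\psi_j\to\id$ with $\psi_j(0)\to0$ in the other. The citation route is shorter and coordinate-free. Yours is elementary and proves proper discontinuity directly in the compact-set form: only finitely many $\psi\in P$ satisfy $\psi(C)\cap C\neq\varnothing$. This is exactly the form the paper later invokes, e.g.\ in the proof of Theorem~\ref{thm:SF-deck-classification}, whereas the cited criterion may be phrased with an orbit-wise definition. Two details you leave implicit are easy to supply. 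First, the set $\{\psi:\psi(C)\cap C\neq\varnothing\}$ is closed because it is the projection, along the compact factor $C$, of the closed set $\{(\psi,z)\in\Aut(\D)\times C:\psi(z)\in C\}$. Second, for continuity of the inverse parametrization it is simpler to use $\e^{\ii\theta}=\psi'(0)/(1-|\psi(0)|^2)$ and $a=-\e^{-\ii\theta}\psi(0)$. Both are continuous in the compact-open topology by the Cauchy estimates, whereas $\psi\mapsto\psi^{-1}(0)$ needs continuity of inversion.
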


\begin{corollary}
\label{cor:point-stabilizer-cyclic}
Let $\D=\{z\in\C:|z|<1\}$ be the unit disk and $\Aut(\D)$ its group of holomorphic automorphisms.
If \(P\subset\Aut(\D)\) is discrete, then for every \(p\in\D\), $P_p:=\{\psi\in P:\psi(p)=p\}$ is a finite cyclic group.
This follows from the description of the stabilizer in Lemma~\ref{lem:disk-automorphism-normal-form}
and the fact that every discrete subgroup of \(S^1\) is finite cyclic
\cite[Lemma~2.2.2(ii)]{KatokFuchsian}.
\end{corollary}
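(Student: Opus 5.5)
Let $P\subset\Aut(\D)$ be discrete and fix $p\in\D$. The plan is to show that $P_p$ is conjugate to a discrete subgroup of the rotation group $S^1$, and then apply the classification of discrete subgroups of $S^1$.

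First, $P_p$ is a subgroup of $P$, since the stabilizer of a point is closed under composition and inverses. Put $\phi_p$ as in \eqref{eq:phi-a}. By Lemma~\ref{lem:disk-automorphism-normal-form}, conjugation $c(\psi):=\phi_p\circ\psi\circ\phi_p^{-1}$ maps the stabilizer of $p$ in $\Aut(\D)$ isomorphically onto the rotation group $R:=\{z\mapsto\e^{\ii\theta}z\}$. Indeed, an automorphism fixing $0$ has $a=0$ in the normal form \eqref{eq:AutD-explicit}, so it is a rotation. Identifying $R$ with $S^1$ via $z\mapsto\e^{\ii\theta}z\ \leftrightarrow\ \e^{\ii\theta}$ is a group isomorphism. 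It is also a homeomorphism for the compact-open topology, because uniform convergence on compacta of rotations is equivalent to convergence of $\e^{\ii\theta}$; one can evaluate at $z=1/2$, for instance.

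Second, discreteness transfers. Conjugation by $\phi_p$ is a homeomorphism of the topological group $\Aut(\D)$. Hence if $V$ is a neighborhood of $\id$ with $V\cap P=\{\id\}$, then $c(V)$ is a neighborhood of $\id$ with $c(V)\cap c(P_p)=\{\id\}$. So $c(P_p)$ is a discrete subgroup of $R\cong S^1$.

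Alternatively, one can argue through proper discontinuity. By Lemma~\ref{lem:AutD-action-proper}, $P$ acts properly discontinuously, so $\{\psi\in P:\psi(K)\cap K\ne\varnothing\}$ is finite for the compact set $K=\{p\}$. This gives finiteness of $P_p$ directly, and a finite subgroup of $S^1$ is the group of $n$-th roots of unity, hence cyclic.

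Finally, every discrete subgroup of $S^1$ is finite cyclic \cite[Lemma~2.2.2(ii)]{KatokFuchsian}, so $P_p\cong c(P_p)$ is finite cyclic. The only step needing care is the compatibility of topologies, that is, that conjugation and the identification $R\cong S^1$ are homeomorphisms. This is avoided entirely by the proper-discontinuity route, which I would present as the main argument, with the conjugation only identifying $P_p$ with a finite subgroup of $S^1$.
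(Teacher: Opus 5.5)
Your proposal is correct. Your first argument is exactly the paper's route: conjugate by $\phi_p$ onto the rotation group, transfer discreteness, and apply the fact that discrete subgroups of $S^1$ are finite cyclic. You also supply the bookkeeping the paper leaves implicit, namely that inner conjugation preserves discreteness and that the subspace topology on the rotations agrees with that of $S^1$.

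The argument you propose to make primary is genuinely different in which fact does the work. Instead of the classification of discrete subgroups of $S^1$, you apply the equivalence ``discrete $\Leftrightarrow$ properly discontinuous'' of Lemma~\ref{lem:AutD-action-proper} to the compact set $\{p\}$. This gives finiteness of $P_p$ at once. You then use conjugation only algebraically, together with the elementary fact that a finite subgroup of $S^1$ of order $n$ is the group of $n$-th roots of unity.

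What this buys you is freedom from any topological check on the stabilizer. The cost is reliance on a heavier cited result (the Funar equivalence), which itself rests on properness of the $\Aut(\D)$-action. Underneath, both arguments use the same fact: the stabilizer of a point is the compact group $S^1$, and a discrete subgroup of a compact group is finite.

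Your route also matches the paper's own convention. The form of proper discontinuity you need is that only finitely many $\psi$ satisfy $\psi(K)\cap K\neq\varnothing$; it is not a freeness condition, which would wrongly force trivial stabilizers. This is the form the paper invokes in the proof of Lemma~\ref{lem:flow-type-criterion}(ii).
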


\begin{lemma}
\label{lem:disk-automorphism-dynamics}
Let $\D=\{z\in\C:|z|<1\}$ be the unit disk, and let $\Aut(\D)$ be its group of holomorphic automorphisms.
Assume that \(\psi\in\Aut(\D)\setminus\{\id\}\).
\begin{enumerate}[label=\textup{(\roman*)}]
\item If \(\psi\) has a fixed point in $\D$, then after conjugation moving this point to \(0\),
      we can write \(\psi(z)=\e^{2\pi\ii\alpha}z\).
      If \(\alpha\in\Q\), then \(\langle\psi\rangle\) is finite.
      If \(\alpha\notin\Q\), the closure of \(\langle\psi\rangle\) is the full rotation group
      \cite[\S1.2, p.~4]{BrinStuck}.
\item If \(\psi\) has no fixed point in $\D$, then after passing to \(\mathbb H\) and conjugating,
      it has one of the following standard forms \cite[\S1.2.1]{FunarFuchsian}:
      \begin{align}
        D_c(z)&:=c z,
        &&c>0,\ c\neq1
        &&\text{(hyperbolic)},
        \label{eq:hyperbolic-normal-form}\\
        N_b(z)&:=z+b,
        &&b\in\R\setminus\{0\}
        &&\text{(parabolic)}.
        \label{eq:parabolic-normal-form}
      \end{align}
      In the hyperbolic case, there are two distinct fixed points on the boundary $\partial\D$, and in the parabolic case, there is one fixed point.
      In both cases, \(\langle\psi\rangle\) is discrete, and nonzero powers have no fixed point in $\D$.
\item Let \(G=\operatorname{PSL}(2,\R)\), and write $C_G(\psi):=\{\chi\in G:\chi\psi=\psi\chi\}$ for the centralizer of \(\psi\in G\setminus\{\id\}\).
If \(\psi\) is hyperbolic, parabolic, or elliptic,
      then \(C_G(\psi)\setminus\{\id\}\) consists of all elements of the same type as \(\psi\)
      whose fixed-point set in \(\mathbb H\cup\R\cup\{\infty\}\) agrees with that of \(\psi\)
      \cite[Theorem~2.3.3, p.~35]{KatokFuchsian}.
      In particular, in the standard coordinates of \textup{(ii)},
      \[
        C_G(D_c)=\{D_a:a>0\},\qquad
        C_G(N_b)=\{N_t:t\in\R\},
      \]
      and both are one-parameter groups isomorphic to \((\R,+)\).
\end{enumerate}
\end{lemma}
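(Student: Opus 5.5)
Throughout, we work in the two standard models linked by the Cayley map $\mathcal C$: the disk $\D$ for elliptic elements and the upper half-plane $\mathbb H$ for elements without interior fixed point. Each item then reduces, after conjugation, to an explicit computation with $2\times2$ real matrices. Conjugation preserves all the properties in question (finiteness, closure of the generated subgroup, discreteness, fixed-point sets, centralizers), so no generality is lost.

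For \textup{(i)}, let $p$ be the fixed point and set $\chi:=\phi_p\circ\psi\circ\phi_p^{-1}$, which fixes $0$. By Lemma~\ref{lem:disk-automorphism-normal-form}, an automorphism fixing $0$ has $a=0$ in \eqref{eq:AutD-explicit}, so $\chi(z)=\e^{\ii\theta}z$. Write $\theta=2\pi\alpha$.
\begin{itemize}
\item If $\alpha=k/n$, then $\chi^n=\id$, so $\langle\psi\rangle$ is finite.
\item If $\alpha\notin\Q$, the multiples $n\alpha \bmod 1$ are dense in $\R/\Z$. This is the classical Kronecker/pigeonhole argument: among $N+1$ distinct points of the circle two are within $1/N$, so some power is a rotation by an angle less than $2\pi/N$, and its multiples are $2\pi/N$-dense. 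Hence the closure of $\langle\chi\rangle$ in the compact-open topology is the full rotation group $\{z\mapsto\e^{\ii t}z\}$.
\end{itemize}

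For \textup{(ii)}, identify $\psi$ with a matrix $\pm\begin{pmatrix}a&b\\c&d\end{pmatrix}\in\operatorname{PSL}(2,\R)$. Its fixed points in $\mathbb H\cup\R\cup\{\infty\}$ are the roots of $cz^2+(d-a)z-b=0$, whose discriminant is $(a+d)^2-4$.
\begin{itemize}
\item If $|a+d|<2$, there is a fixed point in $\mathbb H$, which is excluded by hypothesis, so $|a+d|\ge2$.
\item If $|a+d|>2$, the matrix has distinct real eigenvalues $\lambda^{\pm1}$ with real eigenvectors. Conjugating by the (determinant-normalized) eigenvector matrix gives $\operatorname{diag}(\lambda,\lambda^{-1})$, that is, $D_c$ with $c=\lambda^2\neq1$. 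Its fixed points are $0$ and $\infty$, two distinct boundary points.
\item If $|a+d|=2$ and $\psi\ne\id$, the matrix is $\pm$ a nontrivial unipotent. Its Jordan form over $\R$ is $\begin{pmatrix}1&b\\0&1\end{pmatrix}$, giving $N_b$ with the single fixed point $\infty$.
\end{itemize}
The powers are $D_c^n=D_{c^n}$ and $N_b^n=N_{nb}$. For $n\ne0$ these leave every compact subset of $\operatorname{PSL}(2,\R)$, so $\langle\psi\rangle$ is discrete. They are again hyperbolic or parabolic for $n\neq0$, hence have no fixed point in $\mathbb H$.

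For \textup{(iii)}, compute directly in the standard forms.
\begin{itemize}
\item \emph{Hyperbolic case.} If $\chi D_c=D_c\chi$ in $\operatorname{PSL}$, then $XD=\pm DX$ for a representative $X$, with $D=\operatorname{diag}(\lambda,\lambda^{-1})$. The $+$ sign forces $X$ to be diagonal, so $\chi=D_a$ with $a>0$. The $-$ sign forces $X$ to be antidiagonal, and then $XDX^{-1}=D^{-1}$. This would give $D_c=D_{1/c}$ in $\operatorname{PSL}$, which is impossible since $c\ne1$.
\item \emph{Parabolic case.} From $XU=\pm UX$ with $U=\begin{pmatrix}1&b\\0&1\end{pmatrix}$, comparing traces excludes the $-$ sign. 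The $+$ sign forces $X=\pm\begin{pmatrix}1&t\\0&1\end{pmatrix}$, so $\chi=N_t$.
\item \emph{Elliptic case.} Conjugate to $\D$ with fixed point $0$. A commuting $\chi$ must fix the unique interior fixed point $0$ of $\psi$, because $\psi(\chi(0))=\chi(\psi(0))=\chi(0)$. Hence $\chi$ is a rotation.
\end{itemize}
In all three cases the centralizer consists exactly of the identity together with the elements of the same type sharing the fixed-point set. The sets $\{D_a:a>0\}$ and $\{N_t:t\in\R\}$ are one-parameter groups isomorphic to $(\R,+)$ via $a=\e^t$ and $t\mapsto N_t$.

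The main obstacle is the $\pm$ sign ambiguity in $\operatorname{PSL}(2,\R)$ in item \textup{(iii)}. I handle it by the trace comparison above: anticommuting solutions can only conjugate $D_c$ to its inverse, which is excluded because $c\neq1$.
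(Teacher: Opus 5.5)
Your proposal is correct. It differs from the paper mainly in that the paper gives no argument of its own for this lemma: each item is delegated to the literature (Brin--Stuck for density of irrational rotations, Funar for the hyperbolic/parabolic normal forms, Katok's Theorem~2.3.3 for centralizers). You instead turn everything into explicit computations after conjugating by $\phi_p$ or by the Cayley map:
\begin{itemize}
\item for (ii), the trace trichotomy $|a+d|<2$, $>2$, $=2$;
\item for discreteness, the powers $D_{c^n}$ and $N_{nb}$;
\item for (iii), solving $XD=\pm DX$ and $XU=\pm UX$ in $\mathrm{SL}(2,\R)$, with the elliptic case handled by the neat observation that $\chi(0)$ must be a fixed point of $\psi$.
\end{itemize}
Your route gives a self-contained proof and treats the $\pm$ ambiguity in $\operatorname{PSL}(2,\R)$ explicitly, which the citations leave implicit. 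The paper's route buys brevity.

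A few spots to tighten.
\begin{itemize}
\item \emph{Parabolic normal form.} The real Jordan form is reached by a $\mathrm{GL}(2,\R)$ conjugation, and a conjugator of negative determinant maps $\mathbb H$ to the lower half-plane. Instead, complete the eigenvector to a basis of determinant $1$. This gives $\pm\begin{pmatrix}1&b\\0&1\end{pmatrix}$ with $b$ of either sign, which is exactly why the normal form needs $b\in\R\setminus\{0\}$ rather than $b=1$.
\item \emph{Discreteness.} Replace ``for $n\neq0$ these leave every compact subset'' by the observation that, for $n\neq0$, $c^n$ stays bounded away from $1$ and $nb$ stays bounded away from $0$.
\item \emph{Sign ambiguity in (iii).} ``Comparing traces'' should mean $\operatorname{tr}(XUX^{-1})=2\neq-2=\operatorname{tr}(-U)$. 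Cyclicity $\operatorname{tr}(XU)=\operatorname{tr}(UX)$ alone only yields $\operatorname{tr}(XU)=0$. The same comparison gives $\lambda+\lambda^{-1}=0$ in the hyperbolic anticommuting case, so the antidiagonal detour is unnecessary.
\item \emph{Converse in (iii).} State the easy converse so that your explicit centralizers match the first sentence of (iii): every hyperbolic element fixing $\{0,\infty\}$ is some $D_a$, every parabolic element fixing only $\infty$ is some $N_t$, and every elliptic element fixing $0$ in $\D$ is a rotation.
\end{itemize}
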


\section{On the Palais-nonregular case and quotient constructions}
\label{app:Palais-nonregular quotient constructions}
In this section, we classify the possible Palais-nonregular
characteristic flows on the quotient manifold.
An action of a Lie group $G$ on $M$ is called \textcolor{blue}{\textbf{locally free}} if every stabilizer $G_p$ is discrete.

\begin{screen}
\begin{theorem}[Palais-nonregular orbit trichotomy]
\label{thm:orbit-trichotomy}
Let $(M^3,\varphi,\xi,\eta,g)$ be a connected noncompact proper trans-Sasakian $\eta$-Einstein manifold with complete Palais-nonregular $\xi$ and $K=\Ric(\xi,\xi)$.
Then we have $K>0$ and the structure has rank three.
Let $\Phi_t$ be the flow of $\xi$, and put $L:=\{\Phi_t:t\in\R\}$.
The closure of $L$ is taken in $\Isom(M,\bar g)$, where $\bar g$ is the invariant
metric of Proposition~\ref{prop:averaged-invariant-metric}.
Then one of the following occurs.

\begin{enumerate}
\item[\textup{(SF)}] The image $L$ is a circle and acts locally freely but not freely.
Every characteristic orbit is periodic, and $M$ is a Seifert fiber space over a noncompact oriented $2$-orbifold.

\item[\textup{(IR$_0$)}] The closure $H=\overline L$ is a two-torus, $M/H\simeq\R$, and there is an $H$-equivariant diffeomorphism
\begin{equation}
  M\simeq H\times\R\simeq\T^2\times\R,
  \label{eq:IR0-equivariant-product}
\end{equation}
where $H$ acts by left translation on the first factor.
There are no periodic characteristic orbits. Every orbit is dense in a principal two-torus.

\item[\textup{(IR$_1$)}] The closure $H=\overline L$ is a two-torus and $M/H\simeq[0,\infty)$.
The unique singular orbit has isotropy $K_0\cong S^1$, and there is an $H$-equivariant diffeomorphism
\begin{equation}
  M\simeq H\times_{K_0}\R^2\simeq S^1\times\R^2,
  \label{eq:IR1-associated-bundle}
\end{equation}
where $K_0$ acts on $\R^2$ by the standard rotations.
There is exactly one periodic characteristic orbit and every other orbit is dense in a principal two-torus.
\end{enumerate}
\end{theorem}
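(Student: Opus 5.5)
First I will show that Palais-nonregularity forces $K>0$. If $K\leq0$, Lemma~\ref{lem:canonical-nonpositive} applies directly to $M$, since its proof uses only the Riccati equation, completeness of $\xi$ and Proposition~\ref{prop:complete-riccati-branches}, and none of these need simple connectedness. It gives a global function $w$ with $\xi w=1$ and a diffeomorphism $M\cong\R\times B$ under which the flow is a free proper translation. Each plaque of a distinguished flow box then meets every orbit at most once, so $\xi$ is regular; this is a contradiction. Hence $K>0$, and Proposition~\ref{prop:complete-riccati-branches}(i) gives $\omega\ne0$ everywhere, so the structure has rank three.

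Next I will pass to the averaged metric $\bar g$ of Proposition~\ref{prop:averaged-invariant-metric}, for which $\xi$ is a complete nowhere-vanishing Killing field. By Manoussos, $\Isom(M,\bar g)$ acts properly, so $H:=\overline L$ is a connected abelian Lie group acting properly, $H\cong\R^a\times\T^b$. If $H$ is noncompact, then $a>0$, and the argument of Lemma~\ref{lem:simply-connected-open-killing-proper} (that part does not use simple connectedness) shows $L\cong\R$ is closed and acts freely and properly. The flow is then regular, which is again a contradiction. So $H$ is a compact torus $\T^b$ with $b\geq1$.

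I will also bound $b\leq2$. The closure of $L$ preserves $\eta$ and $\xi$, and, being a limit of $\Phi_t$, it also preserves $\omega$, $\Theta$ and the structure. Principal $H$-orbits are tori of dimension at most $3$. If some orbit were open (dimension $3$), then $M$ would be a compact $3$-torus, contradicting noncompactness. So $b\in\{1,2\}$.

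For $b=1$, $L=H$ is a circle. Proper actions of compact groups have finite stabilizers on a nowhere-vanishing orbit field, so the action is locally free. It cannot be free, since then $M\to M/S^1$ would be a principal bundle and $\xi$ would be regular. By the slice theorem, $M$ is a Seifert fibration over the $2$-orbifold $M/S^1$, which is oriented via $\dd\eta$ and noncompact because $M$ is. This gives (SF).

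For $b=2$, $H=\T^2$ acts effectively on a $3$-manifold, so principal orbits are $2$-tori and $M/H$ is a connected $1$-manifold with boundary. Isotropy groups are closed subgroups of $\T^2$ that do not contain $L$, because $\xi\ne0$. Any non-principal orbit must be a circle with isotropy $K_0\cong S^1$, or possibly a finite extension of it. Finite-isotropy exceptional $\T^2$-orbits in dimension three cannot occur for the effective $\T^2$-action. I expect this to be the main obstacle: it needs the standard fact that codimension-one orbits of an effective torus action have trivial isotropy, and that a singular orbit with circle isotropy corresponds to an endpoint of $M/H$ with slice $\R^2$ rotated by $K_0$. By noncompactness, $M/H$ is $\R$ or $[0,\infty)$. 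It cannot be $[0,1]$ or $S^1$: $S^1$ gives a compact $\T^2$-bundle over $S^1$, and $[0,1]$ gives a compact lens space or $S^2\times S^1$ glued from two solid tori.

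In the case $M/H\cong\R$, the bundle is trivial and equivariantly $H\times\R$, which gives (IR$_0$). In the case $M/H\cong[0,\infty)$, the tubular neighbourhood theorem gives $M\cong H\times_{K_0}\R^2\cong S^1\times\R^2$, which gives (IR$_1$).

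Finally I will read off the orbit behaviour. On a principal orbit, $L$ is a dense line in $\T^2$, so no orbit there is periodic and each is dense in its torus. On the singular orbit $H/K_0\cong S^1$, $L$ surjects onto the circle, so that orbit is periodic, and there is exactly one such orbit.
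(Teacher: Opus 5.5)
Your proposal follows the paper's proof in all essentials. $K\le0$ is excluded by Lemma~\ref{lem:canonical-nonpositive}, and the averaged metric $\bar g$ places $H=\overline L$ inside a properly acting isometry group. Noncompact $H$ forces a closed, free, hence Palais-regular $\R$-action. Then $b=1$ gives (SF), and $b=2$ gives a cohomogeneity-one $\T^2$-action with orbit space $\R$ or $[0,\infty)$. Splitting cases by compactness of $H$ rather than closedness of $L$ is equivalent. Your exclusion of $H$-fixed points via $\xi\ne0$ (an $H$-fixed point is $L$-fixed) is simpler than the paper's $\mathfrak{so}(3)$ argument. A few points need repair.

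\textbf{False aside.} The claim that $H$ preserves $\omega$ and $\Theta$ is wrong. With $q=(\rho-\ii\sqrt{K/2})/(\rho+\ii\sqrt{K/2})$ one has $q\circ\Phi_t=\e^{-\ii\sqrt{2K}t}q$. Moreover $q\not\equiv0$, since $q\equiv0$ would give $\Theta\equiv0$, contradicting properness. So already $\Phi_{T/2}\in L$, with $T=2\pi/\sqrt{2K}$, sends $q\mapsto-q$ and changes $\rho$. Only $\bar g$, $\xi$, $\eta$ and $\dd\eta$ are $H$-invariant. You never use the claim (orienting $M/S^1$ needs only $\dd\eta$), so delete it.

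\textbf{The bound $b\le2$.} This step tacitly assumes that principal orbits have dimension $b$, i.e.\ that the principal isotropy $J$ is trivial. Supply the paper's argument: since $H$ is abelian, $J$ fixes the dense principal stratum pointwise, hence all of $M$, so $J=\{e\}$ by effectivity.

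\textbf{Your ``main obstacle''.} These facts follow in a few lines, because an isometry is determined by its value and differential at one point. For $k\in H_p$, commutativity gives $k|_{H\cdot p}=\id$.
\begin{itemize}
\item On a $2$-dimensional orbit, $\dd k_p$ is the identity on $T_p(H\cdot p)$. Since $H$ is connected, $k$ preserves orientation, so $\dd k_p$ is also the identity on the normal line. Hence $k=e$.
\item On a circle orbit, the slice representation of $K_0$ on $\nu_p\cong\R^2$ is faithful with image in $\mathrm{SO}(2)$. So the one-dimensional $K_0$ equals $\mathrm{SO}(2)$. This rules out your ``finite extension'' case and gives the rotation slice.
\end{itemize}

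\textbf{Globalizing the tube.} The tubular neighbourhood theorem only describes a neighbourhood of the singular orbit. The global identification $M\cong H\times_{K_0}\R^2$ needs an extension over the trivial principal $H$-bundle above $(0,\infty)$, or the cohomogeneity-one structure theorem the paper cites.
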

\end{screen}
\begin{proof}
Suppose first that $K\leq0$.
By Lemma~\ref{lem:canonical-nonpositive}, the characteristic flow action is free and proper.
A free proper $\R$-action defines a principal bundle, whose local trivializations give distinguished flow boxes.
Hence this is Palais-regular.
Then the Palais-nonregular case cannot occur when $K\le0$.
Thus $K>0$.
From Proposition \ref{prop:complete-riccati-branches}, the rank is three.

By Proposition~\ref{prop:averaged-invariant-metric}, the characteristic flow acts by $\bar g$-isometries.
Thus $L\subset\Isom(M,\bar g)$, a Lie group acting properly on $M$
\cite{MyersSteenrod,Manoussos2010}.
Since $\xi$ is nowhere zero, its flow has discrete stabilizers.

We first consider the case in which $L$ is closed and show that case \textup{(SF)} occurs.
\begin{enumerate}[label={\large\textbullet}]
  \item A closed subgroup of a properly acting Lie group also acts on $M$ properly. Thus the $L$-action is proper, and $L\cong\R$ or $L\cong S^1$.
  
  \item If $L\cong\R$, then $\xi$ is regular, thus this case is impossible.\\
  \hspace*{1em} Properness makes every stabilizer compact, and $\R$ has no nontrivial compact subgroup.
  Hence the action is free.
  Hence $M\to M/L$ is a principal $\R$-bundle \cite[Theorem 1.6.3]{Tammo}, so its orbit foliation is simple and therefore Palais-regular.

  \item If $L\cong S^1$ and the action is free, then $\xi$ is regular, as in the case $L\cong\R$ , thus this case is impossible.
  
  \item Therefore Palais-nonregularity forces $L\cong S^1$ to act locally freely but not freely.
  
  \item The quotient $M/L$ is an oriented noncompact $2$-orbifold, and $M\to M/L$ is a Seifert fibration.\\
  \hspace*{1em} Every orbit is periodic.
  If $L_p$ is a finite stabilizer, the slice theorem gives the local model $L\times_{L_p}D^2\longrightarrow D^2/L_p$.
  The quotient map for the compact group $L$ is proper.
  Hence $M/L$ cannot be compact because $M$ is noncompact.
\end{enumerate}

We now consider the case in which $L$ is not closed.
We first show that $H:=\overline{L}\cong\T^b$ for some $b\geq2$.
\begin{enumerate}[label={\large\textbullet}]
  \item $H\cong\R^a\times\T^b$ and acts effectively and properly on $M$.\\
  \hspace*{1em} Since $H$ is a closed subgroup of $\Isom(M,\bar g)$, it is a Lie group.
  Since $L$ is connected and abelian, so is $H$.
  The classification of connected abelian Lie groups gives $H\cong\R^a\times\T^b$.
  Its action is effective because $H\subset\Isom(M,\bar g)$, and the restriction to the closed subgroup $H$ of the proper $\Isom(M,\bar g)$-action remains proper.
  \item We have $a=0$ and $b\geq2$.\\
  \hspace*{1em} For some $x\in\R^a$ and $y\in\R^b$, $\Phi_t=(tx,[ty])$.
  If $x\neq0$, the map $t\mapsto(tx,[ty])$ is proper and therefore has closed image, contrary to the assumption that $L$ is not closed.
  Thus $x=0$, and the density of $L$ in $H$ forces $a=0$.
  Every one-parameter subgroup of a one-torus is closed, so the nonclosedness of $L$ implies $b\geq2$.
\end{enumerate}

We recall the Principal Orbit Theorem \cite[(1.8.2)]{Tammo}.
Let a compact Lie group $G$ act smoothly on a manifold $N$, and suppose that $N/G$ is connected.
For a closed subgroup $K\subset G$, set
\[
  N_{(K)}
  :=
  \{p\in N:G_p\text{ is conjugate in }G\text{ to }K\},
  \qquad
  N^K
  :=
  \{p\in N:K\subset G_p\}.
\]
Then the following holds.
\begin{enumerate}[label={\large\textbullet}]
  \item There is a unique isotropy type $(J)$ such that $N_{(J)}$ is open and dense in $N$.
  \item $N_{(J)}/G$ is connected.
  \item For every isotropy group $K$, there exists $g\in G$ such that $gJg^{-1}\subset K$.
  \item $N^J$ meets every $G$-orbit.
\end{enumerate}
The type $(J)$ is called the principal orbit type, and $N_{(J)}$ is the associated principal orbit bundle.
For $p\in N_{(J)}$, the orbit $G\cdot p$ is called a principal orbit.

We apply the theorem with $G=H$ and $N=M$.
The hypotheses hold because $H$ is a torus and $M/H$, being the continuous image of the connected space $M$, is connected.

We now show that $H\cong\T^2$ and that the $H$-action has cohomogeneity one.
\begin{enumerate}[label={\large\textbullet}]
  \item Let $J$ be the principal isotropy subgroup. Then $J=\{e\}$.\\
  \hspace*{1em} Since $H$ is abelian, every element $j\in J$ fixes the open dense principal orbit bundle $M_{(J)}$ pointwise.
  The fixed-point set of $j$ is closed, so $j$ fixes all of $M$.
  Effectivity gives $J=\{e\}$.
  \item We have $\dim H=2$.\\
  \hspace*{1em} The principal orbits have dimension $\dim H$, and hence $\dim H\leq3$.
  If $\dim H=3$, every principal orbit is both open and compact, hence also closed, and connectedness gives $M=H\cdot p$, contradicting the noncompactness of $M$.
  Since $b\geq2$, it follows that $\dim H=2$.
\end{enumerate}

We next show that every nonprincipal orbit is of the form $H\cdot p\simeq S^1$.
\begin{enumerate}[label={\large\textbullet}]
  \item If $\dim(H\cdot p)=2$, then $H\cdot p$ is a principal orbit.\\
  \hspace*{1em} For any $p\in M$, the abelianity of $H$ implies that every $k\in H_p$ fixes $H\cdot p$ pointwise.
  Hence $\dd k_p$ is the identity on $T_p(H\cdot p)$.
  Every element of the connected group $H$ preserves the orientation of $M$, so $\dd k_p$ is also the identity on the normal line because the orbit has codimension one.
  Thus $\dd k_p=\id$.
  An isometry of a connected Riemannian manifold is determined by its value and differential at one point, so $k=e$ by effectivity.
  Therefore $H_p=\{e\}$, and hence $(H_p)=(J)$.
  Thus $H\cdot p$ is a principal orbit.
  \item The case $\dim(H\cdot p)=0$ is impossible.\\
  \hspace*{1em} At a fixed point the isotropy representation gives an injective homomorphism $\T^2\to\mathrm{SO}(T_pM)\cong\mathrm{SO}(3)$.
  This is impossible because every abelian subalgebra of $\mathfrak{so}(3)$ has dimension at most one.
  \item Every nonprincipal orbit is diffeomorphic to $S^1$.\\
  \hspace*{1em} From the above discussion, every nonprincipal orbit satisfies $\dim(H\cdot p)=1$.
  At a nonprincipal orbit, both $H_p$ and $H/H_p$ are one-dimensional Lie groups.
  Since the projection $H\to H/H_p$ is continuous and $H$ is compact and connected, $H/H_p$ is also compact and connected.
  Therefore $H\cdot p\simeq H/H_p\simeq S^1$.
\end{enumerate}

Let $\pi:M\to B:=M/H$ be the projection onto the orbit space.
We now show that $B\simeq\R$ or $B\simeq[0,\infty)$.
\begin{enumerate}[label={\large\textbullet}]
  \item We use \cite[Lemma~3.1.3 and its proof]{Muzzy}: If a compact connected Lie group $G$ acts on an $n$-dimensional manifold $N$, and its principal orbits have dimension $d$ with $n-d\leq1$, then $N/G$ is a manifold, possibly with boundary.
  \item We have $B\simeq S^1,\ \R,\ [0,1]$, or $[0,\infty)$.\\
  \hspace*{1em} Here $n=3$ and $d=2$, so $B$ is a connected one-dimensional manifold with boundary, and its boundary points are precisely the images of the singular circle orbits.
  The classification of connected one-dimensional manifolds gives $B\simeq S^1,\ \R,\ [0,1]$ or $[0,\infty)$.
  \item Since $M$ is noncompact, the only possibilities are $B\simeq\R$ or $B\simeq[0,\infty)$.
\end{enumerate}

Suppose first that $B\simeq\R$. We obtain case \textup{(IR$_0$)} as follows.
\begin{enumerate}[label={\large\textbullet}]
  \item We have $M\simeq\T^2\times\R$.\\
  \hspace*{1em} Every orbit is principal, so $\pi:M\to\R$ is a principal $H$-bundle.
  Since $\R$ is contractible, this bundle is trivial, giving the $H$-equivariant diffeomorphism $M\simeq\T^2\times\R$.
  \item Every characteristic orbit is dense in a principal two-torus.\\
  \hspace*{1em} If $\ker\Phi=T\mathbb Z$ for some $T>0$, then
  $L=\Phi(\R)\cong\R/\ker\Phi=\R/T\mathbb Z\cong S^1$, contradicting the assumption that $L$ is not closed.
  Hence $\ker\Phi=\{0\}$.
  Since every orbit is principal, $H_p=\{e\}$ for all $p\in M$, and no characteristic orbit is periodic.
  Since $H\cdot p$ is compact and $\overline L=H$, we have $\overline{L\cdot p}=H\cdot p\simeq H\simeq\mathbb T^2$.
\end{enumerate}

Suppose next that $B\simeq[0,\infty)$. We obtain case \textup{(IR$_1$)} as follows.
\begin{enumerate}[label={\large\textbullet}]
  \item We have $M\simeq H\times_{K_0}\R^2,\quad K_0\cong S^1$.\\
  \hspace*{1em} There is exactly one singular orbit $S=H/K_0$, where $K_0$ is its one-dimensional isotropy subgroup.
  For $p\in S$, let $V:=\nu_pS\cong\R^2$ be the normal tangent space with respect to $S$.
  Since $H$ is abelian, $K_0$ fixes $S$ pointwise, and its derivative acts faithfully and orientation-preservingly on $V$.
  Thus $K_0\cong S^1$ and its action on $V$ is the standard rotation action.
  The structure theorem \cite[Section~6, case~2]{SchwachhoeferDiskBundles} supplies an $H$-equivariant diffeomorphism $F:H\times_{K_0}V\longrightarrow M$, where
  $H\times_{K_0}V:=(H\times V)/{\sim}$ and the equivalence relation is given by $(h,v)\sim(hk^{-1},kv)$ for $k\in K_0$.
  The identification $V\cong\R^2$ proves the required diffeomorphism.
  \item We have $M\simeq S^1\times\R^2$.\\
  \hspace*{1em} A connected circle subgroup $K_0\subset\T^2$ has a complementary circle $K_1$ with $H=K_0\times K_1$.
  Hence $H\times_{K_0}\R^2 \cong K_1\times\R^2 \cong  S^1\times\R^2$.
  \item On each principal orbit, the characteristic orbit is dense in the principal two-torus, as in case \textup{(IR$_0$)}.
  On the singular orbit, the characteristic flow induced by $L$ is periodic and fills the singular circle.
\end{enumerate}
\end{proof}

We now describe the Deck groups of the three types
\textup{(SF)}, \textup{(IR$_0$)}, and \textup{(IR$_1$)} in
Theorem~\ref{thm:orbit-trichotomy}.
Define the surjective homomorphism
\begin{align}
\pi:\StrAut(\mathcal M_K(v,s))\to G_{\mathfrak u}\label{eq:pi def}\\
\pi(\psi,L)=\psi.
\end{align}
Also, define $\tau\in \StrAut(\mathcal M_K(v,s))$ by
\begin{align}
\tau(z,w):=\left(z,w+\frac{2\pi}{\sqrt{2K}}\right).\label{eq:tau def}
\end{align}
We have $\ker\pi=\langle\tau\rangle\cong\Z$.
For any \(\psi\in G_{\mathfrak u}\), we refer to choosing a suitable $L$ and constructing an element $(\psi,L)\in \StrAut(\mathcal M_K(v,s))$ as lifting $\psi$.
All automorphisms commute with every vertical translation $(z,w)\mapsto (z,w+t)$.
When \(s=1\), we have \(\mathfrak u=v\), and we write \(G_v:=\{\psi\in\Aut(\D):v\circ\psi=v\}\) for the stabilizer in this case.

\begin{lemma}
\label{lem:flow-type-criterion}
Let $\Gamma$ be a subgroup of \(\StrAut(\mathcal M_K(v,s))\) acting freely and properly discontinuously.
Set $M_\Gamma:=\mathcal M_K(v,s)/\Gamma$, and let \(\Phi_t\) be the quotient characteristic flow on \(M_\Gamma\) induced by the characteristic flow \(\widetilde\Phi_t(z,w)=(z,w+t)\) on \(\mathcal M_K(v,s)\).
Also, let \(\pi\) and \(\tau\) be the maps given by \eqref{eq:pi def} and \eqref{eq:tau def}.
Then the following hold.
\begin{enumerate}[label=\textup{(\roman*)}]
\item $\Phi_t$ is (SF) if and only if
\(\Gamma\cap\langle\tau\rangle\neq\{1\}\) and $\Phi_t$ is Palais-nonregular.
\item If \(\Gamma\cap\langle\tau\rangle\neq\{1\}\), then \(\pi(\Gamma)\) is discrete.
\item \(\Gamma\cap\langle\tau\rangle=\{1\}\) and $\Phi_t$ is Palais-nonregular if and only if \(\pi(\Gamma)\) is nondiscrete.
\end{enumerate}
\end{lemma}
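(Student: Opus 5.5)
The plan is to compare the quotient flow $\Phi_t$ with the vertical flow $\widetilde\Phi_t(z,w)=(z,w+t)$. Every element of $\StrAut(\mathcal M_K(v,s))$ commutes with $\widetilde\Phi_t$, by Proposition~\ref{prop:generalized-rigidity}. So orbits of $\Phi_t$ are images of the lines $\{z\}\times\R$, and two such lines give the same quotient orbit exactly when some $\gamma\in\Gamma$ maps one to the other, i.e.\ $\pi(\gamma)(z)=z'$. Throughout, $T=2\pi/\sqrt{2K}$.

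\textbf{Step 1: periodicity versus $\Gamma\cap\langle\tau\rangle$.} Suppose first that $\tau^n\in\Gamma$ with $n\neq0$. Then $\Phi_{nT}=\id$ on $M_\Gamma$, so $L=\{\Phi_t\}$ is a circle and in particular closed.

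Conversely, suppose $\Phi_{T'}=\id$ for some $T'\neq0$. Then for each point $x=(z,w)$ there is $\gamma_x\in\Gamma$ with $\gamma_x(z,w)=(z,w+T')$. Since $\Gamma$ acts properly discontinuously, $\gamma_x$ is locally constant in $x$, hence constant on the connected space $\mathcal M_K(v,s)$. So $\gamma$ is a pure translation. For an element $(\id,L)$ we have $e^L=s/s=1$, so $\Im L\in2\pi\Z$, and $\gamma$ is a nontrivial power of $\tau$.

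Part (i) then follows. In case (SF) every orbit is periodic, so by the above $\Gamma\cap\langle\tau\rangle\neq\{1\}$, and (SF) is nonregular by definition. Conversely, if $\Gamma\cap\langle\tau\rangle\neq\{1\}$ and the flow is nonregular, then $L$ is closed. Theorem~\ref{thm:orbit-trichotomy} then leaves only (SF), because the (IR) cases require $L$ to be nonclosed.

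\textbf{Step 2: part (ii).} Suppose $\tau^n\in\Gamma$ and that, for contradiction, $\psi_j\to\id$ with $\psi_j\in\pi(\Gamma)$ pairwise distinct. Choose lifts $\gamma_j=(\psi_j,L_j)\in\Gamma$. Because $e^{L_j}=s/(s\circ\psi_j)\to1$ locally uniformly (Hurwitz handles the zeros of $s$), $L_j-2\pi\ii k_j\to0$ locally uniformly for suitable integers $k_j$. Composing with powers of $\tau^n$, we may normalise so that the vertical shifts $\Im L_j(0)/\sqrt{2K}$ stay in $[0,nT]$. Then infinitely many distinct $\gamma_j$ map a fixed compact neighbourhood of $(0,0)$ into a fixed compact set, which contradicts proper discontinuity.

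\textbf{Step 3: part (iii).} Assume $\Gamma\cap\langle\tau\rangle=\{1\}$, so $\pi|_\Gamma$ is injective.

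First suppose $\pi(\Gamma)$ is discrete. By Lemma~\ref{lem:AutD-action-proper} it acts properly discontinuously on $\D$, and by Corollary~\ref{cor:point-stabilizer-cyclic} each stabilizer $\pi(\Gamma)_z$ is finite. Take $\gamma$ with $\pi(\gamma)\in\pi(\Gamma)_z$. Some power $\gamma^k$ lies in $\ker\pi\cap\Gamma=\{1\}$, so $\gamma$ has finite order. On the line over $z$, $\gamma$ acts as a translation $w\mapsto w+c$, and finite order forces $c=0$. Then $\gamma$ fixes a point, so $\gamma=1$ by freeness. Hence a small disk $V\ni z$ meets its $\pi(\Gamma)$-translates only trivially, and the image of $V\times I$ is a flow box meeting each orbit in at most one plaque. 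Thus $\Phi_t$ is regular.

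Now suppose $\pi(\Gamma)$ is nondiscrete. Take pairwise distinct $\psi_j\to\id$ in $\pi(\Gamma)$. Each $\psi_j$ has at most one fixed point in $\D$ (Lemma~\ref{lem:disk-automorphism-normal-form}). So every flow box contains points $z$ with $\psi_j(z)\neq z$ and $\psi_j(z)\to z$. The plaques over $z$ and over $\psi_j(z)$ are distinct but lie on the same orbit, so $\Phi_t$ is nonregular.

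Finally, combining with (ii) gives the forward direction of (iii): nonregular together with $\Gamma\cap\langle\tau\rangle=\{1\}$ excludes the discrete case.

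\textbf{Main obstacle.} The delicate point is the normalisation of the lifts $L_j$ in Step 2, namely choosing integer branches so that the $\gamma_j$ genuinely accumulate. I would handle it as described: Hurwitz's theorem near the zeros of $s$, followed by a reduction modulo $\tau^n$.
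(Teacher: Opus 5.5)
Your proposal is correct and follows essentially the same route as the paper: you match periodicity of the quotient flow with $\Gamma\cap\langle\tau\rangle$, prove (ii) by reducing modulo a power of $\tau$ and contradicting proper discontinuity, and prove (iii) by using freeness of $\pi(\Gamma)$ on $\D$ to build flow boxes in the discrete case and the countability of the fixed points of the $\psi_j$ in the nondiscrete case. Two small remarks. First, the locally uniform normalisation of the $L_j$ in Step~2 (your ``main obstacle'') is unnecessary, since proper discontinuity is already contradicted by the single points $\gamma_j(0,0)\in S\times[0,|n|T]$ with $S=\{0\}\cup\{\psi_j(0)\}$ compact. Second, when you assemble (iii), (ii) is needed for the reverse direction (nondiscrete $\Rightarrow$ $\Gamma\cap\langle\tau\rangle=\{1\}$), whereas the forward direction is just your discrete-case paragraph.
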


\begin{proof}
Set $T:=2\pi/\sqrt{2K}$.
We prove (i).
\begin{enumerate}
  \item $\Phi_{t_0}=\id\Leftrightarrow\widetilde\Phi_{t_0}\in\Gamma\cap\langle\tau\rangle$\\
  \hspace*{2em} Proof :  Write \(p_\Gamma:\mathcal M_K(v,s)\to M_\Gamma\) for the quotient projection.
  If \(\Phi_{t_0}=\id\), then \(p_\Gamma\circ\widetilde\Phi_{t_0}=p_\Gamma\), so \(\widetilde\Phi_{t_0}\in\Gamma\).
  Also, since $\widetilde\Phi_t(z,w)=(z,w+t)$, we have \(\pi(\widetilde\Phi_{t_0})=\id\), so $\widetilde\Phi_{t_0}\in\ker\pi=\langle\tau\rangle$.
  Thus, \(\widetilde\Phi_{t_0}\in\Gamma\cap\langle\tau\rangle\).
  Conversely, if \(\widetilde\Phi_{t_0}\in\Gamma\cap\langle\tau\rangle\), then \(\Phi_{t_0}=\id\) on the quotient.

  \item Thus, $\Gamma\cap\langle\tau\rangle=\{1\}$ is equivalent to $\ker(t\mapsto\Phi_t)=\{0\}$, and $\Gamma\cap\langle\tau\rangle=\langle\tau^r\rangle,\ (r\geq1)$ is equivalent to $\ker(t\mapsto\Phi_t)=rT\Z$.

  \item If $\Phi_t$ is (SF), then \(\Gamma\cap\langle\tau\rangle\neq\{1\}\) and $\Phi_t$ is Palais-nonregular.\\
  \hspace*{2em} Proof :  In (SF), the effective flow group is a circle, so \(\ker(t\mapsto\Phi_t)\neq\{0\}\), and therefore \(\Gamma\cap\langle\tau\rangle\neq\{1\}\).
  Also, (SF) is Palais-nonregular by definition.

  \item If \(\Gamma\cap\langle\tau\rangle\neq\{1\}\) and $\Phi_t$ is Palais-nonregular, then $\Phi_t$ is (SF).\\
  \hspace*{2em} Proof :  Since $\Gamma\cap\langle\tau\rangle=\langle\tau^r\rangle,\ (r\geq1)$, the effective flow group is
  \(\R/rT\Z\). Since \(\xi\) is nowhere zero, this circle action is locally free.
  If it were free, it would be a free proper circle action and would give a principal bundle, so it would be Palais-regular, contrary to the assumption.
  Thus, this action is nonfree, and $\Phi_t$ is (SF).
\end{enumerate}

We prove \textup{(ii)}.
Assume that \(\Gamma\cap\langle\tau\rangle=\langle\tau^r\rangle\), \(r\geq1\).
Suppose that \(\pi(\Gamma)\) is nondiscrete and derive a contradiction.
Take a sequence of distinct elements \(\psi_\nu\in \pi(\Gamma)\) with \(\psi_\nu\to\id\), and choose a lift \(\gamma_\nu\) of each \(\psi_\nu\) in \(\Gamma\).
Take any \(z_*\in\D\), and write \(\gamma_\nu(z_*,0)=(\psi_\nu(z_*),a_\nu)\).
For each \(\nu\), choose an integer \(q_\nu\) such that \(0\leq a_\nu-rq_\nu T<rT\), and set \(\gamma'_\nu:=\tau^{-rq_\nu}\gamma_\nu\in\Gamma\).
Then \(\gamma'_\nu(z_*,0)=\bigl(\psi_\nu(z_*),a_\nu-rq_\nu T\bigr)\).
Since \(S:=\{z_*\}\cup\{\psi_\nu(z_*):\nu\geq1\}\) is a compact set in \(\D\), \(Q:=S\times[0,rT]\) is a compact set in \(\mathcal M_K(v,s)\).
Both \((z_*,0)\) and all \(\gamma'_\nu(z_*,0)\) belong to \(Q\), so \(\gamma'_\nu Q\cap Q\neq\varnothing\).
On the other hand, \(\gamma'_\nu\) are distinct because \(\pi(\gamma'_\nu)=\psi_\nu\).
Thus, infinitely many elements satisfy \(\gamma'_\nu Q\cap Q\neq\varnothing\), contrary to proper discontinuity.
Therefore, \(\pi(\Gamma)\) is discrete.

We prove one direction of \textup{(iii)}, namely, that if \(\Gamma\cap\langle\tau\rangle=\{1\}\) and $\Phi_t$ is Palais-nonregular, then \(\pi(\Gamma)\) is nondiscrete.
By the contrapositive, it suffices to assume that \(\Gamma\cap\langle\tau\rangle=\{1\}\) and \(\pi(\Gamma)\) is discrete and to show that $\Phi_t$ is Palais-regular.
\begin{enumerate}
  \item \(\pi(\Gamma)\) acts freely on \(\D\).\\
  \hspace*{2em} Proof :  Suppose that \(\psi\in\pi(\Gamma)\) satisfies \(\psi(z)=z\) for some \(z\in\D\).
  By Corollary~\ref{cor:point-stabilizer-cyclic}, \(\psi^k=\id\) for some \(k\geq1\).
  For a lift \(\gamma\in\Gamma\) of \(\psi\)\footnote{In this case, the lift can be shown to be unique, but uniqueness is not needed in the argument here.}, we have $    \gamma^k\in\Gamma\cap\ker\pi    =\Gamma\cap\langle\tau\rangle =\{1\}$.
  \(\gamma\) is a translation on \(\{z\}\times\R\), and since \(\gamma^k=1\), its translation displacement is zero.
  Thus, \(\gamma\) fixes this fiber.
  Since the action of \(\Gamma\) is free, \(\gamma=1\), and therefore \(\psi=\id\).

  \item $\Gamma\simeq\pi(\Gamma)$.\\
  \hspace*{2em} Proof :  \(\pi|_\Gamma:\Gamma\to\pi(\Gamma)\) is surjective by definition. Its kernel is \(\Gamma\cap\ker\pi=\Gamma\cap\langle\tau\rangle=\{1\}\), so it is also injective.

  \item Since \(\pi(\Gamma)\) is discrete and acts freely, \(q:\D\to\D/\pi(\Gamma)\) is a covering map.
  Define $\pi_\Gamma:M_\Gamma\longrightarrow\D/\pi(\Gamma)$ by $\pi_\Gamma([z,w]):=[z]$.
  Let $\bar V\subset\D/\pi(\Gamma)$ be an evenly covered neighborhood. Write $q^{-1}(\bar V)=\coprod_i V_i,\ V_i\simeq \bar{V}$, and take one of the $V_i$ to be $V$.
  Then $\pi^{-1}_\Gamma(\bar{V})\simeq V\times\R$.\\
  \hspace*{2em} Proof :  Define $f:\pi^{-1}_\Gamma(\bar{V})\to V\times\R$ as follows.
  For $x\in\pi_\Gamma^{-1}(\bar V)$, choose \((z,w)\in\D\times\R\) such that $x=p_\Gamma(z,w)$.
  Then $q(z)=\pi_\Gamma(x)\in\bar V$, so \(z\in q^{-1}(\bar V)\).
  Take \(\psi\in\pi(\Gamma)\) moving \(z\) to \(V\) and its unique lift \(\gamma\in\Gamma\). Then $\gamma(z,w)=\bigl(\psi(z),w+f_\gamma(z)\bigr)\in V\times\R$.
  The definition $f(x):=(\psi(z),w+f_\gamma(z))$ is then well-defined.
  Let \(p_\Gamma:\D\times\R\to M_\Gamma=(\D\times\R)/\Gamma\).
  If \(f(x)=f(y)\), then $x=p_\Gamma(f(x))=p_\Gamma(f(y))=y$, so \(f\) is injective.
  For any \((u,t)\in V\times\R\), set $x:=p_\Gamma(u,t)$. Then \(f(x)=(u,t)\), so it is surjective.
  
  \item $\Phi_t$ is Palais-regular.\\
  \hspace*{2em} Proof :  The above $\pi^{-1}_\Gamma(\bar{V})\simeq V\times\R$ gives a smooth local trivialization, under which $\Phi_t$ is a translation in the \(\R\)-factor.
  Restricting this to a small interval in the \(\R\)-direction gives a flow box meeting each characteristic orbit in at most one plaque.
  Thus, $\Phi_t$ is Palais-regular.
\end{enumerate}

We prove the converse direction of \textup{(iii)}, namely, that if \(\pi(\Gamma)\) is nondiscrete, then \(\Gamma\cap\langle\tau\rangle=\{1\}\) and $\Phi_t$ is Palais-nonregular.
Assume that \(\pi(\Gamma)\) is nondiscrete.
\begin{enumerate}
  \item By the contrapositive of (ii), \(\Gamma\cap\langle\tau\rangle=\{1\}\).

  \item $\Phi_t$ is Palais-nonregular.\\
  \hspace*{2em} Proof :  Take a sequence of nonidentity elements \(\psi_j\in\pi(\Gamma)\) with \(\psi_j\to\id\).
  By Lemma \ref{lem:disk-automorphism-normal-form}, each \(\psi_j\) has at most one fixed point in the disk, so the set of fixed points of the $\psi_j$ is countable.
  Since $\D$ is uncountable, we can choose \(z_*\in\D\) such that \(\psi_j(z_*)\neq z_*\) for all \(j\).
  Let \(p_\Gamma:\D\times\R\to M_\Gamma=(\D\times\R)/\Gamma\), and set $[z,w]:=p_\Gamma(z,w),\ x_*:=[z_*,0]\in M_\Gamma$.
  Let $B$ be any flow box containing \(x_*\).
  Taking a sufficiently small open neighborhood \(V\subset\D\) of \(z_*\) and an interval $I\subset\R$ gives $V\times I\subset B$.
  We can arrange that two points in $V\times I\subset B$ belong to the same plaque if and only if their $V$-coordinates agree.
  Since \(\psi_j(z_*)\to z_*\), we have $z_*,\psi_j(z_*)\in V$ for sufficiently large \(j\).
  Also, since \(\psi_j(z_*)\neq z_*\), \([z_*,0]\) and \([\psi_j(z_*),0]\) belong to distinct plaques in \(B\).
  On the other hand, write a lift of \(\psi_j\) as $\gamma_j(z,w)=\bigl(\psi_j(z),w+f_j(z)\bigr)$. Since $\gamma_j\bigl(z_*,-f_j(z_*)\bigr)=\bigl(\psi_j(z_*),0\bigr)$, we have $[\psi_j(z_*),0]=[z_*,-f_j(z_*)]=\Phi_{-f_j(z_*)}([z_*,0])$.
  Thus, these two points belong to the same global characteristic orbit. Hence, any flow box \(B\) containing \(x_*\) contains two distinct plaques belonging to one characteristic orbit.
  Therefore, \(\Phi_t\) is not Palais-regular.
\end{enumerate}
\end{proof}

\subsection{Construction of type \texorpdfstring{$(\mathrm{SF})$}{(SF)}}

\begin{lemma}
\label{lem:sfcons-presentation}
Let \((v,s)\) be an admissible pair, let \(P\subset G_{\mathfrak u}\) be a discrete subgroup, and let \(B:=\D/P\) be the quotient orbifold.
Then \(B\) is noncompact.
Furthermore, there exist finite or countable index sets \(I,J\), group elements \(a_i,c_j\in P\),
points \(p_j\in\D\), and integers \(k_j>1\) such that
\begin{equation}
 \begin{gathered}
 P=\left\langle a_i\ (i\in I),\ c_j\ (j\in J)
   \ \middle|\ c_j^{k_j}=1\ (j\in J)\right\rangle,\\
 P_{p_j}=\langle c_j\rangle,\qquad
 k_j=|P_{p_j}|>1,\qquad
 \dd c_j(p_j)=\e^{2\pi\ii/k_j}.
 \end{gathered}
 \label{eq:sfcons-p-presentation}
\end{equation}
holds, and the images of \(p_j\) give all cone points of \(B\) without repetition.
\end{lemma}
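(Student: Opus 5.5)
The plan splits the statement into two parts: noncompactness of $B=\D/P$, and the presentation \eqref{eq:sfcons-p-presentation}. By Lemma~\ref{lem:AutD-action-proper}, $P$ is a discrete subgroup of $\Aut(\D)$, so it is a Fuchsian group acting properly discontinuously on $\D$. Hence $B$ is a $2$-orbifold whose only singular points are cone points, namely the images of points with nontrivial stabilizer. By Corollary~\ref{cor:point-stabilizer-cyclic}, each such stabilizer $P_p$ is finite cyclic.

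For noncompactness I would use the invariance $\mathfrak u\circ\psi=\mathfrak u$ for all $\psi\in P$. Consider the function $b=\e^{2\mathfrak u}=\e^{2v}|s|^2$. It satisfies $0\le b<1$, is $P$-invariant, and is smooth. The proof of Theorem~\ref{prop:Standard eta-Einstein metrics}(iii) shows that $\Delta_0 b\ge 2b\,\Delta_0v\ge0$, with strict inequality on the dense set $\{s\ne0\}$. Suppose $B$ were compact. Then $b$ would descend to $B$ and attain a maximum at some point. Lifting gives a point $z_0\in\D$ where $b$ attains its global maximum on $\D$. Since $b$ is subharmonic, the strong maximum principle makes $b$ constant, contradicting $\Delta_0 b>0$ where $s\ne0$ (recall $s\not\equiv0$). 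This part should be straightforward.

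For the presentation, I would invoke the standard structure theory of Fuchsian groups with noncompact quotient. The underlying topological surface $|B|$ is a noncompact surface, oriented because $P$ preserves orientation. A noncompact orientable orbifold of this kind has a presentation with a free part together with elliptic generators $c_j$ of order $k_j$, the $c_j$ being subject only to $c_j^{k_j}=1$. Concretely, one takes a locally finite Dirichlet fundamental polygon centered at a point with trivial stabilizer and applies Poincaré's polygon theorem. Alternatively, one uses the orbifold fundamental group of $B$ via van Kampen: remove small disks around the cone points, note that the resulting open surface has free $\pi_1$, and glue the cone disks back in. Because $|B|$ is noncompact, no surface relation survives; the boundary loops around the cone points, together with the remaining generators, form a free basis of $\pi_1$ of the punctured surface. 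The relations are then exactly $c_j^{k_j}=1$, where $c_j$ is the loop around the $j$th cone point. Since cone points form a discrete subset of $B$, they are at most countable, and so are the generators.

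Finally, I would choose a lift $p_j$ of each cone point, one per orbit, so that there is no repetition. Then $P_{p_j}$ is cyclic of order $k_j>1$. Replacing $c_j$ by the generator of $P_{p_j}$ that acts by rotation $\e^{2\pi\ii/k_j}$ gives $\dd c_j(p_j)=\e^{2\pi\ii/k_j}$, using the normal form in Lemma~\ref{lem:disk-automorphism-normal-form}. This replacement is compatible with the presentation, because any generator of the cyclic group can serve as the loop generator up to reorienting or reparametrizing that basis element.

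The main obstacle is making the infinitely generated case rigorous, namely proving that the free-product decomposition holds for possibly infinite-type noncompact $|B|$. I would first try the exhaustion approach: write $|B|$ as an increasing union of compact subsurfaces with boundary, each containing finitely many cone points, on which the finite-type result holds. Choosing the bases compatibly, extending free bases at each stage, then gives the direct limit presentation.
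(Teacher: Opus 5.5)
Your proposal is correct and follows the paper's proof: noncompactness comes from the same $P$-invariant subharmonic function $\e^{2v}|s|^2$ via a maximum-principle contradiction. The presentation is the free-product structure theorem for Fuchsian groups with noncompact quotient, which the paper simply cites from Tukia, including the infinitely generated case you propose to handle by exhaustion, followed by replacing each elliptic generator by a power coprime to $k_j$.

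One point needs tightening, namely the order of choices at the end. Define $p_j$ as the fixed point of the elliptic generator $c_j$ produced by the decomposition, as the paper does, rather than as an arbitrary lift of the cone point. Replacing a finite free factor $\langle c_j\rangle$ by a conjugate $\psi\langle c_j\rangle\psi^{-1}$ can destroy the free-product presentation, and the new set of elements may even fail to generate $P$.
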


\begin{proof}
\(f:=\e^{2v}|s|^2\) is a smooth \(P\)-invariant function, and
\(0\leq f<1\) and \(f\not\equiv0\).
Writing \(v_z=(v_x-\ii v_y)/2\), we have
\begin{equation}
 \Delta_0 f
 =2f\Delta_0v+4\e^{2v}|s'+2v_zs|^2\geq0,
 \label{eq:sfcons-noncompact-base}
\end{equation}
and it is strictly positive on $\{s\ne0\}=\{f\ne0\}$.
If \(B\) were compact, \(f\) would attain a positive maximum on \(B\).
At a maximum point \(p\in B\), we have \(f(p)>0\) and \(\Delta_0f(p)\leq0\).
Thus, $0<\Delta_0f\le0$, which is a contradiction.
Therefore, \(B\) is noncompact.

A discrete group with a noncompact disk quotient is a free product of infinite cyclic groups and finite cyclic groups corresponding to the stabilizers of cone points \cite[p.~204, equation~(1), and p.~205, Remark~2]{Tukia1979SF}.
Taking \(a_i\) as the generators of the infinite cyclic factors, \(c_j\) as the generators of the finite cyclic factors, and \(p_j\) as the fixed point of \(c_j\) gives the desired presentation.
If necessary, replace the generator of each finite cyclic factor by a power relatively prime to its order so that \(\dd c_j(p_j)=\e^{2\pi\ii/k_j}\).
\end{proof}

\begin{lemma}
\label{lem:P free group rep}
Let $P=\langle a_i, c_j\  |\ c_j^{k_j}=1\rangle$ be the discrete subgroup in Lemma~\ref{lem:sfcons-presentation}.
Let $F$ be the free group generated by $a_i,c_j$.
Let $N\subset F$ be the smallest normal subgroup containing \(c_j^{k_j}\). Then
\begin{align*}
N=
\left\{
\prod_{\nu=1}^{N_0}
U_\nu c_{j_\nu}^{\varepsilon_\nu k_{j_\nu}}U_\nu^{-1}
\;\middle|\;
N_0\in\Z_{\geq0},\ 
j_\nu\in J,\ 
\varepsilon_\nu\in\{1,-1\},\
U_\nu\in F
\right\}
\end{align*}
and $P\simeq F/N$.
Here, $N_0=0$ denotes the empty product.
\end{lemma}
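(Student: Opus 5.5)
The statement has two parts. First, the normal closure of $\{c_j^{k_j}\}$ in $F$ is exactly the set of finite products of conjugates of the relators and their inverses. Second, $P\simeq F/N$. I will prove them in that order.

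For the first part, let $N'$ denote the right-hand side set. I will show $N=N'$ by double inclusion. Each element $U c_j^{\pm k_j}U^{-1}$ lies in every normal subgroup containing $c_j^{k_j}$. So $N'\subset N$, because $N$ is closed under products.

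For the reverse inclusion, I will check that $N'$ is itself a normal subgroup containing every $c_j^{k_j}$; minimality of $N$ then gives $N\subset N'$. This check has four pieces.
\begin{enumerate}
\item $N'$ contains the identity, as the empty product with $N_0=0$.
\item $N'$ is closed under products, since concatenating two products gives another product of the same form.
\item $N'$ is closed under inverses. The inverse of $\prod_{\nu=1}^{N_0} U_\nu c_{j_\nu}^{\varepsilon_\nu k_{j_\nu}}U_\nu^{-1}$ is $\prod_{\nu=N_0}^{1} U_\nu c_{j_\nu}^{-\varepsilon_\nu k_{j_\nu}}U_\nu^{-1}$. This is again of the required form, because $\varepsilon_\nu\in\{\pm1\}$ is allowed.
\item $N'$ is closed under conjugation. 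For $W\in F$, conjugating the product by $W$ conjugates each factor, and each factor becomes $(WU_\nu)c_{j_\nu}^{\varepsilon_\nu k_{j_\nu}}(WU_\nu)^{-1}$.
\end{enumerate}
Taking $N_0=1$, $U_1=1$ and $\varepsilon_1=1$ shows $c_j^{k_j}\in N'$. Hence $N=N'$.

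For the second part, I will take the presentation $P=\langle a_i,c_j\mid c_j^{k_j}=1\rangle$ from Lemma~\ref{lem:sfcons-presentation}. By definition this presentation means that the homomorphism $F\to P$ sending generators to the same-named elements is surjective, with kernel the normal closure of the relators. That kernel is $N$. The first isomorphism theorem then gives $P\simeq F/N$.

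The only point needing care is the meaning of the presentation in Lemma~\ref{lem:sfcons-presentation}. It rests on the free product decomposition $P\cong(\ast_i\Z)\ast(\ast_j\Z/k_j)$ quoted from Tukia, and I would make this explicit. The universal property of the free product gives a map $F\to P$, and the kernel of $F=\ast F_{\{x\}}\to\ast\,(F_{\{x\}}/\langle\langle r_x\rangle\rangle)$ is the normal closure of the relators $r_x$. This standard fact identifies the kernel with $N$. I do not expect a real obstacle, since everything beyond this step is formal group theory.
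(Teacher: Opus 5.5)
Your proof is correct. You identify the normal closure with the set of finite products of conjugates of $c_j^{\pm k_j}$ by double inclusion, then read off $P\simeq F/N$ from the meaning of the presentation in Lemma~\ref{lem:sfcons-presentation}. That is exactly the standard fact the paper relies on; the paper states this lemma without proof, so your argument supplies the routine verification it omits.
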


In what follows, we choose \(p_j\) and \(c_j\) together so that they are consistent with \eqref{eq:sfcons-p-presentation}.
Since \(k_j\) and \(\operatorname{ord}_{p_j}(s)\) depend only on the orbit,
the realization conditions below do not depend on this choice.

\begin{lemma}
\label{lem:sfcons-normalized-lift-powers}
Let \(P\subset G_{\mathfrak u}\) be a discrete subgroup, let
\(I,J,a_i,c_j,p_j,k_j\) be the data in
Lemma~\ref{lem:sfcons-presentation}, and set \(d_j:=\operatorname{ord}_{p_j}(s)\).
Then there exists \(L_j\in\mathcal O(\D)\) satisfying $e^{L_j}=\frac{s}{s\circ c_j},\ L_j(p_j)=-\frac{2\pi\ii d_j}{k_j}$.
Furthermore, for \(L_j\) satisfying this condition,
setting \(\widetilde c_j:=F_{c_j,L_j}\) gives
\begin{equation}
 \widetilde c_j^{\,k_j}=\tau^{-d_j}
 \label{eq:sfcons-lift-powers}
\end{equation}
Here, \(\tau\) is the map defined in \eqref{eq:tau def}.
\end{lemma}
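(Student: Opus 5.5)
We split the argument into the existence of $L_j$ and the computation of $\widetilde c_j^{\,k_j}$.

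\textbf{Existence of $L_j$.} Since $c_j\in P\subset G_{\mathfrak u}$, the relation $\mathfrak u\circ c_j=\mathfrak u$ together with Lemma~\ref{lem:singular-potential-gauge} (applied to the pairs $(v,s)$ and $(v\circ c_j,s\circ c_j)$) shows that $h:=s/(s\circ c_j)$ extends to an element of $\mathcal O^*(\D)$. Because $\D$ is simply connected, $h$ has a holomorphic logarithm $L$, unique up to adding $2\pi\ii n$. Adjusting $n$, we get $L_j$ with $L_j(p_j)=-2\pi\ii d_j/k_j$ once we know that $h(p_j)=\e^{-2\pi\ii d_j/k_j}$.

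To compute $h(p_j)$, write $s(z)=(z-p_j)^{d_j}a(z)$ with $a(p_j)\ne0$. Since $c_j(p_j)=p_j$ and $c_j'(p_j)=\e^{2\pi\ii/k_j}$, we have
\[
s(c_j(z))=(c_j(z)-p_j)^{d_j}a(c_j(z)),\qquad \frac{c_j(z)-p_j}{z-p_j}\to \e^{2\pi\ii/k_j}\quad (z\to p_j).
\]
Hence $h(p_j)=\e^{-2\pi\ii d_j/k_j}\,a(p_j)/a(p_j)=\e^{-2\pi\ii d_j/k_j}$, as required.

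\textbf{Computing $\widetilde c_j^{\,k_j}$.} By the composition law $(\psi,L)(\phi,N)=(\psi\circ\phi,\,N+L\circ\phi)$, induction gives
\[
\widetilde c_j^{\,k}=\Bigl(c_j^{k},\ \textstyle\sum_{m=0}^{k-1}L_j\circ c_j^{m}\Bigr).
\]
For $k=k_j$ we have $c_j^{k_j}=\id$, so $\widetilde c_j^{\,k_j}=(\id,\Sigma)$ with $\Sigma:=\sum_{m=0}^{k_j-1}L_j\circ c_j^m\in\mathcal O(\D)$.

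Multiplying the identities $\e^{L_j\circ c_j^m}=(s\circ c_j^m)/(s\circ c_j^{m+1})$ over $m$ telescopes to $\e^{\Sigma}=s/(s\circ c_j^{k_j})=1$. Thus $\Sigma$ is a continuous function on the connected set $\D$ with values in $2\pi\ii\Z$, hence constant. Evaluating at $p_j$, which is fixed by every power of $c_j$, gives
\[
\Sigma=k_jL_j(p_j)=-2\pi\ii d_j.
\]
Therefore $\widetilde c_j^{\,k_j}(z,w)=\bigl(z,\ w+\Im\Sigma/\sqrt{2K}\bigr)=\bigl(z,\ w-2\pi d_j/\sqrt{2K}\bigr)=\tau^{-d_j}(z,w)$.

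The only delicate point is the evaluation $h(p_j)=\e^{-2\pi\ii d_j/k_j}$. It relies on the normalization $\dd c_j(p_j)=\e^{2\pi\ii/k_j}$ fixed in Lemma~\ref{lem:sfcons-presentation} and on $p_j$ being a zero of order exactly $d_j$. When $d_j=0$ it reduces to $h(p_j)=1$, so $L_j(p_j)=0$. The remaining steps are the group-law bookkeeping.
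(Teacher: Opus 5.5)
Your proof is correct and follows the paper's approach. The existence of $L_j$, via Lemma~\ref{lem:singular-potential-gauge} and the evaluation $\bigl(s/(s\circ c_j)\bigr)(p_j)=\dd c_j(p_j)^{-d_j}$, is identical; for the power you compute the second component $\sum_{m}L_j\circ c_j^m$ explicitly by telescoping, whereas the paper uses $\ker\pi=\langle\tau\rangle$ to write $\widetilde c_j^{\,k_j}=\tau^m$ and reads off $m=-d_j$ from the translation on $\{p_j\}\times\R$, and both rest on the same evaluation at the fixed point $p_j$.
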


\begin{proof}
Set \(T:=2\pi/\sqrt{2K}\).
Since \(c_j\in G_{\mathfrak u}\), Lemma~\ref{lem:singular-potential-gauge} gives
\(s/(s\circ c_j)\in\mathcal O^*(\D)\). Since \(\D\) is simply connected, its holomorphic logarithm exists.
Writing \(s(z)=(z-p_j)^{d_j}a_j(z)\), \(a_j(p_j)\neq0\), we have
\[
 \left.\frac{s}{s\circ c_j}\right|_{p_j}
 =\lim_{z\to p_j}\left(\frac{z-p_j}{c_j(z)-p_j}\right)^{d_j}
   \frac{a_j(z)}{a_j(c_j(z))}
 =\dd c_j(p_j)^{-d_j}
 =\e^{-2\pi\ii d_j/k_j}.
\]
Therefore, there exists $L_j$ such that \(L_j(p_j)=-2\pi\ii d_j/k_j\).

Set \(\widetilde c_j:=F_{c_j,L_j}\).
Let $\pi$ be the map given in \eqref{eq:pi def}.
Since \(\pi(\widetilde c_j^{\,k_j})=c_j^{k_j}=\id\),
there exists \(m\in\Z\) such that \(\widetilde c_j^{\,k_j}=\tau^m\).
The action of \(\widetilde c_j\) is
\[
\begin{aligned}
\widetilde c_j(p_j,w)
=F_{c_j,L_j}(p_j,w)
=\left(p_j,w+\frac{1}{\sqrt{2K}}\Im\left(-\frac{2\pi\ii d_j}{k_j}\right)\right)
=\left(p_j,w-\frac{d_jT}{k_j}\right).
\end{aligned}
\]
Thus, on \(\{p_j\}\times\R\), the action of \(\widetilde c_j^{k_j}\) is \(w\mapsto w-d_jT\), so \(m=-d_j\).
\end{proof}

\begin{screen}
\begin{theorem}
\label{thm:SF-deck-classification}
Let $\mathcal M_K(v,s)$ be a model space, and let \(\Gamma\subset\StrAut(\mathcal M_K(v,s))\) be a subgroup acting freely and properly discontinuously.
Then the following two conditions are equivalent.
\begin{enumerate}
\item The characteristic flow of \(M_\Gamma=\mathcal M_K(v,s)/\Gamma\) is of type \textup{(SF)}.
\item Let $P=\left\langle a_i\ (i\in I),\ c_j\ (j\in J)\ \middle|\ c_j^{k_j}=1\ (j\in J\ne\emptyset)\right\rangle\subset G_{\mathfrak u}$ be a subgroup given in Lemma \ref{lem:sfcons-presentation}, and let \(d_j:=\operatorname{ord}_{p_j}(s)\).
Assume that there exists a positive integer \(r\) such that, for each \(j\in J\), there exists an integer \(b_j\) satisfying \(\gcd(b_j,k_j)=1\) and \(rb_j+d_j\equiv0\pmod{k_j}\).
Let $A_i$ be a lift of $a_i$, and let $C_j:=\tau^{n_j}\tilde{c}_j$ be a lift of $c_j$, where $\tilde{c}_j$ is a lift of $c_j$ defined in Lemma~\ref{lem:sfcons-normalized-lift-powers} and $n_j:=\frac{rb_j+d_j}{k_j}$.
Then $\Gamma$ is given by $\Gamma:=\left\langle\tau^r,\ A_i\ (i\in I),\ C_j\ (j\in J)\right\rangle$.
\end{enumerate}
\end{theorem}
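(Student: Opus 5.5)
The plan is to prove the two implications separately. Both directions rest on Lemma~\ref{lem:flow-type-criterion}(i)--(ii) and on the power formula \eqref{eq:sfcons-lift-powers} of Lemma~\ref{lem:sfcons-normalized-lift-powers}.

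\textbf{(1)$\Rightarrow$(2).} Assume the flow is (SF).
\begin{enumerate}
\item By Lemma~\ref{lem:flow-type-criterion}(i) we have $\Gamma\cap\langle\tau\rangle=\langle\tau^r\rangle$ for some $r\geq1$.
\item By Lemma~\ref{lem:flow-type-criterion}(ii), $P:=\pi(\Gamma)\subset G_{\mathfrak u}$ is discrete. Lemma~\ref{lem:sfcons-presentation} then gives the presentation of $P$ with generators $a_i,c_j$ and data $p_j,k_j$.
\item We need $J\neq\emptyset$. If $J=\emptyset$, then $P$ acts freely on $\D$. Repeating the argument in the proof of Lemma~\ref{lem:flow-type-criterion}(iii), the circle action on $M_\Gamma$ would then be free, so $\xi$ would be Palais-regular, contradicting (SF).
\item For each $j$, pick $\gamma_j\in\Gamma$ with $\pi(\gamma_j)=c_j$. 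Since $\ker\pi=\langle\tau\rangle$, we can write $\gamma_j=\tau^{m_j}\tilde c_j$. Because $\tau$ is central, \eqref{eq:sfcons-lift-powers} gives $\gamma_j^{k_j}=\tau^{m_jk_j-d_j}$. This element lies in $\Gamma\cap\langle\tau\rangle=\langle\tau^r\rangle$, so $m_jk_j-d_j=rb_j$ for some integer $b_j$, i.e.\ $rb_j+d_j\equiv0\pmod{k_j}$ and $n_j=m_j$.
\item Next we show $\gcd(b_j,k_j)=1$, which I expect to be the main subtle step. The element $\gamma_j$ preserves the fiber $\{p_j\}\times\R$ and acts on it by translation by $-b_jrT/k_j$ modulo $rT$. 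If $e:=\gcd(b_j,k_j)>1$, then $\gamma_j^{k_j/e}$ translates the fiber by a multiple of $rT$. So it equals $\tau^{rl}$ on that fiber, and $\tau^{-rl}\gamma_j^{k_j/e}\in\Gamma$ fixes a point.
\item Freeness forces $\tau^{-rl}\gamma_j^{k_j/e}=1$. Applying $\pi$ gives $c_j^{k_j/e}=\id$, which contradicts the fact that $c_j$ has order $k_j$.
\item Finally, $\Gamma$ is generated by $\tau^r$ together with arbitrary lifts $A_i\in\Gamma$ of the $a_i$ and the lifts $C_j=\gamma_j$. Indeed, any $\gamma\in\Gamma$ agrees, modulo $\Gamma\cap\ker\pi=\langle\tau^r\rangle$, with a word in these lifts mapping to $\pi(\gamma)$.
\end{enumerate}

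\textbf{(2)$\Rightarrow$(1).} Let $\Gamma$ be defined as in (2) and assume it acts freely and properly discontinuously, as in the hypothesis.
\begin{enumerate}
\item By Lemma~\ref{lem:flow-type-criterion}(i), it suffices to show that $\Gamma\cap\langle\tau\rangle\ne\{1\}$ and that the flow is Palais-nonregular.
\item The first claim is immediate since $\tau^r\in\Gamma$.
\item For nonregularity, fix $j\in J$ (possible since $J\neq\emptyset$). Then $C_j^{k_j}=\tau^{n_jk_j-d_j}=\tau^{rb_j}$, and $C_j$ preserves $\{p_j\}\times\R$, translating it by $-rb_jT/k_j$.
\item Modulo $\langle\tau^r\rangle$, this translation has order exactly $k_j$, because $\gcd(b_j,k_j)=1$. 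Hence the image of $\{p_j\}\times\R$ in $M_\Gamma$ is a periodic orbit of period $rT/k_j$, while orbits through nearby generic points have period $rT$.
\item Generic points exist nearby because $P$ is discrete, so the fixed points of nonidentity elements of $P$ are isolated.
\item The flow therefore acts locally freely but not freely. A flow box around the exceptional orbit meets a nearby generic orbit in $k_j>1$ plaques, so $\xi$ is Palais-nonregular. It is thus of type (SF).
\end{enumerate}

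One point needs care: freeness of $\Gamma$ should be part of the hypothesis, or be derived from the gcd conditions. If it has to be derived, the route is that every torsion-type element of $\Gamma$ is conjugate to some $C_j^{m}\tau^{rl}$, which acts on the fiber over $p_j$ by a nonzero translation unless it is trivial, again using $\gcd(b_j,k_j)=1$.
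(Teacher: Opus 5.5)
Your (1)$\Rightarrow$(2) direction follows the paper step by step. Your $J\neq\varnothing$ step is the contrapositive of the paper's: a stabilizer element of $[z,w]$ projects into $P_z=\{\id\}$, hence lies in $\langle\tau^r\rangle$, so the circle action would be free. There is one harmless slip. $C_j$ translates $\{p_j\}\times\R$ by $+rb_jT/k_j=(n_j-d_j/k_j)T$, not by $-rb_jT/k_j$, as $C_j^{k_j}=\tau^{rb_j}$ shows.

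The gap is in (2)$\Rightarrow$(1). You claim that the orbit over $p_j$ has period $rT/k_j$ and that nearby generic orbits have period $rT$, but you never prove $\Gamma\cap\langle\tau\rangle=\langle\tau^r\rangle$. The generators only give $\tau^r\in\Gamma$, so $\Gamma\cap\langle\tau\rangle=\langle\tau^{r'}\rangle$ for some $r'\mid r$, and the generic period is $r'T$. Your computation ``order exactly $k_j$ modulo $\langle\tau^r\rangle$'' is therefore done modulo a subgroup you have not shown to be the relevant one. The exceptional period also needs every element of $\Gamma$ preserving $\{p_j\}\times\R$ to have the form $\tau^{r\ell}C_j^a$.

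Establishing $\Gamma\cap\ker\pi=\langle\tau^r\rangle$ is the first and main step of the paper's converse. An element of $\Gamma\cap\ker\pi$ is $(\tau^r)^mW(A_i,C_j)$ with $W$ in the normal closure $N$ of the $c_j^{k_j}$ (Lemma~\ref{lem:P free group rep}). Each factor $U_\nu C_{j_\nu}^{\pm k_{j_\nu}}U_\nu^{-1}$ evaluates to $(\tau^r)^{\pm b_{j_\nu}}$, because $C_j^{k_j}=(\tau^r)^{b_j}$ and $\tau^r$ is central. This is exactly where the free-product structure of $P$ from Lemma~\ref{lem:sfcons-presentation} (that is, noncompactness of $\D/P$) enters. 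If $P$ had additional relations, their lifts could produce vertical translations $\tau^m$ with $r\nmid m$. The paper uses the same fact to derive freeness and proper discontinuity from the data in (2), so your closing sketch for deriving freeness has the same hole.

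Because freeness is a standing hypothesis of the theorem, you can also avoid identifying $r'$. Let $\delta$ be the translation of $C_j$ on $\{p_j\}\times\R$. If $\delta=r'mT$ with $m\in\Z$, then $\tau^{-r'm}C_j\in\Gamma$ fixes a point, hence is trivial, forcing $c_j=\pi(C_j)=\id$, a contradiction. So $[\delta]\neq0$ in $\R/r'T\Z$ fixes $[p_j,w]$, and the locally free circle action is not free, which is (SF). Your plaque count then goes through with the actual integer ratio $e>1$ of the generic period $r'T$ to the exceptional one, rather than with $k_j$. This route uses freeness in place of the $\gcd$ condition, which the paper needs because it derives freeness from the data. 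Either way, the specific periods you state require the missing step.
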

\end{screen}

\begin{proof}
Set \(T:=2\pi/\sqrt{2K}\).
Let $\pi$ and $\tau$ be the maps given in \eqref{eq:pi def} and \eqref{eq:tau def}, respectively.
We show that condition 1 implies condition 2.
\begin{enumerate}
\item By condition 1, we have $ \Gamma\cap\langle\tau\rangle=\langle\tau^r\rangle$ for some \(r\in\Z_{>0}\), and \(\pi(\Gamma)\) is discrete.\\
\hspace*{2em} Proof : By Lemma~\ref{lem:flow-type-criterion}\textup{(i)}, \(\Gamma\cap\langle\tau\rangle\neq\{1\}\), so we can write $ \Gamma\cap\langle\tau\rangle=\langle\tau^r\rangle$ for some \(r\in\Z_{>0}\).
Furthermore, \(\pi(\Gamma)\) is discrete by Lemma~\ref{lem:flow-type-criterion}\textup{(ii)}.

\item \(P:=\pi(\Gamma)\) has a nonidentity torsion element, that is, \(J\neq\varnothing\) in the presentation of Lemma~\ref{lem:sfcons-presentation}.\\
\hspace*{2em} Proof :  Let $\Phi_t$ be the characteristic flow of $\mathcal{M}_K(v,s)/\Gamma$.
Since $\Gamma\cap\langle\tau\rangle=\langle\tau^r\rangle$ gives \(\ker(t\mapsto\Phi_t)=rT\Z\), the effective acting group of the characteristic flow is \(\R/(rT\Z)\).
If this action were free, it would not be of type (SF), so this action is not free.
For some \([z,w]\in \mathcal{M}_K(v,s)/\Gamma\) and some nonzero \([t]\in\R/(rT\Z)\), we have
\([z,w+t]=[z,w]\).
Therefore, there exists \(\gamma\in\Gamma\) such that $\gamma(z,w)=(z,w+t)$, and \(\psi:=\pi(\gamma)\) fixes \(z\).
If \(\psi=\id\), then \(\gamma\in\Gamma\cap\ker\pi=\langle\tau^r\rangle\), so \([t]=0\), which is a contradiction.
Thus, \(\psi\neq\id\).
Since \(\pi(\Gamma)\) is discrete, the point stabilizer \(P_z\) is finite, and \(\psi\) is a nonidentity torsion element.
By Lemma~\ref{lem:sfcons-presentation}, we can choose a set of \(a_i,c_j,p_j,k_j\) appearing in condition 2, and \(J\neq\varnothing\) in this presentation.

\item Let \(\widetilde c_j\) be a lift of $c_j$ defined by Lemma~\ref{lem:sfcons-normalized-lift-powers}.
An arbitrary lift \(C_j\in\Gamma\) of $c_j$ is given by $C_j=\tau^{n_j}\widetilde c_j$, where \(n_j\) is a suitable integer.\\
\hspace*{2em} Proof :  Since \(C_j\widetilde c_j^{-1}\in\ker\pi=\langle\tau\rangle\), we can write \(C_j=\tau^{n_j}\widetilde c_j\) for some \(n_j\in\Z\).

\item We have \(rb_j+d_j\equiv0\pmod{k_j}\).\\
\hspace*{2em} Proof :  Since \(C_j\in\Gamma\) and \(c_j^{k_j}=\id\), we have \(C_j^{k_j}\in\Gamma\cap\ker\pi=\langle\tau^r\rangle\), so we can write \(C_j^{k_j}=(\tau^r)^{b_j}\) for some \(b_j\in\Z\).
Since \(\tau\) is central, $(\tau^r)^{b_j}=C_j^{k_j}=(\tau^{n_j}\widetilde c_j)^{k_j}=\tau^{k_jn_j}\widetilde c_j^{\,k_j}=\tau^{k_jn_j-d_j}$.
Here, the last equality uses \eqref{eq:sfcons-lift-powers}.
Since \(\tau\) has infinite order, we obtain \(k_jn_j-d_j=rb_j\).

\item We have \(\gcd(b_j,k_j)=1\).\\
\hspace*{2em} Proof : Assume that \(g_j:=\gcd(b_j,k_j)>1\).
The translation amount of \(C_j\) on \(\{p_j\}\times\R\) is $\left(n_j-\frac{d_j}{k_j}\right)T=\frac{rb_j}{k_j}T$.
Since $D_j:=C_j^{k_j/g_j}(\tau^r)^{-b_j/g_j}\in\Gamma$ satisfies $D_j(p_j,w)=\left(p_j,w+\frac{k_j}{g_j}\frac{rb_j}{k_j}T-\frac{b_j}{g_j}rT\right)=(p_j,w)$, it fixes \(\{p_j\}\times\R\) pointwise.
On the other hand, \(\pi(D_j)=c_j^{k_j/g_j}\neq\id\), so \(D_j\neq1\), contradicting the freeness of $\Gamma$.

\item For each $i\in I$, choose a lift $A_i\in\Gamma$ of $a_i$. Then \(\tau^r,A_i,C_j\) generate \(\Gamma\).\\
\hspace*{2em} Proof : Set \(\Gamma_0:=\langle\tau^r,A_i\ (i\in I),C_j\ (j\in J)\rangle\).
For every \(\gamma\in\Gamma\), choosing a word \(W(a_i,c_j)\) in \(a_i,c_j\) representing \(\pi(\gamma)\) gives $\gamma W(A_i,C_j)^{-1}\in\Gamma\cap\ker\pi =\langle\tau^r\rangle\subset\Gamma_0$.
Therefore, \(\gamma\in\Gamma_0\), and we obtain \(\Gamma=\Gamma_0\).
\end{enumerate}

We show that condition 2 implies condition 1.
\begin{enumerate}
\item We have $\pi(\Gamma)=P,\  \Gamma\cap\langle\tau\rangle=\langle\tau^r\rangle$.\\
\hspace*{2em} Proof :  We have $\pi(\Gamma)=\langle\pi(\tau^r),\pi(A_i),\pi(C_j)\rangle=\langle a_i,c_j\rangle=P$.
Also, \eqref{eq:sfcons-lift-powers} and \(k_jn_j-d_j=rb_j\) give \(C_j^{k_j}=\tau^{n_jk_j}\tilde{c}_j^{k_j}=\tau^{n_jk_j-d_j}=(\tau^r)^{b_j}\).
Let \(F,N\) be the free group and normal subgroup in Lemma~\ref{lem:P free group rep}.
For a word \(W\in F\), \(W(a_i,c_j)\) denotes the element evaluated in \(P\), and \(W(A_i,C_j)\) denotes the element evaluated in \(\Gamma\) after replacing each letter by the corresponding lift.
Take \(\gamma\in\Gamma\cap\ker\pi\). Since \(\tau^r\) is central, we can write \(\gamma=(\tau^r)^mW(A_i,C_j)\) for some \(m\in\Z\) and \(W\in F\).
Since \(\pi(\gamma)=\id\) gives \(W(a_i,c_j)=1\), we have \(W\in\ker(F\to P)=N\).
Therefore, for some \(N_0\in\Z_{\geq0}\), \(U_\nu\in F\), \(j_\nu\in J\), and \(\varepsilon_\nu\in\{1,-1\}\), we can write in \(F\)
\[
 W=\prod_{\nu=1}^{N_0}U_\nu c_{j_\nu}^{\varepsilon_\nu k_{j_\nu}}U_\nu^{-1}
\]
\(N_0=0\) denotes the empty product.
Evaluating this equality after replacing the letters by lifts and using \(C_j^{k_j}=(\tau^r)^{b_j}\) and the centrality of \(\tau^r\) gives
\begin{align*}
 W(A_i,C_j)
 =\prod_{\nu=1}^{N_0}U_\nu(A_i,C_j)C_{j_\nu}^{\varepsilon_\nu k_{j_\nu}}U_\nu(A_i,C_j)^{-1}
 =\prod_{\nu=1}^{N_0}(\tau^r)^{\varepsilon_\nu b_{j_\nu}}\in\langle\tau^r\rangle.
\end{align*}
Therefore, \(\gamma\in\langle\tau^r\rangle\), and we obtain \(\Gamma\cap\ker\pi\subset\langle\tau^r\rangle\).
The reverse inclusion is clear from the generating presentation in condition 2.

\item \(\Gamma\) acts properly discontinuously on \(\D\times\R\).\\
\hspace*{2em} Proof :  Take an arbitrary nonempty compact set \(C\subset\D\) and an arbitrary nonempty compact interval \(\mathcal I\subset\R\), and set \(Q:=C\times\mathcal I\).
Since \(Q=C\times\mathcal I\) and \(P\) is discrete, Lemma~\ref{lem:AutD-action-proper} shows that there are only finitely many projections \(\pi(\gamma)\in P\) of \(\gamma\in\Gamma\) satisfying \(\gamma Q\cap Q\neq\varnothing\).
Fix any one of them, \(\psi\), choose a lift \(\gamma_\psi\in\Gamma\) of \(\psi\), and write \(\gamma_\psi(z,w)=(\psi(z),w+f_\psi(z))\), \(f_\psi\in C^\infty(\D,\R)\).
For any lift \(\gamma\in\Gamma\) of \(\psi\) satisfying \(\gamma Q\cap Q\neq\varnothing\), we have \(\gamma\gamma_\psi^{-1}\in\Gamma\cap\ker\pi=\langle\tau^r\rangle\), so we can write \(\gamma=\tau^{r\ell}\gamma_\psi,\ \ell\in\Z\).
For some \((z,w)\in Q\), we have \(\gamma(z,w)\in Q\).
The \(w\)-coordinates of the initial and final points belong to \(\mathcal I\), and \(f_\psi\) is bounded on \(C\), so \(r\ell T\) is also bounded.
Since \(rT>0\), there are only finitely many possible integers \(\ell\) for each \(\psi\).
There are also only finitely many such \(\psi\), so \(\{\gamma\in\Gamma:\gamma Q\cap Q\neq\varnothing\}\) is finite.
Every compact set is contained in \(Q\) of this form, so proper discontinuity follows.

\item The action of \(\Gamma\) on \(\D\times\R\) is free.\\
\hspace*{2em} Proof :  The equality \(\tilde c_j(p_j,w)=(p_j,w-d_jT/k_j)\) obtained in the proof of Lemma~\ref{lem:sfcons-normalized-lift-powers} and \(n_j=(rb_j+d_j)/k_j\) in condition 2 give $C_j(p_j,w)=\left(p_j,w+\left(n_j-\frac{d_j}{k_j}\right)T\right)=\left(p_j,w+\frac{rb_jT}{k_j}\right)$.
Take \(\gamma\in\Gamma\) such that \(\gamma(z,w)=(z,w)\), and set \(\psi:=\pi(\gamma)\). Then \(\psi(z)=z\).
If \(\psi=\id\), then \(\Gamma\cap\ker\pi=\langle\tau^r\rangle\) gives \(\gamma=\tau^{r\ell}\), \(\ell\in\Z\), and \(r\ell T=0\) gives \(\gamma=1\).
If \(\psi\neq\id\) is impossible, then the identity is the only element with a fixed point, and the action is free.
In what follows, assume that \(\psi\neq\id\) and derive a contradiction.
Since \(P_z\) is a nontrivial finite cyclic group (Corollary~\ref{cor:point-stabilizer-cyclic}), the image of \(z\) in \(B\) is a cone point.
By Lemma~\ref{lem:sfcons-presentation}, \(z\) belongs to the same \(P\)-orbit as some \(p_j\).
Therefore, \(z=\psi_0(p_j)\) for some \(j\in J\) and \(\psi_0\in P\).
Since \(P_{p_j}=\langle c_j\rangle\), we can write \(\psi_0^{-1}\psi\psi_0=c_j^a\), \(1\leq a<k_j\).
By \(\pi(\Gamma)=P\), take a lift \(\gamma_{\psi_0}\in\Gamma\) of \(\psi_0\), and write \(\gamma_0:=\gamma_{\psi_0}^{-1}\gamma\gamma_{\psi_0}\), \(\gamma_{\psi_0}^{-1}(z,w)=(p_j,w_0)\).
\(\gamma_0\) fixes \((p_j,w_0)\), and \(\pi(\gamma_0)=c_j^a=\pi(C_j^a)\).
Thus, \(\gamma_0C_j^{-a}\in\Gamma\cap\ker\pi=\langle\tau^r\rangle\), so we can write \(\gamma_0=\tau^{r\ell}C_j^a\) for some \(\ell\in\Z\).
The translation amount of \(C_j\) obtained above gives $\gamma_0(p_j,w_0)=\left(p_j,w_0+rT\left(\frac{ab_j}{k_j}+\ell\right)\right)=(p_j,w_0)$.
Therefore, \(ab_j+k_j\ell=0\), and \(\gcd(b_j,k_j)=1\) gives \(k_j\mid a\), which contradicts \(1\leq a<k_j\).

\item The characteristic flow of \(M_\Gamma\) is of type \textup{(SF)}.\\
\hspace*{2em} Proof :  Let \(\Phi_t\) be the characteristic flow of \(M_\Gamma\).
The first item and the proof of Lemma~\ref{lem:flow-type-criterion} give \(\ker(t\mapsto\Phi_t)=rT\Z\), so its effective acting group is \(\R/(rT\Z)\).
Since \(J\neq\varnothing\), take one \(j\in J\).
By \(C_j(p_j,w)=(p_j,w+rb_jT/k_j)\) obtained in the preceding item, \([rb_jT/k_j]\in\R/(rT\Z)\) fixes \([p_j,w]\in M_\Gamma\).
Since \(k_j>1\) and \(\gcd(b_j,k_j)=1\), we have \(rb_jT/k_j\notin rT\Z\), and this circle action is not free.
Also, since \(\xi\) is nowhere zero, this circle action is locally free.
Thus, the characteristic flow is of type \textup{(SF)}.
\end{enumerate}
\end{proof}

Exceptional fibers correspond to the nontrivial point stabilizers of the projection group \(P\).
The relation with the order of vanishing of \(s\) is given by the congruence condition \(rb_j+d_j\equiv0\pmod{k_j}\).
Exceptional fibers can exist even when $s$ has no zeros.

\subsection{Constructions of types \texorpdfstring{$(\mathrm{IR}_0)$, $(\mathrm{IR}_1)$}{IR0, IR1}}

\begin{lemma}
\label{cor:holomorphic-gauge}
Let $(v,s)$ be an admissible pair.
For $h\in\mathcal O^*(\D)$, $(v',s')=(v-\log |h|,hs)$ is also an admissible pair.
Also, $F_h(z,w)=\left(z,w-\frac{1}{\sqrt{2K}}\Im \log h(z)\right)$ gives a structure-preserving isomorphism \(\mathcal M_K(v,s)\to\mathcal M_K(v',s')\).
In particular, if \(s\) is nowhere zero, we can obtain \(s'=1\) by taking \(h=1/s\).
Also, for \(\chi\in\Aut(\D)\), $(v^\chi,s^\chi):=(v\circ\chi^{-1},s\circ\chi^{-1})$ is an admissible pair, and \(F_\chi(z,w)=(\chi(z),w)\) gives a structure-preserving isomorphism \(\mathcal M_K(v,s)\to\mathcal M_K(v^\chi,s^\chi)\).
\end{lemma}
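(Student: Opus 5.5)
The plan is to reduce everything to Proposition~\ref{prop:generalized-rigidity}, applied first with $\psi=\id$ and then with $\psi=\chi$, after checking that the new data are admissible.

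\textbf{Gauge by $h$.} Since $\D$ is simply connected and $h\in\mathcal O^*(\D)$, I will fix a holomorphic logarithm $\log h$ once and for all.

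Admissibility of $(v',s')$ comes from three facts:
\begin{itemize}
\item $\log|h|=\Re\log h$ is harmonic, so $\Delta_0v'=\Delta_0v>0$;
\item $e^{v'}|s'|=e^{v}|h|^{-1}|h||s|=e^v|s|<1$;
\item $s'=hs\not\equiv0$ because $h$ has no zeros.
\end{itemize}

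Moreover $\mathfrak u_{v',s'}=v-\log|h|+\log|h|+\log|s|=\mathfrak u_{v,s}$, which is the singular-potential condition of Proposition~\ref{prop:generalized-rigidity} with $\psi=\id$. For the cocycle condition I need $L$ with $e^L=s/(S\circ\psi)=s/(hs)=1/h$. Taking $L=-\log h$ gives $F_{\id,L}(z,w)=(z,w-\tfrac{1}{\sqrt{2K}}\Im\log h)$, which is exactly $F_h$.

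Proposition~\ref{prop:generalized-rigidity} is phrased as describing all isomorphisms. However, its last part (``reversing the above calculations'' together with the explicit metric check) shows that every map of this form is structure-preserving. I would make this explicit here:
\begin{itemize}
\item $F_h^*\eta_{v',s'}=\dd w-\tfrac{1}{\sqrt{2K}}\dd\Im\log h-\tfrac{1}{\sqrt{2K}}*_0\dd(v-\Re\log h)$, and this equals $\eta_{v,s}$ by the Cauchy--Riemann relation $\dd\Im\log h=*_0\dd\Re\log h$;
\item $q'\circ F_h=e^{v-\log|h|-\ii\sqrt{2K}w+\ii\Im\log h}\,\overline{hs}=q$, since $\bar h\,e^{-\log|h|+\ii\Im\log h}=1$, so $A$ is preserved;
\item $\varphi$ is preserved because the base map is the identity.
\end{itemize}

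The special case $s$ nowhere zero then gives $h=1/s\in\mathcal O^*(\D)$ and $s'=1$.

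\textbf{Automorphism $\chi$.} Admissibility of $(v\circ\chi^{-1},s\circ\chi^{-1})$ follows from three facts:
\begin{itemize}
\item $\Delta_0(v\circ\chi^{-1})=|(\chi^{-1})'|^2(\Delta_0v)\circ\chi^{-1}>0$, by conformal invariance of the sign of the Laplacian under holomorphic maps;
\item the bound $e^v|s|<1$ is preserved under composition;
\item $s\circ\chi^{-1}$ is holomorphic and not identically zero.
\end{itemize}

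Applying Proposition~\ref{prop:generalized-rigidity} with $\psi=\chi$: we have $\mathfrak u_{v^\chi,s^\chi}\circ\chi=\mathfrak u_{v,s}$, and $s/(s^\chi\circ\chi)=1$, so $L=0$ and $F_{\chi,0}=F_\chi$. Directly:
\begin{itemize}
\item $F_\chi^*\eta$ is preserved since pullback by holomorphic $\chi$ commutes with $*_0$;
\item $F_\chi^*(A'|\dd z|^2)$ picks up $|\chi'|^2$, which cancels the factor in $\Delta_0 v^\chi\circ\chi$, while $q$ is simply transported;
\item $\varphi$ is preserved by holomorphy of $\chi$.
\end{itemize}

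\textbf{Main obstacle.} The only real subtlety is that Proposition~\ref{prop:generalized-rigidity} is stated as a necessary form, so sufficiency must be confirmed. I expect no difficulty beyond the routine checks listed above, mirroring the metric-preservation computation at the end of that proof.
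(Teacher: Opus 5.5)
Your proposal is correct and matches the paper's proof: you check admissibility via harmonicity of $\log|h|$ and the conformal covariance $\Delta_0(v\circ\chi^{-1})=|(\chi^{-1})'|^2(\Delta_0 v)\circ\chi^{-1}$, then apply Proposition~\ref{prop:generalized-rigidity} with $(\psi,L)=(\id,-\log h)$ and $(\psi,L)=(\chi,0)$. Your direct checks that $\eta$, $q$ and $A$ are preserved spell out the sufficiency direction, which the paper takes from the closing metric computation in the proof of that proposition.
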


\begin{proof}
Since \(\log |h|=\Re \log h\) is harmonic, \(\Delta_0v'=\Delta_0(v-\log|h|)=\Delta_0v\).
Also, since $e^{v'}|s'|=e^v/|h|\cdot |h||s|=\e^v|s|$, $(v',s')$ is an admissible pair.
Applying Proposition~\ref{prop:generalized-rigidity} with
\(\psi=\id\), \((V,S)=(v-\log|h|,hs)\), and \(L=-\log h\) gives the formula for $F_h$.
The rest follows from Lemma~\ref{lem:singular-potential-gauge}.

For \(\chi\in\Aut(\D)\), we have \(\Delta_0v^\chi=|(\chi^{-1})'|^2(\Delta_0v)\circ\chi^{-1}>0\) and \(\e^{v^\chi}|s^\chi|=(\e^v|s|)\circ\chi^{-1}<1\), so $(v^\chi,s^\chi)$ is also admissible.
Taking \(\psi=\chi\) in Proposition~\ref{prop:generalized-rigidity}, we have $s/s^\chi\circ\chi=1$, so choosing the branch of $\log$ to be \(L=0\) shows that \(F_\chi(z,w)=(\chi(z),w)\) is a structure-preserving isomorphism \(\mathcal M_K(v,s)\to\mathcal M_K(v^\chi,s^\chi)\).
\end{proof}

Below, ``after a suitable holomorphic gauge'' means replacing the model using these structure-preserving isomorphisms \(F_h\) or $F_\chi$ and also conjugating the acting group as \(\Gamma\mapsto F\Gamma F^{-1}\).

\begin{lemma}
\label{lem:single-zero}
Let \(\mathcal M_K(v,s)\) be a model space and let \(\Gamma\subset\StrAut(\mathcal M_K(v,s))\) be a free discrete subgroup.
Assume that \(s^{-1}(0)\neq\varnothing\) and that \(\pi(\Gamma)\) is nondiscrete.
Then \(s\) has exactly one zero, and after a suitable holomorphic gauge, we can write $s(z)=z^d,\ v=v(|z|),\ d\geq1$.
Furthermore, every element of \(\pi(\Gamma)\) is then a rotation fixing \(0\).
\end{lemma}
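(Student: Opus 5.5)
The plan is to combine the invariance of the zero set of $s$ under $G_{\mathfrak u}$ with the nondiscreteness of $P:=\pi(\Gamma)$. Since $\mathfrak u=v+\log|s|$ equals $-\infty$ exactly on $Z:=s^{-1}(0)$, every $\psi\in G_{\mathfrak u}$ maps $Z$ onto itself. Lemma~\ref{lem:singular-potential-gauge}, applied to $s/(s\circ\psi)\in\mathcal O^*(\D)$, shows that $\psi$ also preserves the vanishing orders. The set $Z$ is a nonempty discrete closed subset of $\D$.

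The first step is to show that $P$ fixes some point of $Z$. Fix $p\in Z$. The orbit $P\cdot p$ lies in the discrete set $Z$. Take a sequence of distinct $\psi_\nu\in P$ with $\psi_\nu\to\id$. Then $\psi_\nu(p)\to p$ inside the discrete set $Z$, so $\psi_\nu(p)=p$ for large $\nu$. Hence the stabilizer $P_p$ is infinite. Because $\Aut(\D)$ acts properly (Lemma~\ref{lem:AutD-action-proper}), $P_p$ is relatively compact, so it is a nondiscrete subgroup of the rotation group about $p$ (Lemma~\ref{lem:disk-automorphism-normal-form}). Its closure is therefore the full circle of rotations about $p$. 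Since $\mathfrak u$ is continuous with values in $\R\cup\{-\infty\}$, the closure of $P_p$ still preserves $\mathfrak u$, so $\mathfrak u$ is invariant under all rotations about $p$. If $Z$ contained a second point $p'\neq p$, the whole circle through $p'$ centered at $p$ (hyperbolically) would lie in $Z$, contradicting discreteness. Thus $Z=\{p\}$.

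Next I normalize the data. Conjugating by $\chi=\phi_p$ through Lemma~\ref{cor:holomorphic-gauge} moves the zero to $0$ and turns $P$ into a group of rotations fixing $0$, because $p$ is then the unique zero and every element of $G_{\mathfrak u}$ preserves $Z$. This gives the final assertion. Now $s=z^d h$ with $h\in\mathcal O^*(\D)$ and $d\ge1$. The gauge $F_{1/h}$ replaces $(v,s)$ by $(v+\log|h|,z^d)$ and leaves $\mathfrak u$ and $G_{\mathfrak u}$ unchanged. After this, $\mathfrak u=v+d\log|z|$ is invariant under the closure of $P$, which is the full rotation group about $0$, so $v=\mathfrak u-d\log|z|$ is radial.

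The main obstacle I expect is the passage from invariance under the nondiscrete group $P_p$ to invariance under its closure, and hence to full rotational symmetry. This requires the continuity of $\mathfrak u$ as an extended-real function and the closure statement from Lemma~\ref{lem:disk-automorphism-dynamics}(i): a nondiscrete subgroup of $S^1$ is dense. A second point to check is that $P$ consists of elliptic elements at $p$ only. That follows because every $\psi\in P$ fixes $p$, being the unique point of $Z$, so the hypotheses on freeness of $\Gamma$ are not even needed beyond the nondiscreteness assumption.
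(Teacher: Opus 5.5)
Your proof is correct. The normalization step matches the paper's: you move the zero to $0$ with $F_{\phi_p}$, remove the nonvanishing factor of $s$ by a gauge that leaves $\mathfrak u$ unchanged, and read off radiality of $v=\mathfrak u-d\log|z|$ from density of $\pi(\Gamma)$ in the rotation group.

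The difference is in how you prove the zero is unique.

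\begin{itemize}
\item The paper takes two zeros $p_1\neq p_2$. By discreteness of $s^{-1}(0)$, $\psi_j(p_i)=p_i$ for large $j$, and then $\psi_j=\id$ because a disk automorphism fixing two interior points is the identity.
\item You first show that the stabilizer $P_p$ is dense in the rotations about $p$. You then pass to the closure using continuity of $\mathfrak u$ into $[-\infty,\infty)$ (closedness of $s^{-1}(0)$ would also do). Finally, a second zero would sweep out a whole circle inside the discrete zero set.
\end{itemize}

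The paper's route is shorter and needs no closure argument at that stage. Yours establishes the rotational symmetry of $\mathfrak u$ about a zero up front, so one fact yields both uniqueness and radiality.

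A small simplification: you do not need properness to get relative compactness of $P_p$. The tail of your sequence $\psi_\nu$ already lies in $P_p$ and accumulates at $\id$, and that is exactly nondiscreteness.
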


\begin{proof}
Since \(\pi(\Gamma)\) preserves \(\mathfrak u\), it preserves \(s^{-1}(0)\).
Take a sequence of nonidentity elements \(\psi_j\in\pi(\Gamma)\) such that \(\psi_j\to\id\).
If there were two distinct points \(p_1,p_2\in s^{-1}(0)\), then for each \(i=1,2\), we would have $\psi_j(p_i)\in s^{-1}(0)$ and $\psi_j(p_i)\longrightarrow p_i$.
Since \(s\not\equiv0\), its zeros are isolated, so for sufficiently large \(j\), we have
\(\psi_j(p_1)=p_1\) and \(\psi_j(p_2)=p_2\).
Lemma~\ref{lem:disk-automorphism-normal-form} gives \(\psi_j=\id\), a contradiction.
Thus, $s^{-1}(0)=\{p\}$, and every element of \(\pi(\Gamma)\) fixes \(p\).

Moving \(p\) to \(0\) makes \(\pi(\Gamma)\) a subgroup of the rotation group.
Its closure is the whole of \(S^1\).
Indeed, nondiscreteness gives rotations with arbitrarily small nonzero angles \(\delta>0\), and their integer multiples \(0,\delta,\ldots,\lfloor2\pi/\delta\rfloor\delta\) form a \(\delta\)-net of the circle.
Thus, \(\pi(\Gamma)\) is dense.

Since \(\mathfrak u\) is continuous on the punctured disk, it is a radial function.
Writing \(s=z^d a\), with \(a\in\mathcal O^*(\D)\), and using the gauge \(a^{-1}\), we obtain
\(s'=z^d\) and \(v'=\mathfrak u-d\log|z|\).
The function \(v'\) is a smooth function obtained by the gauge, and since it is radial on the punctured disk, it is radial on \(\D\).
\end{proof}

\begin{lemma}
\label{lem:rotational-deck}
Let \(\mathcal M_K(v,s)\) be a model space such that \(s=z^d\), \(v=v(|z|)\), and \(d\geq0\), and let \(\Gamma\subset\StrAut(\mathcal M_K(v,s))\) be a free discrete subgroup.
If \(\pi(\Gamma)\) is nondiscrete in the rotation group, then for some \(\alpha\notin\Q\) and \(n\in\Z\), we can write
\begin{align}
\Gamma=\langle\widehat R_{\alpha,n}\rangle\cong\Z,\ 
 \widehat R_{\alpha,n}(z,w)
  =\bigl(\e^{2\pi\ii\alpha}z,w+(n-d\alpha)T\bigr),\label{eq:rotational-generator}\\
  T=\frac{2\pi}{\sqrt{2K}},\ 
  n-d\alpha\neq0.\nonumber
\end{align}
Then \(M_\Gamma\simeq S^1\times\R^2\), and \(z=0\) is the unique closed characteristic orbit.
The other orbits are dense in the two-tori \(|z|=\mathrm{const}\).
\end{lemma}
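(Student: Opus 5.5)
We work in the gauge where \(s=z^d\) and \(v=v(|z|)\). The plan is to compute the lifts of rotations explicitly, then use freeness and discreteness of \(\Gamma\) to cut \(\Gamma\) down to a single cyclic generator, and finally read off the topology.

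\textbf{Step 1: lifts of rotations.} Let \(R_\theta(z)=\e^{\ii\theta}z\). Since \(v\) and \(|s|\) are radial, \(\mathfrak u\circ R_\theta=\mathfrak u\). We have \(s/(s\circ R_\theta)=\e^{-\ii d\theta}\), which is constant. So by Proposition~\ref{prop:generalized-rigidity} the lifts are \(F_{R_\theta,L}\) with \(L\) a constant satisfying \(\e^{L}=\e^{-\ii d\theta}\). This forces \(L=-\ii d\theta+2\pi\ii n\). Writing \(\theta=2\pi\alpha\), the lift acts by \((z,w)\mapsto(\e^{2\pi\ii\alpha}z,\,w+(n-d\alpha)T)\), which is \(\widehat R_{\alpha,n}\). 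Every element of \(\Gamma\) therefore has this form. By hypothesis \(\pi(\Gamma)\subset S^1\).

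\textbf{Step 2: \(\Gamma\) is cyclic with irrational generator.} Consider the homomorphism \(\Gamma\to\R\) that sends \(\widehat R_{\alpha,n}\) to its \(w\)-translation \((n-d\alpha)T\); this is well defined because \(L\) is constant. The map is additive and, by Step 1, injective: if the translation vanishes, the element fixes \((0,w)\), and freeness makes it trivial. Discreteness of \(\Gamma\) should make the image discrete. The justification is that an element with small translation moves the compact set \(\{|z|\le r\}\times[0,1]\) back into itself, and proper discontinuity bounds the number of such elements. A discrete subgroup of \(\R\) is \(0\) or infinite cyclic, so \(\Gamma=\langle\widehat R_{\alpha,n}\rangle\) with \(n-d\alpha\ne0\). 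Since \(\pi(\Gamma)=\langle\e^{2\pi\ii\alpha}\rangle\) is nondiscrete, \(\alpha\notin\Q\) by Lemma~\ref{lem:disk-automorphism-dynamics}(i).

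\textbf{Step 3: topology and orbits.} The map \((z,w)\mapsto\bigl(z\,\e^{-2\pi\ii\alpha w/((n-d\alpha)T)},\,[w]\bigr)\) is invariant under \(\Gamma\). It identifies \(M_\Gamma\) with \(\D\times\R/(n-d\alpha)T\Z\), which is \(S^1\times\R^2\). In these coordinates the characteristic flow moves \([w]\) uniformly while rotating \(z\) by the irrational angle \(2\pi\alpha\) per period. Hence \(z=0\) is a closed orbit. Each other orbit lies in the torus \(\{|z|=c\}\times S^1\) and is dense there, by irrationality of \(\alpha\), and is not periodic.

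The main obstacle is the discreteness claim in Step 2. The delicate point is that \(\pi(\Gamma)\) is dense in \(S^1\), so discreteness of the translation image cannot be read off from \(\pi\). It must come from proper discontinuity near the axis \(z=0\), and this is where I would check the argument carefully.
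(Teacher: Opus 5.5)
Your proposal is correct and follows essentially the same route as the paper. Both arguments compute the lifts of rotations via Proposition~\ref{prop:generalized-rigidity} and use the translation homomorphism along the axis $\{0\}\times\R$, made injective by freeness and discrete by proper discontinuity; your compact-set argument is exactly what is needed, so the hedging is unnecessary. Both then conclude with the same explicit diffeomorphism $M_\Gamma\cong\D\times S^1$. The only thing to add is that $\Gamma$ is nontrivial because $\pi(\Gamma)$ is nondiscrete, which rules out the zero subgroup of $\R$.
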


\begin{proof}
We prove \eqref{eq:rotational-generator}.
\begin{enumerate}
  \item For \(\beta\in\R\), the lifts of the rotation \(R_\beta(z):=\e^{2\pi\ii\beta}z\) are $\widehat R_{\beta,n}(z,w)=\bigl(\e^{2\pi\ii\beta}z,w+(n-d\beta)T\bigr), \ n\in\Z$.\\
  \hspace*{2em} Proof :  Since \(\mathfrak u=v+d\log|z|\) is radial, \(R_\beta\in G_{\mathfrak u}\).
  Also, $ \frac{s}{s\circ R_\beta} =\frac{z^d}{(\e^{2\pi\ii\beta}z)^d} =\e^{-2\pi\ii d\beta}$ extends to a nowhere-zero holomorphic function on $\D$.
  Its holomorphic logarithms are \(2\pi\ii(n-d\beta)\), for \(n\in\Z\).
  Proposition~\ref{prop:generalized-rigidity} gives the above formula.

  \item We have \(\Gamma\cong\Z\).\\
  \hspace*{2em} Proof :  Since $\pi(\Gamma)$ is a rotation group in $\D$, each \(\gamma\in\Gamma\) preserves \(\{0\}\times\R\), so we define a homomorphism \(\delta:\Gamma\to\R\) by \(\gamma(0,w)=(0,w+\delta(\gamma))\).
  If \(\delta(\gamma)=0\), then \(\gamma\) fixes \((0,0)\), so freeness of the \(\Gamma\)-action gives \(\gamma=1\).
  Thus, \(\delta\) is injective.
  Also, for a compact interval \(I\subset\R\), if \(\delta(\gamma)\in I\), then \(\gamma(0,0)\in\{0\}\times I\).
  Properness of the \(\Gamma\)-action implies that there are only finitely many such \(\gamma\), so \(\delta(\Gamma)\) is a discrete subgroup of \(\R\).
  Since \(\pi(\Gamma)\) is nondiscrete, \(\Gamma\) is infinite, and \(\delta(\Gamma)\) is a nontrivial discrete subgroup of \(\R\).
  Hence \(\delta(\Gamma)\cong\Z\), and injectivity of \(\delta\) gives \(\Gamma\cong\Z\).

  \item A generator of \(\Gamma\) is
  \(\widehat R_{\alpha,n}\) for some \(\alpha\notin\Q\) and \(n\in\Z\), and \(n-d\alpha\neq0\).\\
  \hspace*{2em} Proof :  Since $\Gamma\simeq\Z$, taking a generator \(\gamma\) of \(\Gamma\), we can write \(\gamma=\widehat R_{\alpha,n}\).
  If \(\alpha\in\Q\), then \(\langle R_\alpha\rangle\) is finite, and if \(\alpha\notin\Q\), it is
  dense by Lemma~\ref{lem:disk-automorphism-dynamics}(i).
  Thus, nondiscreteness of \(\pi(\Gamma)=\langle R_\alpha\rangle\) gives \(\alpha\notin\Q\).
  Also, since \(\delta\) is injective and \(\gamma\neq1\),
  \(\delta(\gamma)=(n-d\alpha)T\neq0\), so \(n-d\alpha\neq0\).
\end{enumerate}
We prove the remaining assertions.
Setting \(t:=w/((n-d\alpha)T)\), we have \(\gamma(z,t)=(\e^{2\pi\ii\alpha}z,t+1)\).
Define the map \(f:(\D\times\R)/\Gamma\to\D\times S^1\) by
  \begin{equation}
    f([z,t]):=\bigl(\e^{-2\pi\ii\alpha t}z,\e^{2\pi\ii t}\bigr).
    \label{eq:IR1-diffeomorphism}
  \end{equation}
For a given $(\zeta,u)\in\D\times S^1$, choose any one \(t\in\R\) such that \(u=\e^{2\pi\ii t}\), and define the inverse map by $ f^{-1}(\zeta,u)=[\e^{2\pi\ii\alpha t}\zeta,t]$.
This definition is independent of the choice of $t$.
Indeed, if $t'=t+k,\ k\in\Z$, then $\left[\e^{2\pi\ii\alpha t'}\zeta,t'\right]=\left[\e^{2\pi\ii\alpha(t+k)}\zeta,t+k\right]=\left[\gamma^k\bigl(\e^{2\pi\ii\alpha t}\zeta,t\bigr)\right]=\left[\e^{2\pi\ii\alpha t}\zeta,t\right]$, so it is well-defined.
Thus, \(M_\Gamma\simeq\D\times S^1\simeq S^1\times\R^2\).
Under \eqref{eq:IR1-diffeomorphism}, the orbit \(z=0\) becomes \(\{0\}\times S^1\), so it is closed.
Also, an orbit through \(z\neq0\) is dense in the two-torus defined by $|z|=\textrm{const}$, so there are no closed orbits.
\end{proof}

\begin{screen}
\begin{theorem}
\label{thm:IR0-deck-classification}
Let \(\mathcal M_K(v,s)\) be a model space and let \(\Gamma\subset\StrAut(\mathcal M_K(v,s))\) be a free discrete subgroup, and let $M_\Gamma:=\mathcal M_K(v,s)/\Gamma$.
The characteristic flow of \(M_\Gamma\) is of type IR\(_0\) if and only if, after a suitable holomorphic gauge, \(s=1\), and there exist a hyperbolic or parabolic one-parameter subgroup \(A=\{A_t:t\in\R\}\subset G_v\), \(\alpha\in\R\setminus\Q\), and \(m\in\Z_{>0}\) such that we can write
\begin{align}
&\Gamma=\langle\gamma_1,\gamma_2\rangle\cong\Z^2,\\
&\gamma_1(z,w)=(A_1(z),w),\qquad
\gamma_2(z,w)=(A_\alpha(z),w+mT),\label{eq:IR0-deck-normal-form}\\
&T=\frac{2\pi}{\sqrt{2K}}.
\end{align}
\end{theorem}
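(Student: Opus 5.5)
We prove the two directions separately. The forward direction rests on Lemma~\ref{lem:flow-type-criterion}, Lemma~\ref{lem:single-zero} and Lemma~\ref{lem:rotational-deck}; the converse is a direct check.

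\textbf{Forward direction: reduction to $s=1$ and $H=\overline{\pi(\Gamma)}$.}
Assume the flow on $M_\Gamma$ is of type IR$_0$.

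First, Theorem~\ref{thm:orbit-trichotomy} shows there are no periodic characteristic orbits. Hence $\ker(t\mapsto\Phi_t)=\{0\}$, so $\Gamma\cap\langle\tau\rangle=\{1\}$. Lemma~\ref{lem:flow-type-criterion}(iii) then makes $\pi(\Gamma)$ nondiscrete, and $\pi|_\Gamma$ is injective.

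Next we show $s$ has no zeros. If $s$ had a zero, Lemma~\ref{lem:single-zero} and Lemma~\ref{lem:rotational-deck} would give $M_\Gamma\simeq S^1\times\R^2$ with a closed orbit, i.e.\ type IR$_1$. So $s$ is nowhere zero, and the gauge $h=1/s$ of Lemma~\ref{cor:holomorphic-gauge} gives $s=1$ and $G_{\mathfrak u}=G_v$. With $s=1$, the lifts of $\psi\in G_v$ are exactly $F_{\psi,2\pi\ii n}$, i.e.\ $(z,w)\mapsto(\psi(z),w+nT)$. Every $\gamma\in\Gamma$ therefore has the form $(\psi,w+n_\gamma T)$, and $\gamma\mapsto n_\gamma$ is a homomorphism $\Gamma\to\Z$ (a cocycle that is constant, since $L\circ\phi$ is constant).

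Then set $H:=\overline{\pi(\Gamma)}\subset G_v$. This is a closed subgroup of $\operatorname{PSL}(2,\R)$ and is nondiscrete, so its identity component $H_0$ has positive dimension. We must exclude the cases $\dim H_0=3$ and $\dim H_0=2$, and the case where $H_0$ is elliptic.
\begin{itemize}
\item If $H_0$ is all of $\operatorname{PSL}(2,\R)$ or a Borel subgroup, then $v$ is constant on $\D$, or constant along horocycles. Invariance of $v$ under this group contradicts $\Delta_0 v>0$. I would check this by a direct computation for the Borel case. Alternatively, if $G_v$ contained a 2-dimensional group acting transitively, then $v$ would be constant, hence harmonic.
\item If $H_0$ is elliptic, it fixes a point $p$. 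Then every element of $\pi(\Gamma)$ normalizes $H_0$ and so fixes $p$, and the argument of Lemma~\ref{lem:rotational-deck} (with $d=0$) produces a closed orbit at $p$, again type IR$_1$.
\end{itemize}
So $H_0=A=\{A_t\}$ is a hyperbolic or parabolic one-parameter group, contained in $G_v$.

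\textbf{Forward direction: $\pi(\Gamma)\subset A$, abelianness, and the normal form.}
We need $\pi(\Gamma)\subset A$. The group $\pi(\Gamma)$ normalizes $A$. Using Lemma~\ref{lem:disk-automorphism-dynamics}(iii) and the normalizer structure, we argue that elements outside $A$ would be finite-order or orientation-reversing on the boundary fixed set. These must be excluded using freeness of $\Gamma$ together with the absence of periodic orbits.

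The torus structure then gives abelianness: $\Gamma$ sits densely in the 2-torus $\overline L$ of Theorem~\ref{thm:orbit-trichotomy}, so $\Gamma$ is abelian. Hence $\pi(\Gamma)$ is abelian, and Lemma~\ref{lem:disk-automorphism-dynamics}(iii) places it inside the centralizer of one nontrivial element, which is $A$.

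Now the map $\Gamma\to A\times\Z\cong\R\times\Z$, $\gamma\mapsto(t_\gamma,n_\gamma)$, is injective. Proper discontinuity forces the image to be discrete in $\R\times\Z$, while nondiscreteness of $\pi(\Gamma)$ forces the projection to $\R$ to be dense. A discrete subgroup of $\R\times\Z$ with dense $\R$-projection has rank $2$.
\begin{itemize}
\item Its intersection with $\R\times\{0\}$ is $\langle(c,0)\rangle$ with $c\neq0$. Reparametrize $A$ so that $c=1$; this gives $\gamma_1$.
\item The image in $\Z$ is $m\Z$ with $m>0$, after possibly replacing $w$ by $-w$ is \emph{not} allowed, so we choose the sign of the generator instead. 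Take an element $(\alpha,m)$; this gives $\gamma_2$.
\item Density of the $\R$-projection forces $\alpha\notin\Q$.
\end{itemize}

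\textbf{Converse direction.}
Given $\Gamma$ in the form \eqref{eq:IR0-deck-normal-form}, we check directly:
\begin{itemize}
\item \emph{Freeness:} $A_t$ has no fixed point in $\D$ for $t\neq0$, and $w$ shifts by $mkT\neq0$ when $t=0$.
\item \emph{Proper discontinuity:} this follows from discreteness in $\R\times\Z$ together with properness of the $A$-action.
\item \emph{Type:} $\pi(\Gamma)=\{A_{j+k\alpha}\}$ is dense in $A$, so the flow is nonregular by Lemma~\ref{lem:flow-type-criterion}(iii). Since $\Gamma\cap\langle\tau\rangle=\{1\}$, the type is IR$_0$ or IR$_1$. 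IR$_1$ is excluded because a closed orbit would require a point of $\D$ fixed by $\pi(\Gamma)$.
\end{itemize}

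\textbf{Main obstacle.}
The hardest step is excluding the 2- and 3-dimensional cases for $H_0$ and showing $\pi(\Gamma)\subset A$. I would first try the abelianness route via the torus $\overline L$, since it gives $\pi(\Gamma)\subset C_G(\psi)$ immediately from Lemma~\ref{lem:disk-automorphism-dynamics}(iii).
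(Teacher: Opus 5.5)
There is a genuine gap at the step you yourself flag as the crux: showing that $\Gamma$ is abelian, equivalently that $\pi(\Gamma)\subset A$. Your justification, that ``$\Gamma$ sits densely in the 2-torus $\overline L$'', is wrong. $\overline L$ is the closure of the flow in $\Isom(M_\Gamma,\bar g)$ and acts on the quotient, whereas $\Gamma$ is a group of deck transformations of $\D\times\R$; it is not a subgroup of $\overline L$ in any sense.

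Your normalizer fallback is also incomplete. For parabolic $A$, the normalizer of $A$ in $\operatorname{PSL}(2,\R)$ contains the hyperbolic maps $\zeta\mapsto c\zeta+b$ with $c\neq1$. These are neither of finite order nor reverse the boundary fixed set. Since they have no fixed points in $\D$, freeness and the absence of periodic orbits cannot exclude them; you would need discreteness of $\Gamma$, or $\Delta_0v>0$. The missing idea, which is exactly what the paper uses, is topological. $\mathcal M_K(v,s)\cong\D\times\R$ is simply connected and $\Gamma$ acts freely and properly discontinuously, so $\Gamma\cong\pi_1(M_\Gamma)\cong\pi_1(\T^2\times\R)\cong\Z^2$ by the diffeomorphism of Theorem~\ref{thm:orbit-trichotomy}.

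With $\Gamma\cong\Z^2$ in hand, your case analysis of the identity component of $\overline{\pi(\Gamma)}$ becomes unnecessary. The paper argues directly, in the gauge $s=1$, writing elements as $(\psi,n)$ with $n\in\Z$.
\begin{enumerate}
\item The homomorphism $\nu(\psi,n)=n$ has image $m\Z$ with $m>0$; otherwise $\Gamma\subset G_v\times\{0\}$ and $\pi(\Gamma)$ would be discrete.
\item Hence $\ker\nu\cong\Z$ is generated by some $(\lambda_0,0)$. By freeness, $\lambda_0$ has no fixed point in $\D$, so it is hyperbolic or parabolic.
\item Commutativity with a lift $(a,m)$ puts $a$ in $C_G(\lambda_0)=A$ by Lemma~\ref{lem:disk-automorphism-dynamics}(iii).
\item Density of $\Z+\alpha\Z$ gives $\alpha\notin\Q$, and since $G_v$ is closed, $A\subset G_v$.
\end{enumerate}

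The rest of your proposal is fine.
\begin{itemize}
\item Excluding zeros of $s$ via the closed orbit from Lemma~\ref{lem:rotational-deck} is a valid alternative to the paper's $\Z$ versus $\Z^2$ contradiction.
\item Your rank-two argument for discrete subgroups of $\R\times\Z$ with dense projection is correct.
\item Your converse is a legitimate replacement for the paper's explicit lattice picture in coordinates $\D\cong\R_u\times J$. It uses Lemma~\ref{lem:flow-type-criterion}(iii) and then the trichotomy, excluding SF and IR$_1$ because no $A_t$ with $t\neq0$ fixes a point.
\item For that converse you must note that $M_\Gamma$ is noncompact, so that Theorem~\ref{thm:orbit-trichotomy} applies. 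This holds because $[z,w]\mapsto[z]$ is a continuous surjection of $M_\Gamma$ onto $\D/A$, which is an open interval.
\end{itemize}
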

\end{screen}

\begin{proof}
Assume IR$_0$, and we prove \eqref{eq:IR0-deck-normal-form}.
\begin{enumerate}
  \item We have \(M_\Gamma\simeq\T^2\times\R\), and $\pi(\Gamma)$ is nondiscrete. Also, after a holomorphic gauge, we can obtain
  \(s=1\).\\
  \hspace*{2em} Proof :  Theorem~\ref{thm:orbit-trichotomy} gives $M_\Gamma\simeq\T^2\times\R,\ \Gamma\cong\pi_1(M_\Gamma)\cong\Z^2$.
  In IR\(_0\), \(L\) is not closed, so this is not (SF), and the flow is Palais-nonregular. Thus, Lemma~\ref{lem:flow-type-criterion}(i) gives
  \(\Gamma\cap\langle\tau\rangle=\{1\}\).
  Furthermore, part (iii) of the same lemma shows that \(\pi(\Gamma)\) is nondiscrete.
  If \(s\) had a zero, Lemmas~\ref{lem:single-zero}, \ref{lem:rotational-deck} would give
  \(\Gamma\cong\Z\), a contradiction. Thus, \(s\) is nowhere zero, and
  the gauge \(s^{-1}\) gives \(s=1\).

  \item For some \(m\in\Z_{>0}\) and \(\lambda_0,a\in G_v\), we can write $\Gamma=\langle(\lambda_0,0),(a,m)\rangle$.\\
  \hspace*{2em} Proof :  In this gauge, $\StrAut(\mathcal M(v,1))=G_v\times\Z,\ (\psi,n)(z,w)=(\psi(z),w+nT)$.
  Let \(\nu:\Gamma\to\Z\) be the second component.
  If \(\nu(\Gamma)=\{0\}\), then \(\Gamma\subset G_v\times\{0\}\), and
  since the projection \(G_v\times\{0\}\to G_v\) is a homeomorphism,
  discreteness of \(\Gamma\) implies that \(\pi(\Gamma)\) is also discrete, a contradiction.
  Thus, \(\nu(\Gamma)=m\Z\) for some \(m\in\Z_{>0}\).
  Since \(\Gamma\cong\Z^2\) and \(\nu(\Gamma)\cong\Z\), we have \(\ker\nu\cong\Z\).
  Let \((\lambda_0,0)\) be a generator of \(\ker\nu\), and let \((a,m)\) be an element whose value under \(\nu\) is \(m\).
  For every \((\psi,km)\in\Gamma\), we have
  \((\psi,km)(a,m)^{-k}\in\ker\nu=\langle(\lambda_0,0)\rangle\), which gives the above generating expression.

  \item We can choose a hyperbolic or parabolic one-parameter subgroup
  \(A=\{A_t:t\in\R\}\subset G_v\) and \(\alpha\in\R\setminus\Q\) such that
  \(\lambda_0=A_1\) and \(a=A_\alpha\).\\
  \hspace*{2em} Proof :  Since \(\Gamma\cong\Z^2\) is abelian, \(a\lambda_0=\lambda_0a\).
  If \((\lambda_0,0)\) fixed a point in the disk, it would fix a point of \(\mathcal M(v,1)\),
  contrary to freeness of the \(\Gamma\)-action. Thus, \(\lambda_0\) is hyperbolic or parabolic.
  By Lemma~\ref{lem:disk-automorphism-dynamics}(ii),(iii), the centralizer of \(\lambda_0\) is
  a one-parameter subgroup \(A\cong\R\), and \(a\in A\).
  Choosing a parameter and writing \(\lambda_0=A_1\) and \(a=A_\alpha\), we have \(\pi(\Gamma)\simeq\Z+\alpha\Z\).
  If \(\alpha\in\Q\), then $\Z+\alpha\Z\simeq\Z$ would be discrete, which is impossible because \(\pi(\Gamma)\) is nondiscrete.
  Hence \(\alpha\notin\Q\).
  Also, \(\pi(\Gamma)\) is dense in \(A\), and every element preserves \(v\), so continuity of \(v\) gives \(A\subset G_v\). These observations give \eqref{eq:IR0-deck-normal-form}.
\end{enumerate}

Assume $s=1$ and \eqref{eq:IR0-deck-normal-form}, and we prove IR$_0$.
\begin{enumerate}
  \item There are coordinates \(\D\simeq\R_u\times J\), where $J$ is an open interval, such that $A_t(u,y)=(u+t,y)$.\\
  \hspace*{2em} Proof :  Identify \(\D\) with the upper half-plane \(\mathbb H\) by the Cayley transformation.
  If $A$ is hyperbolic, by Lemma~\ref{lem:disk-automorphism-dynamics}(ii), take $A_1(\zeta)=c\zeta,\ \zeta\in\mathbb H$, and write $\zeta=r\e^{\ii\theta},\ r>0,\quad 0<\theta<\pi$.
  Then define $f:\mathbb{H}\to \R_u\times (0,\pi)$ by $f(\zeta):=(\log r/\log c, \theta)$.
  If $A$ is parabolic, by Lemma~\ref{lem:disk-automorphism-dynamics}(ii), take $A_1(\zeta)=\zeta+b,\ \zeta\in\mathbb H$, and write $\zeta=x+\ii y,\ x\in\R,\ y>0$.
  Then define $f:\mathbb{H}\to \R_u\times (0,\infty)$ by $f(\zeta):=(x/b,y)$.
  The above $f$ is a diffeomorphism and satisfies $A_t(u,y)=(u+t,y)$ by construction.

  \item We have \(M_\Gamma\simeq\T^2\times\R\).\\
  \hspace*{2em} Proof :  \(\Gamma\) acts on the \((u,w)\)-plane as the lattice group $\Z(1,0)+\Z(\alpha,mT)$ and does not act on \(J\).
  Thus, \(M_\Gamma\simeq\T^2\times J\simeq\T^2\times\R\).

  \item There are no closed characteristic orbits, and each orbit is dense in the corresponding invariant two-torus.\\
  \hspace*{2em} Proof :  In the $(u,w)$-plane, the characteristic direction is $(0,1)=-\frac{\alpha}{mT}(1,0)+\frac{1}{mT}(\alpha,mT)$ and is proportional to \((-\alpha,1)\) in lattice coordinates.
  Since \(\alpha\notin\Q\), each orbit is dense in the two-torus, and there are no closed orbits.
\end{enumerate}
\end{proof}

\begin{screen}
\begin{theorem}
\label{thm:IR1-deck-classification}
Let \(\mathcal M_K(v,s)\) be a model space and let \(\Gamma\subset\StrAut(\mathcal M_K(v,s))\) be a free discrete subgroup, and let $M_\Gamma:=\mathcal M_K(v,s)/\Gamma$.
The characteristic flow of \(M_\Gamma\) is of type IR\(_1\) if and only if, after a suitable holomorphic gauge, there exist \(d\in\Z_{\geq0}\), \(\alpha\in\R\setminus\Q\), and \(n\in\Z\) such that we can write
\begin{align}
s(z)=z^d,\qquad v=v(|z|),\\
\Gamma=\langle\gamma\rangle\cong\Z,\ 
\gamma(z,w)=\bigl(\e^{2\pi\ii\alpha}z,w+(n-d\alpha)T\bigr),\label{eq:IR1-deck-normal-form}\\
T=\frac{2\pi}{\sqrt{2K}},\  n-d\alpha\neq0.
\end{align}
\end{theorem}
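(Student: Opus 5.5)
The proof splits into the two implications, and the main tools are Lemmas~\ref{lem:flow-type-criterion}, \ref{lem:single-zero} and~\ref{lem:rotational-deck}.

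\textbf{IR$_1$ implies the normal form.} Assume the characteristic flow of $M_\Gamma$ is of type IR$_1$. By Theorem~\ref{thm:orbit-trichotomy}, $L$ is not closed, so the flow is Palais-nonregular but not (SF). Lemma~\ref{lem:flow-type-criterion}(i) then gives $\Gamma\cap\langle\tau\rangle=\{1\}$, and part~(iii) shows that $\pi(\Gamma)$ is nondiscrete. We separate two cases according to whether $s$ has a zero.

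If $s^{-1}(0)\neq\varnothing$, Lemma~\ref{lem:single-zero} puts us, after a holomorphic gauge, in the situation $s=z^d$, $v=v(|z|)$ with $d\geq1$, and $\pi(\Gamma)$ consists of rotations. Lemma~\ref{lem:rotational-deck} then yields the generator \eqref{eq:IR1-deck-normal-form}.

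If $s$ has no zero, we apply the gauge $h=1/s$ to get $s=1$, which is the case $d=0$. We must then show that $\pi(\Gamma)$ is a rotation group. Since $M_\Gamma\simeq S^1\times\R^2$, we have $\Gamma\cong\pi_1(M_\Gamma)\cong\Z$. By Lemma~\ref{lem:disk-automorphism-dynamics}, a nondiscrete cyclic subgroup $\langle\psi\rangle$ forces $\psi$ to be elliptic with irrational angle, because hyperbolic and parabolic elements generate discrete groups. We conjugate the fixed point to $0$ by $F_\chi$ from Lemma~\ref{cor:holomorphic-gauge}. Then $\pi(\Gamma)$ is dense in $S^1$ and preserves $\mathfrak u=v$, so $v$ is radial. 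Lemma~\ref{lem:rotational-deck}, applied with $d=0$, now gives the normal form.

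\textbf{The normal form implies IR$_1$.} Assume the normal form \eqref{eq:IR1-deck-normal-form}. The map $\delta(\gamma^k)=k(n-d\alpha)T\neq0$ shows that $\Gamma$ acts freely and properly discontinuously on $\{0\}\times\R$. On the rest of the space, freeness holds because $\gamma^k$ moves the $w$-coordinate by $k(n-d\alpha)T\neq0$, while proper discontinuity follows because rotations preserve $|z|$, so only the $w$-translation matters. The second half of Lemma~\ref{lem:rotational-deck} then gives $M_\Gamma\simeq S^1\times\R^2$, with $z=0$ the unique closed orbit and every other orbit dense in a torus $|z|=\mathrm{const}$. The group $\pi(\Gamma)=\langle R_\alpha\rangle$ is nondiscrete since $\alpha\notin\Q$. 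Lemma~\ref{lem:flow-type-criterion}(iii) therefore makes the flow Palais-nonregular and not (SF). Among the remaining types in Theorem~\ref{thm:orbit-trichotomy}, IR$_0$ has no periodic orbit, so exactly one periodic orbit forces type IR$_1$.

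\textbf{Main obstacle.} The delicate step is the case where $s$ has no zero. There we must argue that $\pi(\Gamma)$ is elliptic, using the topology $\Gamma\cong\Z$ together with the classification of disk automorphisms, rather than invoking Lemma~\ref{lem:single-zero}, which is unavailable without a zero of $s$. We must also check that the gauge conjugation keeps $v$ smooth and radial.
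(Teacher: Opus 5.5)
Your proposal is correct and follows the paper's proof essentially step for step: nondiscreteness of $\pi(\Gamma)$ via Lemma~\ref{lem:flow-type-criterion}, the split according to whether $s$ has a zero, the elliptic-irrational argument for the cyclic group $\pi(\Gamma)$ when $s$ has no zero, and the nonzero $w$-translation argument for freeness and proper discontinuity in the converse. The only differences are cosmetic: in the zero-free case you invoke Lemma~\ref{lem:rotational-deck} with $d=0$ instead of reading off $n\neq0$ directly from freeness at $z=0$, and in the converse you use Lemma~\ref{lem:flow-type-criterion}(iii) and the trichotomy to show explicitly that the quotient is IR$_1$ rather than (SF) or IR$_0$, a step the paper leaves implicit.
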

\end{screen}

\begin{proof}
Assuming IR$_1$, we prove \eqref{eq:IR1-deck-normal-form}.
\begin{enumerate}
  \item We have $M_\Gamma\simeq S^1\times\R^2$, and \(\pi(\Gamma)\) is nondiscrete.\\
  \hspace*{2em} Proof :  Theorem~\ref{thm:orbit-trichotomy} gives $M_\Gamma\simeq S^1\times\R^2$.
  Furthermore, since $\Gamma$ is a Deck group, $\Gamma\cong\Z$.
  In IR\(_1\), \(L\) is not closed, so this is not the Seifert fiber case, and the flow is Palais-nonregular. Thus, Lemma~\ref{lem:flow-type-criterion}(i) gives $\Gamma\cap\langle\tau\rangle=\{1\}$.
  Lemma~\ref{lem:flow-type-criterion} (iii) shows that \(\pi(\Gamma)\) is nondiscrete.

  \item If \(s\) has a zero, Lemmas~\ref{lem:single-zero}, \ref{lem:rotational-deck} give the case \(d\geq1\) of \eqref{eq:IR1-deck-normal-form}.

  \item If \(s\) is nowhere zero, we obtain the case $d=0$ of \eqref{eq:IR1-deck-normal-form}.\\
  \hspace*{2em} Proof :  Use a gauge to set \(s=1\), and write a generator of \(\Gamma\) as \((a,n)\in G_v\times\Z\).
  The group \(\langle a\rangle=\pi(\Gamma)\) is nondiscrete.
  If \(a\) had no fixed point in \(\D\), then Lemma~\(\ref{lem:disk-automorphism-dynamics}\)(ii) would imply that \(\langle a\rangle\) is discrete, contrary to the assumption.
  Thus, \(a\) has a fixed point in \(\D\).
  By part (i) of the same lemma, \(a\) is conjugate to a rotation.
  If its rotation angle were rational, \(\langle a\rangle\) would be finite and hence discrete, so the rotation angle must be irrational.
  Thus, \(a\) is conjugate to an irrational rotation.
  Moving its fixed point to \(0\), we have \(a(z)=\e^{2\pi\ii\alpha}z\) and \(\alpha\notin\Q\).
  The equality \(v\circ a=v\) and density of the powers of the rotation imply that \(v\) is radial.
  Freeness at \(z=0\) gives \(n\neq0\).
  Thus, we obtain the case $d=0$ of \eqref{eq:IR1-deck-normal-form}.
\end{enumerate}
Conversely, assume the data in \eqref{eq:IR1-deck-normal-form}.
For $k\neq0$, we can see $\gamma^k$ has no fixed point, thus, the action of $\Gamma=\langle\gamma\rangle$ is free.
Let $Q\subset\D\times\R$ be a nonempty compact set.
If $\gamma^k(Q)\cap Q\neq\varnothing$, then we have $|k(n-d\alpha)T|\leq\max_Q w-\min_Q w$.
Only finitely many integers $k$ satisfy this inequality, so the action is properly discontinuous.
Since $\alpha\notin\Q$, the projection $\pi(\Gamma)$ is nondiscrete in the rotation group.
Lemma~\ref{lem:rotational-deck} now gives a quotient of type IR\(_1\).
\end{proof}

\end{document}